\documentclass[10pt]{amsart}

\usepackage[latin2]{inputenc}
\usepackage{amsmath}
\usepackage{graphicx}
\usepackage{amssymb}
\usepackage{esint}
\usepackage[dvipsnames]{xcolor}
\usepackage{tikz}
\usepackage{xxcolor}
\usepackage{floatrow}
\usepackage{color}
\usepackage{amsthm}
\usepackage{epsfig}
\usepackage[english]{babel}
\usepackage{hyperref}

\newtheorem{theorem}{Theorem}
\newtheorem{assumption}[theorem]{Assumption}
\newtheorem{proposition}[theorem]{Proposition}
\newtheorem{lemma}[theorem]{Lemma}
\newtheorem{corollary}[theorem]{Corollary}
\newtheorem{definition}[theorem]{Definition}
\newtheorem{remark}[theorem]{Remark}

\newtheorem{notation}[theorem]{Notation}

\newtheorem*{theorem*}{Theorem}

\def\Xint#1{\mathchoice
	{\XXint\displaystyle\textstyle{#1}}%
	{\XXint\textstyle\scriptstyle{#1}}%
	{\XXint\scriptstyle\scriptscriptstyle{#1}}%
	{\XXint\scriptscriptstyle\scriptscriptstyle{#1}}%
	\!\int}
\def\XXint#1#2#3{{\setbox0=\hbox{$#1{#2#3}{\int}$ }
		\vcenter{\hbox{$#2#3$ }}\kern-.6\wd0}}

\def\dashint{\Xint-}

\allowdisplaybreaks[2]

\newcommand{\Id}{\operatorname{Id}} 
\newcommand{\supp}{\operatorname{supp}}

\newcommand{\dist}{\operatorname{dist}}

\newcommand{\RVE}{{\operatorname{RVE}}}
\newcommand{\SQS}{{\operatorname{sel-RVE}}}

\newcommand{\Var}{{\operatorname{Var}~}}
\newcommand{\Cov}{\operatorname{Cov}}

\newcommand{\ol}{\overline}
\newcommand{\wh}{\widehat}
\newcommand{\wt}{\widetilde}
\newcommand{\coleq}{\mathrel{\mathop:}=}

\newcommand{\Huloc}{H^1_{\operatorname{uloc}}(\mathbb{R}^3)}
\newcommand{\Luloc}{L^2_{\operatorname{uloc}}(\mathbb{R}^3)}

\definecolor{darkred}{rgb}{0.7, 0.0, 0.0}
\definecolor{darkblue}{rgb}{0.0, 0.0, 0.7}
\definecolor{Yellow}{rgb}{0.95,0.9,0.0} 
\definecolor{Red}{rgb}{0.8,0.1,0.1}
\definecolor{Green}{rgb}{0.1,0.65,0.2}
\definecolor{Blue}{rgb}{0.1,0.1,0.8}
\definecolor{Purple}{rgb}{0.7,0.1,0.7}
\definecolor{Grey}{rgb}{0.6,0.6,0.6}

\begin{document}

\title[Variance reduction in the Thomas--Fermi--von Weizs\"acker energy]{Variance reduction for effective energies\\of random lattices in the Thomas--\\Fermi--von~Weizs\"acker model}

\author{Julian Fischer}
\address{Institute of Science and Technology Austria (IST Austria),
Am Campus 1, 3400 Klosterneuburg, Austria, E-Mail:
julian.fischer@ist.ac.at}

\author{Michael Kniely}
\address{Institute of Science and Technology Austria (IST Austria),
Am Campus 1, 3400 Klosterneuburg, Austria, E-Mail:
michael.kniely@ist.ac.at}
	
\begin{abstract}
In the computation of the material properties of random alloys, the method of ``special quasirandom structures'' attempts to approximate the properties of the alloy on a finite volume with higher accuracy by replicating certain statistics of the random atomic lattice in the finite volume as accurately as possible.
In the present work, we provide a rigorous justification for a variant of this method in the framework of the Thomas--Fermi--von~Weizs\"acker (TFW) model.
Our approach is based on a recent analysis of a related variance reduction method in stochastic homogenization of linear elliptic PDEs and the locality properties of the TFW model. Concerning the latter, we extend an exponential locality result by Nazar and Ortner to include point charges, a result that may be of independent interest.
\end{abstract}
	
\keywords{random material, Thomas--Fermi--von~Weizs\"acker model, variance reduction, density functional theory}
	
	
\maketitle

\section{Introduction}
\label{secintro}

In material science, direct simulations based on density functional theory \cite{HohenbergKohn,KohnSham,ParrYang} are currently limited to hundreds to thousands of atoms and therefore to material samples just about one order of magnitude larger than the atomic length scale (see e.\,g.\ \cite{DislocationCore}).
Multiscale approaches -- employed for example in the simulation of dislocations 
\cite{FagoHayesCarterOrtiz,LuTadmorKaxiras,
SuryanarayanaBhattacharyaOrtiz} -- rely on an extrapolation of the elastic properties of the material from such microscopic samples to larger scales, a concept also known in the context of continuum mechanics as ``method of representative volumes''.
While for materials with a periodic lattice the computational problem on the atomic scale may often be simplified to a problem on a single periodicity cell \cite{BlancLeBrisLions,FagoHayesCarterOrtiz,LuTadmorKaxiras,DislocationCore,
SuryanarayanaBhattacharyaOrtiz}, 
such a simplification is no longer possible for materials with random atomic lattices like random alloys (see Figure~\ref{FigureRandomLattice} for an illustration). As a consequence, for random alloys the atomic-scale samples must be chosen significantly larger, giving rise to a computationally costly problem.

For the computation of the effective properties of random alloys,
an approach called ``special quasirandom structures'' (SQS) has been proposed by Zunger et al.\ \cite{ZWFB90} to increase the accuracy of DFT computations without increasing computational effort.
The key idea of the method of special quasirandom structures is to construct a periodic configuration of atoms with finite but large periodicity cell (``superlattice'')
which reflects certain statistical properties of the random atomic lattice particularly well -- like the proportion of the atomic species, the proportion of nearest-neighbor contacts of the various atomic species, and so on (see Figure~\ref{FigureSQS} for an illustration). Further developments and applications of this method of ``special quasirandom structures'' may be found in \cite{vPDFN10,WFBZ90}. Related approaches have been employed in the context of homogenization in continuum mechanics \cite{BalzaniBrandsSchroederCarstensen2010,BalzaniSchroeder2009,Schroeder2011}.

Inspired by the method of special quasirandom structures, in the continuum mechanical context of homogenization of random materials a selection approach for representative volumes has been proposed by Le~Bris, Legoll, and Minvielle \cite{LLM16}: This selection approach proceeds by considering a large number of microscopic samples of the random material and selecting the sample that is ``most representative'' for the material as measured by certain statistical quantities, like for example the volume fraction in the case of a two-material composite. The effective material properties are then approximated by numerically evaluating the cell formula provided by homogenization theory on the selected sample. In the context of stochastic homogenization of linear elliptic PDEs $-\nabla \cdot (a_\varepsilon\nabla u)=f$, for the computation of the effective (homogenized) coefficient the selection approach has been shown to yield an increase in accuracy of up to one order of magnitude in a numerical example with ellipticity ratio $5$ \cite{LLM16}, while requiring negligible computational effort.
Recently, a rigorous mathematical analysis of the selection approach by Le~Bris, Legoll, and Minvielle in the context of homogenization of linear elliptic PDEs has been provided by the first author \cite{FischerVarianceReduction}.

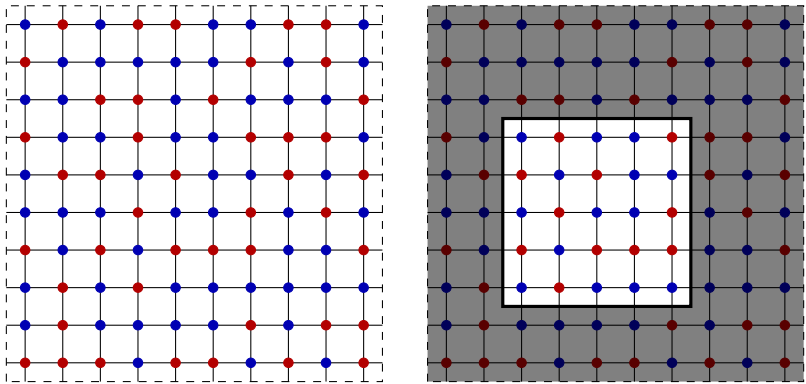
\begin{figure}
\begin{center}
\begin{tikzpicture}[scale=.5]
\draw[dashed] (-.5,-.5) rectangle (9.5,9.5);
\foreach \x in {0,...,9}
\draw (\x,-.5) -- (\x,9.5);
\foreach \y in {0,...,9}
\draw (-.5,\y) -- (9.5,\y);

\foreach \x/\y in {0/0, 0/3, 1/0, 1/1, 1/2, 2/0, 2/3, 3/2, 3/4, 4/0, 4/3, 
	5/0, 5/3, 6/1, 6/3, 6/4, 7/0, 8/1, 8/4, 9/0, 9/1, 9/3, 
	0/6, 0/8, 1/5, 1/9, 2/5, 2/7, 3/6, 3/7, 3/9, 4/5, 4/9, 
	5/7, 6/6, 6/8, 7/5, 7/6, 7/9, 8/6, 8/8, 8/9, 9/5, 9/7}
\draw[fill=darkred,color=darkred] (\x,\y) circle (.125);

\foreach \x/\y in {0/1, 0/2, 0/4, 1/3, 1/4, 2/1, 2/2, 2/4, 3/0, 3/1, 3/3, 4/1, 4/2, 4/4, 
	5/1, 5/2, 5/4, 6/0, 6/2, 7/1, 7/2, 7/3, 7/4, 8/0, 8/2, 8/3, 9/2, 9/4, 
	0/5, 0/7, 0/9, 1/6, 1/7, 1/8, 2/6, 2/8, 2/9, 3/5, 3/8, 4/6, 4/7, 4/8, 
	5/5, 5/6, 5/8, 5/9, 6/5, 6/7, 6/9, 7/7, 7/8, 8/5, 8/7, 9/6, 9/8, 9/9}
\draw[fill=darkblue,color=darkblue] (\x,\y) circle (.125);
\end{tikzpicture}
~~~
\begin{tikzpicture}[scale=.5]
\draw[dashed] (-.5,-.5) rectangle (9.5,9.5);
\foreach \x in {0,...,9}
\draw (\x,-.5) -- (\x,9.5);
\foreach \y in {0,...,9}
\draw (-.5,\y) -- (9.5,\y);
\draw[very thick] (1.5,1.5) -- (6.5,1.5) -- (6.5,6.5) -- (1.5,6.5) -- cycle;

\foreach \x/\y in {0/0, 0/3, 1/0, 1/1, 1/2, 2/0, 2/3, 3/2, 3/4, 4/0, 4/3, 
	5/0, 5/3, 6/1, 6/3, 6/4, 7/0, 8/1, 8/4, 9/0, 9/1, 9/3, 
	0/6, 0/8, 1/5, 1/9, 2/5, 2/7, 3/6, 3/7, 3/9, 4/5, 4/9, 
	5/7, 6/6, 6/8, 7/5, 7/6, 7/9, 8/6, 8/8, 8/9, 9/5, 9/7}
\draw[fill=darkred,color=darkred] (\x,\y) circle (.125);

\foreach \x/\y in {0/1, 0/2, 0/4, 1/3, 1/4, 2/1, 2/2, 2/4, 3/0, 3/1, 3/3, 4/1, 4/2, 4/4, 
	5/1, 5/2, 5/4, 6/0, 6/2, 7/1, 7/2, 7/3, 7/4, 8/0, 8/2, 8/3, 9/2, 9/4, 
	0/5, 0/7, 0/9, 1/6, 1/7, 1/8, 2/6, 2/8, 2/9, 3/5, 3/8, 4/6, 4/7, 4/8, 
	5/5, 5/6, 5/8, 5/9, 6/5, 6/7, 6/9, 7/7, 7/8, 8/5, 8/7, 9/6, 9/8, 9/9}
\draw[fill=darkblue,color=darkblue] (\x,\y) circle (.125);

\fill[opacity=.5] (1.5,-.5) -- (9.5,-.5) -- (9.5,9.5) -- (-.5,9.5) -- (-.5,-.5) -- (1.5,-.5) -- (1.5,6.5) -- (6.5,6.5) -- (6.5,1.5) -- (1.5,1.5) -- (1.5,-.5);
\end{tikzpicture}
\caption{A simple example of a random atomic lattice, the different atomic species being indicated by the colors red and blue (left).
An illustration of the method of representative volumes (right): For ab initio computations of material properties, a sample of microscopic extent must be chosen.
\label{FigureRandomLattice}}
\end{center}
\end{figure}

The main goal of the present paper is to show that the selection approach of Le~Bris, Legoll, and Minvielle \cite{LLM16}
-- which is conceptually closely related to the method of special quasirandom structures of Zunger et al.\ \cite{ZWFB90} -- also allows for an increase of accuracy in the computation of the effective elastic properties of random atomic lattices in the context of orbital-free density functional theory (orbital-free DFT). More precisely, we neglect exchange-correlation energy and consider the approximation of effective energies of random atomic lattices in the framework of the Thomas--Fermi--von Weizs\"acker (TFW) model. In the TFW model, for a given nuclear charge distribution $m$ the associated electronic density $\rho$ of the ground state is determined by minimizing the TFW energy
\begin{align*}
\int C_W |\nabla \sqrt{\rho}|^2 + C_{TF} \rho^{5/3} + \frac{1}{2} (m-\rho) \phi \,dx
\end{align*}
with the electric potential $\phi$ being subject to the Poisson equation
\begin{align*}
-\Delta \phi = 4\pi (m-\rho).
\end{align*}
By rescaling, we may henceforth assume that $C_W=1$ and $C_{TF}=1$.
Recall that it is convenient to reformulate the TFW model in terms of the square root of the electronic density $u:=\sqrt{\rho}$. With this notation, the Euler-Lagrange equation for the TFW model reads
\begin{subequations}
\label{eqtfw}
\begin{align}
&-\Delta u + \frac53 u^\frac73 - \phi u = 0,
\\
&-\Delta \phi = 4\pi (m - u^2).
\end{align}
\end{subequations}
In orbital-free DFT, further contributions accounting for exchange and correlation energy are typically added to the TFW energy (and, corrrespondingly, to the Euler-Lagrange equation). In the present work, we shall neglect those terms.
We will also assume that the positions of the nuclei are given a priori.
While in a more realistic model the positions of the nuclei would be determined by energetic relaxation, the question of crystallization in variational models of interacting atoms is a challenging topic on its own, with positive answers currently restricted to rather elementary (mostly non-quantum mechanical) models; see e.\,g.\ the review \cite{BlancLewin}. For this reason, we restrict ourselves to the aforementioned setting of fixed nuclei positions.
For an overview of the mathematical theory of the TFW model, we refer to \cite{CLBL98} and the references therein.

Recall that in the framework of hyperelasticity the deformation of an elastic body is determined by minimization of the total (elastic and potential) energy.
In the context of an atomic lattice, the elastic energy is given as the overall energy of electrons and nuclei.
In a multiscale approximation, the macroscopic deformation of the elastic body is approximated on the atomic length scale by affine deformations. In many cases, for a macroscopically affine deformation the state of minimal energy of the atomic lattice is given by an approximately affinely deformed atomic lattice (a principle known as the Cauchy-Born rule, see e.\,g.\ \cite{ContiDolzmannKirchheimMueller,FrieseckeTheil}).
The associated effective (homogenized) elastic energy density is then given by the thermodynamic limit (i.\,e.\ the ``infinite-volume average'') of the energy of the affinely deformed atomic lattice.
In other words, in the context of the TFW approximation the effective elastic energy density is given as the thermodynamic limit of the TFW energy, i.\,e.\ -- up to subtracting the self-energy of point charges -- by the quantity
\begin{align}
\label{DefinitionEeff}
E_{\infty} := \lim_{R\rightarrow \infty} \dashint_{[-R,R]^d} |\nabla \sqrt{\rho}|^2 + \rho^{5/3} + \frac{1}{2} (m-\rho) \phi \,dx,
\end{align}
where the nuclear charge distribution $m$ has been subjected to an appropriate affine change of variables to account for the affine deformation of the lattice.
Note that the almost-sure existence of this thermodynamic limit has been established for certain random lattices in
\cite{BlancLeBrisLionsStochasticLattice}; see also \cite{BlancLeBrisLionsAtomisticToContinuum,CLBL98} for an overview and related questions. Under the assumptions (A0)-(A3), the almost sure existence of the limit \eqref{DefinitionEeff} could also be shown by an argument similar to our proof of Theorem~\ref{theoremselection}.

In practical computations of the effective energy \eqref{DefinitionEeff}, the infinite-volume average in \eqref{DefinitionEeff} must be replaced by an average over a finite volume, say, a box of the form $[0,L]^d$, an approach also known in the context of continuum mechanics as the \emph{method of representative volumes}. Note that in this setting one must specify appropriate boundary conditions for $\rho$ on $\partial [0,L]^d$.
We shall denote the resulting finite-volume approximation for $E_\infty$ by $E^\RVE_L$.

As boundary layer effects may negatively impact the rate of convergence (in the length $L$) of the representative volume approximations $E^\RVE_L$ towards the thermodynamic limit $E_{\infty}$ (see for instance \cite{FischerVarianceReduction} for a brief discussion of the analogous problem in the context of periodic homogenization of elliptic PDEs), it is desirable to work with periodic representative volumes. In the context of nuclear charge distributions $m$ arising from random lattices, this requires the existence of a \emph{periodization} of the probability distribution of the nuclear charges $m$, that is an $L$-periodic variant $\tilde m$ of the probability distribution of $m$ (see for instance Figure~\ref{FigureSQS} for an illustration). Note that care must be taken to align the definition of the representative volume with a possible underlying periodic structure.
For a more precise explanation of this notion of periodization, see the discussion preceding conditions (A3$_a$)-(A3$_c$) below.
From now on and for the rest of the paper, we will assume that the representative volume approximation $E^\RVE_L$ for the effective energy density $E_\infty$ has been obtained by evaluating the averaged TFW energy (see \eqref{TFWEnergy} below) on such a periodic representative volume.

Our main result --  Theorem~\ref{theoremselection} -- states that the selection approach for representative volumes of Le~Bris, Legoll, and Minvielle \cite{LLM16} increases the accuracy of approximations $E^\RVE_L$, at least for a wide class of random nuclear charge distributions: Instead of choosing a representative volume (that is, an $L$-periodic nuclear charge distribution) uniformly at random from the (periodized) probability distribution, it is better to preselect the representative volume to be ``particularly representative'' for the random alloy in terms of certain basic statistical quantities like the proportion of different types of atoms in the representative volume, the proportion of nearest-neighbor contacts of certain types in the representative volume, and so on. We denote the resulting approximation for the effective energy density by $E^\SQS_L$. In Theorem~\ref{theoremselection} we show that the approximation $E^\SQS_L$ is typically more accurate than the approximation $E^\RVE_L$.
From a mathematical viewpoint, the interest in our main result is twofold:
\begin{itemize}
\item It provides a rigorous justification of the method of ``special quasirandom structures'' in a quantum mechanical model, the setting in which these methods were first developed \cite{ZWFB90}.
\item It provides a first example of a nonlinear PDE for which the selection approach for representative volumes of Le~Bris, Legoll, and Minvielle \cite{LLM16} can be proven to be successful.
\end{itemize}

\begin{figure}
\begin{center}
\begin{tikzpicture}[scale=.4]
\draw[dashed] (0.5,0.5) rectangle (13.5,13.5);
\clip(0.5,0.5) rectangle (13.5,13.5);

\foreach \x in {0,...,14}
\draw (\x,-.5) -- (\x,14.5);

\foreach \y in {0,...,14}
\draw (-.5,\y) -- (14.5,\y);

\foreach \y in {4.5,9.5}
\draw[very thick] (-.5,\y) -- (14.5,\y);

\foreach \x in {4.5,9.5}
\draw[very thick] (\x,-.5) -- (\x,14.5);

\foreach \k/\l in {0/0, 0/5, 0/10, 5/0, 5/5, 5/10, 10/0, 10/5, 10/10}
{
\foreach \x/\y in {\k/\l, \k/2+\l, 1+\k/1+\l, 1+\k/3+\l, 1+\k/4+\l, 2+\k/1+\l, 
				   2+\k/3+\l, 3+\k/2+\l, 3+\k/4+\l, 4+\k/\l, 4+\k/3+\l}
\draw[fill=darkred,color=darkred] (\x,\y) circle (.125);

\foreach \x/\y in {\k/1+\l, 1+\k/\l, 1+\k/2+\l, 2+\k/\l, 2+\k/2+\l, 
				   2+\k/4+\l, 3+\k/\l, 3+\k/1+\l, \k/3+\l, \k/4+\l, 
				   3+\k/3+\l, 4+\k/1+\l, 4+\k/2+\l, 4+\k/4+\l}
\draw[fill=darkblue,color=darkblue] (\x,\y) circle (.125);
}
\end{tikzpicture}
\caption{An illustration of the method of special quasirandom structures: An $L$-periodic ``superlattice'' (with $L\gg 1$) is built to reflect the statistical properties of the random material particularly well -- like the percentage of atoms of the two species, the statistics of nearest-neighbor configurations, the statistics of configurations of three neighboring atoms, and so on. \label{FigureSQS}}
\end{center}
\end{figure}
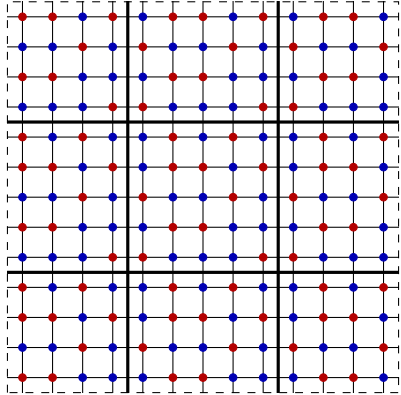

Let us briefly comment on the mechanism for the gain in accuracy achieved by the method of special quasirandom structures. The leading-order contribution to the error in the method of representative volumes consists in fact of fluctuations, while in expectation the method of representative volumes is accurate to much higher order. In fact, in the case of the TFW model the systematic error of the method of representative volumes decays even exponentially in the size of the representative volume
\begin{align*}
\big|\mathbb{E}[E^\RVE_L]-E_\infty\big| \leq C \exp(-c L).
\end{align*}
At the same time, the fluctuations display only CLT scaling behavior
\begin{align*}
\big|E^\RVE_L-\mathbb{E}[E^\RVE_L]\big| \sim L^{-d/2},
\end{align*}
that is they behave like the fluctuations of the average of $L^d$ i.\,i.\,d.\ random variables.
Thus, a variance reduction method -- a method to reduce the fluctuations of the approximations $E^\RVE_L$ while mostly preserving the expected value $\mathbb{E}[E^\RVE_L]$ -- is expected to lead to an increase in accuracy.

The selection of ``particularly representative'' material samples may be viewed as such a \emph{variance reduction method}: In fact, we shall prove that the joint probability distribution of the effective energy $E^\RVE_L$ and statistical quantities like the percentage of atoms of a certain species in the representative volume (and/or quantities like the percentage of nearest-neighbor configurations of two given atomic species, etc.) is close to a multivariate Gaussian. Conditioning on the event that the auxiliary statistical quantity -- which we shall denote by $\mathcal{F}$ -- is close to its expected value then reduces the variance of the computed energies $E^\RVE_L$, provided that $E^\RVE_L$ and the auxiliary quantity are nontrivially correlated (see Figure~\ref{FigureVarianceReduction}). At the same time, the expected value $\mathbb{E}[E^\RVE_L]$ is not changed much by selecting only representative volumes subject to the condition that $\mathcal{F}$ is close to its expected value.

The main challenge in the proof is the derivation of the quantitative multivariate normal approximation result for the joint probability distribution of the energy $E^\RVE_L$ and the statistical quantities $\mathcal{F}$ of the representative volume. Just like in \cite{FischerVarianceReduction}, we make crucial use of the locality properties of these quantities of interest, which allow for a quantitative (multivariate) normal approximation. In \cite{FischerVarianceReduction}, in the context of the homogenization of the linear elliptic PDE $-\nabla \cdot (a_\varepsilon\nabla u)=f$, a localization result for the effective energies
\begin{align*}
a^\RVE_L \xi \cdot \xi := \inf_{v\in H^1_\mathrm{per}([0,L]^d)} \dashint_{[0,L]^d} a(\omega,x)(\xi+\nabla v)\cdot (\xi+\nabla v) \,dx
\end{align*}
has been established: In \cite{FischerVarianceReduction}, the contribution of terms with dependency range $\sim \ell$ to the overall energy $a^\RVE_L \xi \cdot \xi$ is seen to be essentially of the order $\ell^{-d}$, which is essentially twice the order of the fluctuation scaling $\ell^{-d/2}$. By means of a ``multilevel local dependency structure'' \cite{FischerMultilevelLocalDependence}, this allowed for the derivation of a quantitative multivariate normal approximation result for the joint probability distribution of the representative volume approximation $a^\RVE_L$ of the effective coefficient and auxiliary statistical quantities like the averaged coefficient $\mathcal{F}:=\dashint_{[0,L]^d} a \,dx$ \cite{FischerVarianceReduction}.

Due to the strong -- exponential -- localization properties of the TFW model (see \cite{NO17} for the case without point charges and Theorem~\ref{theorempartialijpsi} below for the general case), we in principle would not even need to appeal to the ``multilevel local dependence structure'' introduced in \cite{FischerVarianceReduction,FischerMultilevelLocalDependence}, but could directly work with a multivariate central limit theorem with a standard local dependence structure. However, it will be convenient for us to employ the abstract variance reduction result of Lemma~\ref{lemmavariancereduction}, which is established in \cite{FischerVarianceReduction,FischerMultilevelLocalDependence}.

\begin{figure}
\begin{center}
\includegraphics[scale=0.1]{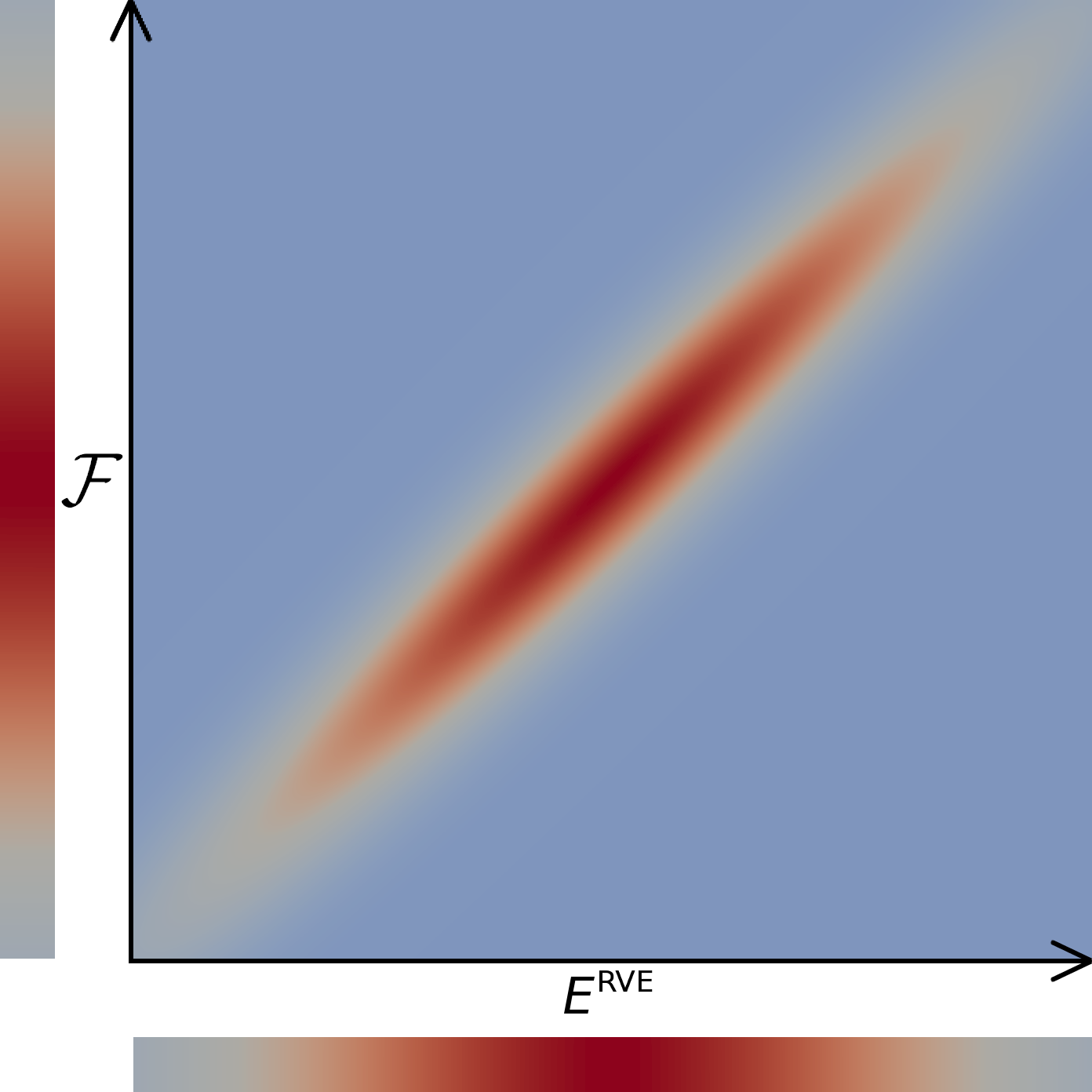}~~~~~
\includegraphics[scale=0.1]{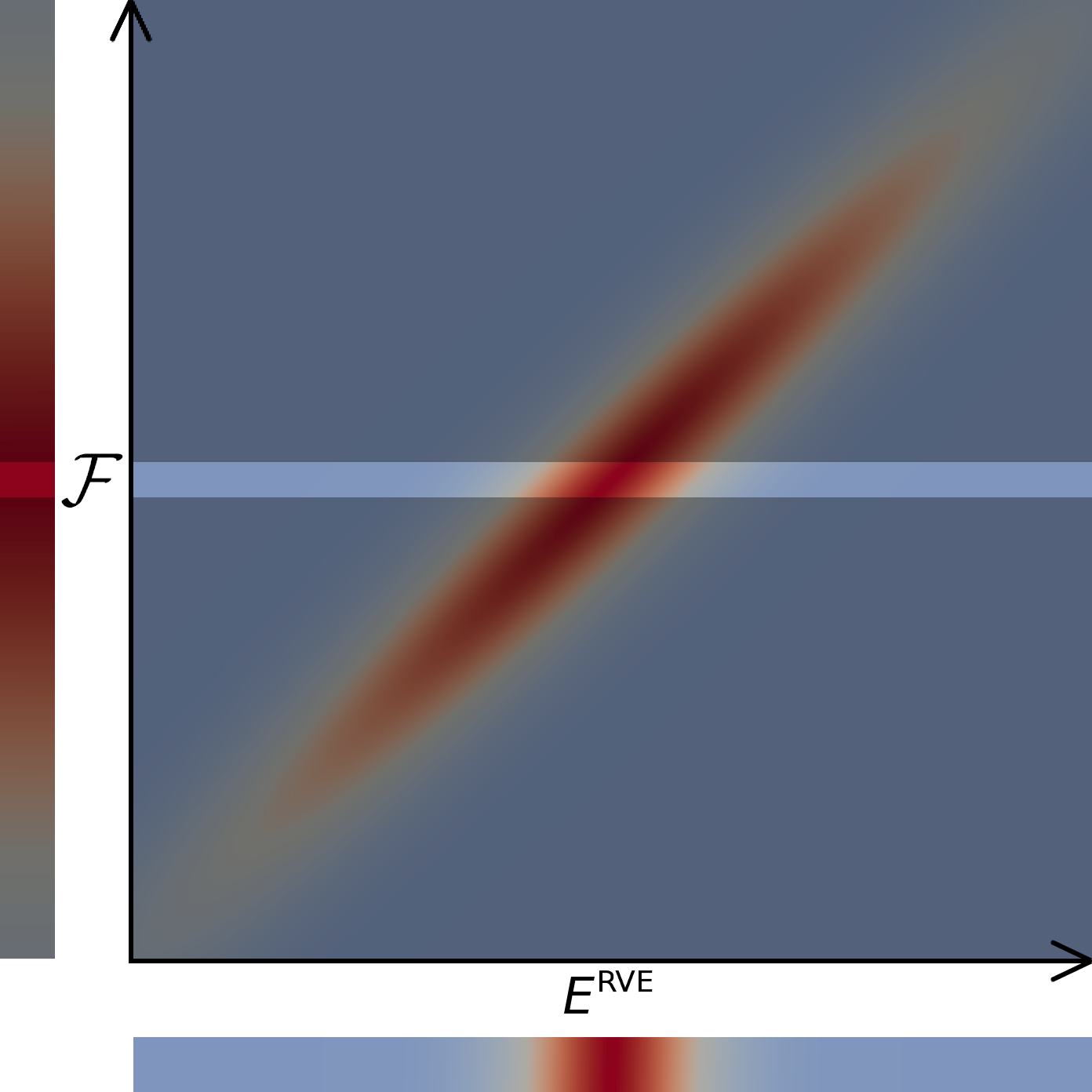}
\caption{The joint probability distribution of the approximations for the effective energy $E^\RVE_L$ and auxiliary statistical quantities $\mathcal{F}$ like the percentage of atoms of a certain species is close to a multivariate Gaussian (left). Conditioning on the auxiliary statistical quantity $\mathcal{F}$ being close to its expected value then reduces the variance of $E^\RVE_L$, provided that the two random variables are nontrivially correlated (right).\label{FigureVarianceReduction}}
\end{center}
\end{figure}

{\bf Notation.}
We use standard notation for Sobolev spaces: By $W^{1,p}(\Omega)$ we denote the space of functions $v\in L^p(\Omega)$ whose distributional derivative $\nabla v$ also belongs to $L^p(\Omega)$, along with the usual norm $||v||_{W^{1,p}(\Omega)}^p = \int_{\Omega}  |v|^p +|\nabla v|^p \,dx$. As usual, we use the abbreviation $H^1(\Omega):=W^{1,2}(\Omega)$. Given $L\geq 1$, by $H^1_\mathrm{per}([0,L]^d)$ we denote the space of $L$-periodic Sobolev functions $v\in H^1([0,L]^d)$.

By $H^1_\mathrm{uloc}(\mathbb{R}^d)$ we denote the space of functions $v:\mathbb{R}^d\rightarrow\mathbb{R}$ whose restrictions $v|_{B_1(x)}$ belong to $H^1(B_1(x))$ for all $x\in \mathbb{R}^d$, with a uniform bound on the local Sobolev norm $||v||_{H^1_{\mathrm{uloc}}(\mathbb{R}^d)}^2 := \sup_{x\in \mathbb{R}^d} \int_{B_1(x)} |v|^2 + |\nabla v|^2 \,dx <\infty$. Similarly, by $L^2_\mathrm{uloc}(\mathbb{R}^d)$ we denote the space of measurable functions $v:\mathbb{R}^d\rightarrow\mathbb{R}$ with finite norm $||v||_{L^2_{\mathrm{uloc}}(\mathbb{R}^d)}^2 := \sup_{x\in \mathbb{R}^d} \int_{B_1(x)} |v|^2 \,dx <\infty$.

By $B_r(x)$ we denote the ball of radius $r$ around $x\in \mathbb{R}^d$. We also use the shorthand notation $B_r:=B_r(0)$.
By $C$ we will denote a generic constant depending only on quantities like $\rho$, $M$, and $\omega_0$ (see the assumptions (A1) and (A3) below), whose precise value may vary from occurrence to occurrence.

For a set $M$, we denote by $\sharp M$ the number of its elements.

For two vector-valued random variables $X$ and $Y$, we denote the covariance matrix as usual by $\Cov[X,Y]$. We also use the notation $\Var X$ as a shorthand notation for $\Cov[X,X]$.

\section{Main Results}
\label{secresults}

In this article, we prove that the selection approach for representative volumes of Le Bris, Legoll, and Minvielle \cite{LLM16} leads to an increase in accuracy when calculating effective energies for random lattices in the context of the Thomas--Fermi--von Weizs\"acker model \eqref{eqtfw}, at least for a wide class of random nuclear charge distributions. For a precise statement of our assumptions and our main result, see (A0)--(A3) and Theorem~\ref{theoremselection} below.

Under more general conditions, we establish an exponential locality result for the TFW model, an auxiliary result that generalizes a corresponding result by Nazar and Ortner \cite{NO17} and that may also be of independent interest.
For a more precise statement of the assumptions and the result, see (A1) and Theorem~\ref{theorempartialijpsi} below.

Consider any Bravais lattice and denote by $F\in \mathbb{R}^{3\times 3}$ a matrix whose columns are given by the corresponding three primitive vectors.
Our key assumptions on the nuclear charge distribution $m$ are as follows.
\begin{enumerate}
\item[(A0)] Let $m$ be a random nuclear charge distribution (a -- random -- locally finite nonnegative Radon measure) on $\mathbb{R}^3$. In other words, let a probability space $(\Omega,\mathcal{F},\mathbb{P})$ be given along with a random variable $m$ taking values in the space of locally finite nonnegative Radon measures on $\mathbb{R}^3$.
\item[(A1)] Suppose that \textit{uniform local finiteness} of the nuclear charge distribution $m$ holds in the following sense: There exist constants $\rho > 0$, $M \geq 0$, and $\omega_0 > 0$ such that $m$ is of the form
\begin{align*}
m = m_c + \sum_{y \in \mathbb P} c_y \delta_y
\end{align*}
for some $m_c=m_c(m) \in L^2_\mathrm{uloc}(\mathbb R^3)$ with $m_c \geq 0$, some  $c_y=c_y(m) > 0$, and some set $\mathbb P=\mathbb P(m) \subset \mathbb R^3$ satisfying $|x-y| \geq 4\rho$ for all $x,y \in \mathbb P$ with $x \neq y$, and in addition the estimate
\begin{align*}
\sup_{x \in \mathbb R^3} \Big( \int_{B_1(x)} m_c^2 \, dy \Big)^\frac12 + \sup_{x \in \mathbb R^3} \Big( \sum_{y \in \mathbb P \cap B_1(x)} c_y^2 \Big)^\frac12 \leq M
\end{align*}
holds.
Furthermore, suppose that an \emph{averaged lower bound} for the nuclear charge density of the form
\begin{align*}
\inf_{x \in \mathbb R^3} \dashint_{B_R(x)} m \, dy \geq \omega_0
\end{align*}
holds for all $R \geq \omega_0^{-1}$ for some $\omega_0>0$.
\item[(A2)] Let $m$ be \textit{stationary}, i.\,e.\ suppose that the law of the shifted charge distribution $m(\cdot + x)$ coincides with the law of $m$ for every $x \in  F\mathbb{Z}^3$.
\item[(A3)] Let $m$ have a \textit{finite range of dependence} $r\geq 1$, i.\,e.\ suppose that for any two Borel sets $A, B \subset \mathbb{R}^3$ with $\dist(A, B) \geq r$ the restrictions $m|_A$ and $m|_B$ are stochastically independent.
\end{enumerate}

We shall also use the concept of a \textit{periodization} of an ensemble of nuclear charge distributions $m$ (where an ensemble of nuclear charge distributions is defined as a probability measure on the space of nuclear charge distributions): A periodization of an ensemble of nuclear charge distributions is an ensemble of nuclear charge distributions $\tilde m$ which are almost surely $L F \mathbb Z^3$-periodic for some $L \gg 1$ and for which the probability distribution of $\tilde m |_{x + F[0, \frac{L}{2}]^3}$ coincides with the probability distribution of $m |_{x + F[0, \frac{L}{2}]^3}$ for all $x \in \mathbb R^3$. Given such a periodization $m$, we substitute (A3) by (A3\textsubscript a) -- (A3\textsubscript c):

\begin{enumerate}
\item[(A3\textsubscript a)] The nuclear charge $\tilde m$ is almost surely $L F \mathbb Z^3$-periodic for some $L \gg 1$.
\item[(A3\textsubscript b)] There exists a \textit{finite range of dependence} $r > 0$ such that for any two Borel $L \mathbb Z^3$-periodic sets $A, B \subset \mathbb R^3$ with $\dist(A, B) \geq r$ the restrictions $\tilde m|_A$ and $\tilde m|_B$ are stochastically independent.
\item[(A3\textsubscript c)] There exists a nuclear charge distribution $m$ satisfying (A1), (A2), and (A3) such that for any $x_0 \in \mathbb R^3$ the law of the restriction $\tilde m|_{x_0 + F[0, \frac{L}{2} ]^3}$ coincides with the law of $m|_{x_0 + F[0, \frac{L}{2} ]^3}$.
\end{enumerate}

Let us briefly comment on our main assumptions. The condition (A1) is nothing but a uniform local upper and lower bound on the charge distribution of the nuclei. The condition (A3) is a strong decorrelation assumption restricting all stochastic dependencies to a scale $r\geq 1$.

The condition (A2) imposes a statistical homogeneity assumption on the random lattice.
Since we want to include the model case of a periodic lattice like $\mathbb{Z}^3$ whose sites are occupied by random atomic nuclei (i.\,e.\ at whose lattice sites there is a random multiple of a Dirac charge; see Figure~\ref{FigureRandomLattice}) in our assumptions, we cannot assume translation invariance of the law of the nuclear charge distribution $m$ with respect to arbitrary shifts $x\in \mathbb{R}^3$. Instead, in the case of the lattice $\mathbb{Z}^3$ we have to restrict the translation invariance to discrete shifts $x\in \mathbb{Z}^3$. As we are interested in the effective elastic properties and as most (elastic) affine deformations of $\mathbb{Z}^3$ destroy the $\mathbb{Z}^3$ periodicity, we have to cover the case of an arbitrary Bravais lattice $F\mathbb{Z}^3$ in our assumption (A2).

Let us now give a precise definition of the TFW energy and its thermodynamic limit.

\begin{definition}
\label{defenergy}
Let $m$ be a nuclear charge distribution satisfying the assumption (A1). For a set $Q \subset \mathbb R^3$ with finite volume, we introduce the \emph{Thomas--Fermi--von Weizs\"acker energy}
\begin{equation}
\label{eqtfwenergy}
E_Q[m] \coleq \int_{Q} |\nabla u|^2  + u^\frac{10}3  + \frac12 (m_c - u^2) \phi \, dx + \sum_{x \in \mathbb P \cap Q} c_{x} (\phi - \phi_{x})(x)
\end{equation}
where $(u,\phi) \in H^1_\mathrm{uloc}(\mathbb R^3) \times L^2_\mathrm{uloc}(\mathbb R^3)$ is the (unique) solution of the TFW equations \eqref{eqtfw} (see Theorem \ref{theoreminf}) and where $\phi_{x} \in L^2(\mathbb R^3)$ is the (decaying) solution of $-\Delta \phi_{x} = c_{x} \delta_{x}$ on $\mathbb R^3$.

We define the thermodynamic limit $E_\infty$ of the energy density -- the \emph{effective energy density} -- as
\begin{align}
\label{eqtfwlimit}
E_\infty \coleq \lim_{L \rightarrow \infty} L^{-3} E_{[0,L]^3}[m]
\end{align}
if the limit exists.

Let $m$ be a nuclear charge distribution satisfying the assumptions (A0)-(A3).
Given $L\geq 1$, let $\tilde m$ be a periodization of the probability distribution of the nuclear charge distribution $m$ subject to (A3\textsubscript a)-(A3\textsubscript c). We define the approximation $E^\RVE_L$ of the effective energy density $E_\infty$ by the representative volume method as
\begin{align*}
E^\RVE_L := \frac{1}{L^3 \det F} &\Bigg( \int_{F[0,L]^3} |\nabla \tilde u|^2  + {\tilde u}^\frac{10}3 + \frac12 (\tilde m_c - \tilde u^2) \tilde \phi \, dx
\\&~~~~~
+ \sum_{x \in {\tilde {\mathbb P}} \cap F[0,L)^3} \tilde c_{x} (\tilde \phi - \phi_{x})(x) \Bigg)
\end{align*}
where $(\tilde u,\tilde \phi) \in H^1_\mathrm{uloc}(\mathbb R^3) \times L^2_\mathrm{uloc}(\mathbb R^3)$ denotes the (unique) solution of the TFW equations \eqref{eqtfw} given the nuclear charge distribution $\tilde m$.
Note that both $\tilde u$ and $\tilde \phi$ inherit the $L$-periodicity of the nuclear charge distribution $\tilde m$ \cite{CLBL98}.

Finally, let $N\in \mathbb{N}$, let $\mathcal{F}$ be a measurable $\mathbb{R}^N$-valued function of the (periodized) nuclear charge distribution $\tilde m$, and let $\delta>0$. We then define $E^\SQS_L$ to denote the approximation of the effective energy density $E_\infty$ using the selection method for representative volumes with the selection performed according to the criterion
\begin{align}
\label{Select}
\big|\mathcal{F}-\mathbb{E}[\mathcal{F}]\big| \leq \delta L^{-3/2}.
\end{align}
In other words, the probability distribution of $E^\SQS_L$ is given as the conditional probability distribution of $E^\RVE_L$ given the event \eqref{Select}.
\end{definition}
Let us briefly discuss the TFW energy \eqref{eqtfwenergy}.
The first two terms in \eqref{eqtfwenergy} correspond to the kinetic energy of the electrons in the Thomas--Fermi--von~Weizs\"acker approximation. The third and fourth term correspond to the Coulomb energy. Here, the contribution from the nuclear charges $m$ has been split into two terms, representing the absolutely continuous part $m_c$ and the singular part $\sum_x c_{x} \delta_{x}$ of the nuclear charge distribution. The presence of the difference $\phi-\phi_x$ in the fourth term in \eqref{eqtfwenergy} corresponds to the usual subtraction of the self-energy of point charges.
Note that the difference $\phi-\phi_x$ satisfies the PDE
\begin{align}
\label{TFWEnergy}
-\Delta (\phi - \phi_{x}) = 4\pi \bigg( m_c - u^2 + \sum_{\genfrac{}{}{0pt}{2}{y \in \mathbb P}{y \neq x}} c_{y} \delta_{y} \bigg),
\end{align}
which by $\phi - \phi_{x} \in H^2(B_{\rho}(x)) \hookrightarrow C^{0,\frac12}(B_{\rho}(x))$ ensures that the pointwise evaluation of $\phi - \phi_{x}$ at the point $x$ in the above definition is indeed meaningful.

We next state additional assumptions and notation which will be needed to formulate our main result on the analysis of the selection approach for representative volumes.
\begin{assumption}
\label{assump}
Consider a probability distribution of nuclear charges $m$ on $\mathbb R^3$ satisfying (A0), (A1), (A2), and (A3). Let $L \in \mathbb{N}$, $L\geq 2$, and assume that there exists an $L$-periodization $\tilde m$ of the probability distribution of $m$ subject to (A1), (A2), and (A3\textsubscript a) -- (A3\textsubscript c).
Let $\mathcal F(\tilde m) = (\mathcal F_1(\tilde m), \dots, \mathcal F_N(\tilde m))$ (for some $N\in \mathbb{N}$) be a collection of statistical quantities of the nuclear charge density $\tilde m$ which are subject to the conditions of Definition \ref{defmultilevel} with $K \leq C_0$ and $B \leq C_0 |\log L|^{C_0}$ for some $C_0 > 0$. Suppose that the covariance matrix of $\mathcal F(\tilde m)$ is nondegenerate and bounded in the natural scaling in the sense 
\begin{equation}
\label{eqvarfmbounds}
L^{-3} \mathrm{Id} \leq \Var \mathcal F(\tilde m) \leq C_0 L^{-3} \mathrm{Id}.
\end{equation}
We introduce the condition number $\kappa$ of the covariance matrix of $(E^\RVE_L, \mathcal F(\tilde m))$
\begin{align*}
\kappa \coleq \kappa \big(\Var (E^\RVE_L, \mathcal F(\tilde m)) \big)
\end{align*}
and the ratio $r_{\mathrm{Var}}$ between the expected order of fluctuations and the actual fluctuations of the approximation $E^\RVE_L$
\begin{align*}
r_{\mathrm{Var}} \coleq \frac{L^{-3}}{\Var E^\RVE_L}.
\end{align*}
\end{assumption}
Let us briefly mention that the following statistical quantities $\mathcal{F}$ satisfy the conditions of Definition~\ref{defmultilevel} below and are therefore admissible choices in our main result (i.\,e.\ in Theorem~\ref{theoremselection} below):
\begin{itemize}
\item The density of nuclei of a specific type
\begin{align*}
\mathcal{F}_{1,a} := \det F^{-1} L^{-3} \sharp\{x\in \tilde {\mathbb{P}}\cap F[0,L)^3:c_x=a\}.
\end{align*}
\item The density of nearest-neighbor contacts of two specified types of nuclei
\begin{align*}
\mathcal{F}_{2,a,b} := \det F^{-1} L^{-3} \sharp\{x\in \tilde {\mathbb{P}}\cap F[0,L)^3:&~c_x=a,~c_{x\pm Fe_j}=b
\\&~~~
\text{ for some }1\leq j\leq 3\}
\end{align*}
in case that the nuclei are arranged on the lattice $F\mathbb{Z}^3$.
\item Similar statistics of configurations of three or more neighboring atoms or corresponding quantities for more general atomic lattices.
\end{itemize}
Note that it is precisely these type of statistics of the random atomic lattice that are considered in the original formulation of the method of special quasirandom structures \cite{ZWFB90}.

We are now in a position to formulate our main result, the gain in accuracy by the selection approach for representative volumes in the context of the TFW model for random alloys. Note that our main result comprises essentially three assertions:
\begin{itemize}
\item The increase in accuracy of $E^\SQS_L$ (as compared to $E^\RVE_L$)  \eqref{eqvariance}, which is achieved via the reduction of fluctuations by essentially the fraction of the variance of $E^\RVE_L$ explained by the statistical quantities $\mathcal{F}$.
\item The higher-order approximation quality \eqref{eqsystematicerror} of the expected value $\mathbb{E}[E^\SQS_L]$.
\item The lower bound \eqref{eqselcritsatisfied} for the probability that a randomly chosen nuclear charge distribution $\tilde m$ meets the selection criterion \eqref{eqselectioncrit}.
\end{itemize}

\begin{theorem}
\label{theoremselection}
Let Assumption \ref{assump} be satisfied. Denote by $E^\RVE_L$ the approximation for the effective energy $E_\infty$ by the standard representative volume element method and by $E^\SQS_L$ the approximation for $E_\infty$ by the selection approach for representative volumes introduced by Le Bris, Legoll, and Minvielle \cite{LLM16} in the case of a representative volume of size $L$. Further assume that in the selection approach, the
representative volumes
are selected from the periodized probability distribution according to the criterion
\begin{equation}
\label{eqselectioncrit}
|\mathcal F(\tilde m) - \mathbb E[\mathcal F(\tilde m)]| \leq \delta L^{-3/2}
\end{equation}
for some $\delta \in (0, 1]$ satisfying $\delta^N \geq C L^{-3/2} |\log L|^C$. Then, the selection approach for representative volumes is subject to the following error analysis:
\begin{enumerate}
\item[(a)] The systematic error of the approximation $E^\SQS_L$ satisfies
\begin{equation}
\label{eqsystematicerror}
\big| \mathbb E \big[ E^\SQS_L \big] - E_\infty \big| \leq \frac{C \kappa^{3/2}}{\delta^N} L^{-3} |\log L|^{C}.
\end{equation}

\item[(b)] The variance of the approximation $E^\SQS_L$ is bounded from above by
\begin{equation}
\label{eqvariance}
\frac{\Var E^\SQS_L}{\Var E^\RVE_L} \leq 1 - (1 - \delta^2) |\rho|^2 + \frac{C \kappa^{3/2} r_{\mathrm{Var}}}{\delta^N} L^{-3/2} |\log L|^{C}
\end{equation}
where $|\rho|^2$ is the fraction of the variance of $E^\RVE_L$ explained by the $\mathcal F(\tilde m)$. In other words, $|\rho|^2$ is the maximal squared correlation coefficient between $E^\RVE_L$ and any linear combination of the $\mathcal F(\tilde m)$. This explained fraction of the variance is given by the expression
\begin{equation}
\label{eqexplainedfraction}
|\rho|^2 \coleq \frac{\Cov [E^\RVE_L, \mathcal F(\tilde m)] (\Var \mathcal F(\tilde m))^{-1} \Cov [\mathcal F(\tilde m), E^\RVE_L]}{\Var E^\RVE_L}.
\end{equation}

\item[(c)] The probability that a randomly chosen nuclear charge distribution $\tilde m$ satisfies the selection criterion \eqref{eqselectioncrit} is at least 
\begin{equation}
\label{eqselcritsatisfied}
\mathbb P[|\mathcal F(\tilde m) - \mathbb E [\mathcal F(\tilde m)]| \leq \delta L^{-3/2}] \geq c(N) \delta^N.
\end{equation}
\end{enumerate}
\end{theorem}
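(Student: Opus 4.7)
The plan is to deduce Theorem~\ref{theoremselection} from the abstract variance reduction lemma (Lemma~\ref{lemmavariancereduction}) stated in the excerpt, whose hypotheses amount to (i) a multilevel local dependence representation of the quantity of interest $E^\RVE_L$ jointly with the auxiliary statistics $\mathcal{F}(\tilde m)$, and (ii) the variance and moment bounds required for the underlying quantitative multivariate CLT. The bulk of the work is therefore to verify these hypotheses for the TFW energy; the three conclusions (a)--(c) then follow by plugging into the abstract lemma.

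I would first decompose the finite-volume TFW energy into local contributions. Writing
\begin{equation*}
E^\RVE_L = \frac{1}{L^3 \det F}\sum_{z\in F\mathbb{Z}^3 \cap F[0,L)^3} e(z,\tilde m),
\end{equation*}
where $e(z,\tilde m)$ localizes the integrand of \eqref{eqtfwenergy} to a unit cell around $z$ (including the point-charge contributions $\tilde c_x(\tilde\phi-\phi_x)(x)$ for $x$ in that cell), I would use the exponential locality result Theorem~\ref{theorempartialijpsi} together with the structural PDEs \eqref{eqtfw} to show that each $e(z,\tilde m)$ depends on $\tilde m$ restricted to the ball $B_\ell(z)$ up to an error that decays as $C\exp(-c\ell)$ in $L^2_\mathrm{uloc}$, uniformly in $L$. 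Combining this with the finite range of dependence (A3\textsubscript{b}) and the periodization condition (A3\textsubscript{c}), the random variables $e(z,\tilde m)$ admit a \emph{multilevel local dependence} decomposition in the sense of Definition~\ref{defmultilevel}: the $\ell$-th level captures contributions that depend on $\tilde m|_{B_\ell(z)}$, contributes an amplitude $\lesssim \exp(-c\ell)$, and has range of dependence $\lesssim \ell + r$. By assumption, the components of $\mathcal{F}(\tilde m)$ already satisfy such a decomposition with $K\le C_0$, $B\le C_0|\log L|^{C_0}$; together with the exponential tails just obtained for $E^\RVE_L$, the joint vector $(E^\RVE_L,\mathcal{F}(\tilde m))$ satisfies the same type of structure with analogous parameters (absorbing the geometric series in $\ell$ into the prefactor).

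I would then verify the variance/nondegeneracy input needed by Lemma~\ref{lemmavariancereduction}: the upper bound $\Var E^\RVE_L \lesssim L^{-3}$ is immediate from the localized decomposition and the finite range of dependence, while the nondegeneracy of $\Var\mathcal{F}(\tilde m)$ and the definition of $\kappa$ and $r_\mathrm{Var}$ match the quantities appearing in the abstract statement. At this point Lemma~\ref{lemmavariancereduction} directly produces the quantitative multivariate normal approximation for $(E^\RVE_L,\mathcal{F}(\tilde m))$ at the CLT rate $L^{-3/2}|\log L|^C$ (the $|\log L|^C$ being inherited from the multilevel parameters), from which the three conclusions follow in the standard way. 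Assertion (c) is the anti-concentration statement for the Gaussian approximation of $\mathcal{F}(\tilde m)$ in an $\mathbb{R}^N$ ball of radius $\delta L^{-3/2}$, whose Gaussian probability is $\gtrsim c(N)\delta^N$ by \eqref{eqvarfmbounds}, and the assumption $\delta^N \ge C L^{-3/2}|\log L|^C$ guarantees this dominates the normal approximation error. Assertion (a) is obtained by computing $\mathbb{E}[E^\RVE_L\mid \mathcal{F}\in B]$ via the joint Gaussian approximation: in the Gaussian case this expectation equals $\mathbb{E}[E^\RVE_L]$ exactly by symmetry of the ball around $\mathbb{E}[\mathcal{F}]$, so the only deviation comes from the CLT error, divided by the probability in (c), which yields the factor $\kappa^{3/2}/\delta^N$. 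To recover $E_\infty$ I would additionally use the exponential locality to control the systematic error $|\mathbb{E}[E^\RVE_L] - E_\infty|$ by $C\exp(-cL)$, which is absorbed into the right-hand side. Assertion (b) is the corresponding Gaussian conditional-variance identity $\Var(E^\RVE_L\mid \mathcal{F}\in B) = (1-|\rho|^2)\Var E^\RVE_L + \delta^2|\rho|^2 \Var E^\RVE_L$ (again exact in the Gaussian case), plus the CLT error rescaled by $r_\mathrm{Var}$.

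The main obstacle I expect is the first step: upgrading the exponential locality of the TFW ground state (Theorem~\ref{theorempartialijpsi}, which controls $\partial_{ij}(u,\phi)$ with respect to pointwise perturbations of $m$) into an exponential locality statement for the \emph{energy} $e(z,\tilde m)$, with quantitative $L^2_\mathrm{uloc}$ bounds on the derivatives that are uniform in $L$ and compatible with the multilevel structure of Definition~\ref{defmultilevel}. This requires integrating the pointwise locality estimate against variations of $\tilde m$ supported at distance $\ell$, controlling both the smooth part $\tilde m_c$ and the point-charge part $\sum c_x\delta_x$, and then reassembling the contributions into level-sets $B_{2^k}\setminus B_{2^{k-1}}$ with the correct geometric weights. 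Once this locality-to-energy transfer is in place, the rest of the argument is a direct application of the abstract machinery of \cite{FischerVarianceReduction,FischerMultilevelLocalDependence}.
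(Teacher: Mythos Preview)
Your strategy is correct and matches the paper's: reduce to the abstract Lemma~\ref{lemmavariancereduction} by verifying that $E^\RVE_L$ has a multilevel local dependence structure, using exponential locality of the TFW model, and separately control $|\mathbb{E}[E^\RVE_L]-E_\infty|$ by $C\exp(-cL)$. You also correctly identify the main technical step as transferring solution locality (Theorem~\ref{theorempartialijpsi}) to \emph{energy} locality; the paper packages exactly this as Corollary~\ref{prop2diff}.

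The one noteworthy difference is in the decomposition itself. You propose a genuinely dyadic multilevel splitting, with the $\ell$-th level carrying amplitude $\lesssim e^{-c\ell}$. The paper instead exploits the exponential locality to collapse everything to \emph{two} levels: for each unit cube $Q_1(y)$ it replaces $\tilde m$ by $\tilde m|_{Q_{K\log_2 L}(y)}^{\mathrm{ext}}$ (the restriction extended by a constant outside), setting $E_y^0 := L^{-3}E_{Q_1(y)}[\tilde m|_{Q_{K\log_2 L}(y)}^{\mathrm{ext}}]$ and lumping all remainders into a single top-level term $E^{1+\log_2 L}$. Choosing $K$ so that $e^{-cK\log_2 L}\le L^{-3}$ makes this remainder satisfy the required bound $|E^{1+\log_2 L}|\le CL^{-3}$ directly from Corollary~\ref{prop2diff}. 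This is simpler than your dyadic scheme (all intermediate levels are zero) and avoids the bookkeeping of summing the geometric series in $\ell$; your version would work too but is more elaborate than necessary given exponential decay. The systematic-error step~2) in the paper uses the same extension trick on cubes of side $L/4$ inside $L/2$-neighborhoods, together with the equality in law from (A3\textsubscript{c}), to compare $\mathbb{E}[E^\RVE_L]$ with the infinite-volume expectation.
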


We next state our precise assumption on the statistical quantities $\mathcal{F}$.

\begin{definition}[similar to {\cite[Definition~3]{FischerMultilevelLocalDependence}}]
\label{defmultilevel}
Let $L \geq 2$, and consider a probability distribution of $LF\mathbb{Z}^3$-periodic nuclear charges $\tilde m$ on $\mathbb R^3$ satisfying (A1), (A2), and (A3\textsubscript a) -- (A3\textsubscript c). Let $X = X[\tilde m]$ be a random variable of the periodized nuclear charge.
We say that $X$ is a sum of random variables with multilevel local dependence if there exist random variables $X_y^n = X_y^n[\tilde m]$, $0 \leq n \leq 1 + \log_2 L$, $y \in 2^n F\mathbb Z^3 \cap F[0,L)^3$, and constants $K\geq 1$ and $B \geq 1$ with the following properties:
\begin{itemize}
\item The random variable $X_y^n[\tilde m]$ only depends on $\tilde m|_{y + K \log_2 L F [-2^n, 2^n]^3}$.
\item We have 
\begin{align*}
X = \sum_{n=0}^{1 + \log_2 L} \sum_{y \in 2^n F \mathbb Z^3 \cap F [0,L)^3} X_y^n.
\end{align*}
\item The random variables satisfy almost surely
\begin{align*}
| X_y^n | \leq B L^{-3}.
\end{align*}
\end{itemize}
\end{definition}

Our proof of Theorem~\ref{theoremselection} makes use of the following exponential locality result for solutions to the TFW equations, which extends a similar locality result of \cite{NO17} to include point charges and which may be of independent interest.
\begin{theorem}
\label{theorempartialijpsi}
Let $m_1$ and $m_2$ be two nonnegative nuclear charge distributions (i.\,e.\ nonnegative locally finite Radon measures) subject to the assumption (A1). Denote by $(u_1,\phi_1)\in \Huloc \times \Luloc$ and $(u_2,\phi_2)\in \Huloc \times \Luloc$ the corresponding solutions to the TFW equations \eqref{eqtfw}. Then the perturbations of the electronic density $w := u_1 - u_2$ and the potential $\psi := \phi_1 - \phi_2$ decay exponentially away from the perturbation of the nuclear charge distribution $\delta m := m_1 - m_2$. More precisely, there exist constants $C = C(\rho, M, \omega_0) > 0$ and $\gamma = \gamma(\rho, M, \omega_0) > 0$ such that for all $y \in \mathbb R^3$
the estimate
\begin{align*}
&\int_{\mathbb R^3} \Big( w^2 + |\nabla w|^2 + \psi^2 + \eta |\nabla \psi|^2 + \eta^2 \sum_{i,j=1}^{3} |\partial_{ij} \psi|^2 \Big) e^{-2\gamma|x-y|} \, dx
\\&~~~~
\leq C \left( \int_{\{\eta < 1\}} (w^2 + \psi^2) e^{-2\gamma |x-y|} \, dx + \int_{\mathbb R^3} (\delta m_c)^2 e^{-2\gamma |x-y|} \, dx \right)
\end{align*}
holds, where the cutoff $\eta$ ($0\leq \eta\leq 1$) is defined in Assumption \ref{assumplocality} below and where $\delta m_c:=m_{1,c}-m_{2,c}$. Here, we have made use of the decomposition $m_i=m_{i,c}+\sum_{x\in \mathbb P_i} c_{i,x} \delta_x$ provided by assumption (A1).
\end{theorem}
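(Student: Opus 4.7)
The plan is to extend the weighted $L^2$ energy method of Nazar and Ortner \cite{NO17} to accommodate point charges, where the difficulty stems from the singularities of $\phi_1, \phi_2,$ and $\delta m$ at the positions of the nuclei in $\mathbb P_1\cup\mathbb P_2$. I first subtract the two TFW systems and linearize the nonlinearities by the mean value theorem: writing $\tilde u$ for an intermediate value between $u_1$ and $u_2$, the perturbations $w$ and $\psi$ satisfy the coupled linear system
\begin{align*}
-\Delta w + V\, w &= u_2\,\psi,\\
-\Delta \psi + 4\pi(u_1+u_2)\,w &= 4\pi\,\delta m,
\end{align*}
with $V \coleq \tfrac{35}{9}\tilde u^{4/3} - \phi_1$. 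Using the a priori bounds on $u_i$ and $\phi_i$ available from the theory of TFW (together with (A1), which secures $u_i\geq c_\star>0$ away from the singular support), and exploiting the TFW identity $\phi_i=\tfrac{5}{3}u_i^{4/3}-\Delta u_i/u_i$ precisely to cancel the singular part of $\phi_1$ against the nonlinear term $\tfrac{35}{9}\tilde u^{4/3}$, the linearized operator on $(w,\psi)$ is coercive in a weighted sense modulo a controlled correction supported near the point charges.

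With coercivity in hand, I test the $w$-equation against $w\,e^{-2\gamma|x-y|}$ for $\gamma>0$ small (depending only on $\rho,M,\omega_0$) and integrate by parts. The commutator with the exponential weight contributes $|\nabla e^{-\gamma|x-y|}|^2\leq \gamma^2 e^{-2\gamma|x-y|}$, which is absorbed into the coercive zero-order term for $\gamma$ small enough. This yields the Agmon-type bound
\begin{align*}
\int_{\mathbb R^3}(|\nabla w|^2+w^2)\,e^{-2\gamma|x-y|}\,dx \leq C\int_{\mathbb R^3}\psi^2\,e^{-2\gamma|x-y|}\,dx + \text{(absorbable)},
\end{align*}
so weighted control of $w$ reduces to weighted control of $\psi$.

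The Poisson equation for $\psi$ cannot be tested directly against $\psi\,e^{-2\gamma|x-y|}$ because $\delta m$ contains Dirac masses at every point of $\mathbb P_1\cup\mathbb P_2$. Instead, I test against $\eta^2\psi\,e^{-2\gamma|x-y|}$, where $\eta$ is the cutoff of Assumption~\ref{assumplocality}, chosen to vanish on a neighborhood of each point charge. Then $\eta^2\,\delta m=\eta^2\,\delta m_c$, so the right-hand side becomes $4\pi\int\eta^2\psi\,\delta m_c\,e^{-2\gamma|x-y|}\,dx$, which is absorbed via Cauchy--Schwarz into the target weighted $L^2$ norm of $\delta m_c$. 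The integration-by-parts terms generated by the derivatives falling on $\eta$ are supported in $\{\eta<1\}$ and produce precisely the boundary-layer contribution $\int_{\{\eta<1\}}(w^2+\psi^2)\,e^{-2\gamma|x-y|}\,dx$ appearing on the right-hand side of the claim. The weighted Hessian bound $\int \eta^2\sum_{ij}|\partial_{ij}\psi|^2 e^{-2\gamma|x-y|}\,dx$ follows from a local Calder\'on--Zygmund $W^{2,2}$ estimate applied to $\eta\psi$, which satisfies $-\Delta(\eta\psi)=-\eta\Delta\psi-2\nabla\eta\cdot\nabla\psi-\Delta\eta\,\psi$; on the support of $\eta$ the right-hand side is an $L^2$ function already controlled by the preceding weighted estimates and by $\delta m_c$.

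Finally, combining the two weighted estimates with a suitable small multiplier on the $\psi$-equation, absorbing the cross terms $\int u_2\psi w\,e^{-2\gamma|x-y|}\,dx$ via Young's inequality, and choosing $\gamma$ small relative to the coercivity constant yields the claimed decay. The main obstacle throughout is the coercivity of the linearized operator near point charges: naively, both $\tfrac{35}{9}\tilde u^{4/3}$ and $\phi_1$ diverge, and the sign of $V$ is not manifest. The resolution, which is the key technical input beyond \cite{NO17}, is to treat the two equations as a coupled system and use the TFW identity to extract a nonnegative quadratic form plus a remainder that is supported where $\eta<1$ and therefore absorbed by the boundary-layer term on the right-hand side. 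Once coercivity is in place, the remainder of the argument is a direct adaptation of the exponential decay argument of \cite{NO17}, with the cutoff $\eta$ systematically inserted to isolate the singular support of $\delta m$ and of $\nabla\phi_i, \nabla^2\phi_i$.
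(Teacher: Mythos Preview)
Your outline has the right overall architecture---weighted energy estimates with the cutoff $\eta$ to avoid the Dirac masses---but two of the key steps are not justified, and the mechanisms you propose for them do not work.

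\textbf{Coercivity.} You write that near the point charges ``both $\tfrac{35}{9}\tilde u^{4/3}$ and $\phi_1$ diverge'' and that the TFW identity lets you ``cancel the singular part of $\phi_1$ against the nonlinear term $\tfrac{35}{9}\tilde u^{4/3}$''. This is incorrect: $u_i\in H^2_{\mathrm{uloc}}\hookrightarrow L^\infty$, so $\tilde u^{4/3}$ is bounded and there is nothing to cancel the Coulomb singularity of $\phi_1$. Your potential $V=\tfrac{35}{9}\tilde u^{4/3}-\phi_1$ is therefore large and negative near each nucleus of $m_1$, and no pointwise coercivity is available. The paper avoids this entirely: it uses the symmetric splitting $\phi_1 u_1-\phi_2 u_2=\tfrac{\phi_1+\phi_2}{2}w+\tfrac{u_1+u_2}{2}\psi$ and observes that each operator $L_i=-\Delta+\tfrac{5}{3}u_i^{4/3}-\phi_i$ is nonnegative on $H^1(\mathbb R^3)$ because it annihilates the strictly positive function $u_i$ (Lemma~\ref{lemmapositiveoperator}, a ground-state argument). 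Hence $L=\tfrac12(L_1+L_2)$ is nonnegative, and one gets $\langle w\xi,L(w\xi)\rangle\ge 0$ without ever analyzing $V$ pointwise. The remaining strictly positive zero-order term $\nu w^2$ comes from the elementary inequality $(u_1^{7/3}-u_2^{7/3})(u_1-u_2)\ge \tfrac12(u_1^{4/3}+u_2^{4/3})w^2+\nu w^2$.

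\textbf{Weighted $L^2$ control of $\psi$.} Testing the Poisson equation with $\eta^2\psi\,\xi^2$ gives you $\int\eta^2|\nabla\psi|^2\xi^2$, not $\int\psi^2\xi^2$: the equation has no zero-order term in $\psi$. Your plan to absorb the cross term $\int u_2\psi w\,\xi^2$ by Young's inequality then produces a term $\varepsilon\int\psi^2\xi^2$ on the wrong side with nothing to absorb it into. The paper obtains $\int\psi^2\xi^2$ by a different test: it tests the \emph{$w$-equation} with $\eta\psi\xi^2$, so that the right-hand side $\tfrac{u_1+u_2}{2}\psi$ multiplied by the test function yields $\int\tfrac{u_1+u_2}{2}\eta\psi^2\xi^2\ge c\int\eta\psi^2\xi^2$ on the left, using the uniform lower bound $u_i\ge c_\star>0$ (Lemma~\ref{lemmapsi}). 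The region $\{\eta<1\}$ is then pushed to the right-hand side.

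Once these two ingredients are in place, your treatment of the Hessian via $-\Delta(\eta\psi)$ (equivalently, the paper's direct integration by parts on $\int\eta^2|\partial_{ij}\psi|^2\xi^2$) is fine.
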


The following proposition comprises the exponential locality result for the TFW energy. Given two nuclear charge distributions $m_1$ and $m_2$, the difference between the values of the TFW energy evaluated for $m_1$ and $m_2$ within the domain $Q_1$ exponentially decreases with the distance between $\supp (m_1 - m_2)$ and $Q_1$.

\begin{corollary}
\label{prop2diff}
Let Assumption \ref{assumplocality} be satisfied. Then, there exist constants $C = C(\rho, M, \omega_0) > 0$ and $c = c(\rho, M, \omega_0) > 0$ such that for any cube $Q_1 \subset \mathbb R^3$ with unit volume the estimate
\begin{equation}
\label{eq2diff}
\big| E_{Q_1}[m_1] - E_{Q_1}[m_2] \big| \leq C e^{-c \, \dist(\supp(m_1 - m_2), \, Q_1)}
\end{equation}
holds true.
\end{corollary}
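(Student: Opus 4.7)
I would begin by reducing to the case $d := \operatorname{dist}(\operatorname{supp}(m_1 - m_2), Q_1) \geq R_0$ for a suitably large constant $R_0$, since for $d < R_0$ the estimate \eqref{eq2diff} follows from a uniform bound on the TFW energy density via the a priori $L^2_{\operatorname{uloc}}$-bounds on $u_i, \phi_i$ from Theorem~\ref{theoreminf} and from assumption (A1). In the regime $d \geq R_0$, the two distributions coincide on a neighborhood of $Q_1$: we have $m_{1,c} = m_{2,c} =: m_c$ on $Q_1$, and $\mathbb{P}_1 \cap Q_1 = \mathbb{P}_2 \cap Q_1$ with identical charges $c_x$, so in particular the self-potentials $\phi_x$ match and cancel in the point-charge sum.

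Writing $w := u_1 - u_2$ and $\psi := \phi_1 - \phi_2$, I linearize the energy difference as
\begin{align*}
E_{Q_1}[m_1] - E_{Q_1}[m_2]
&= \int_{Q_1} \nabla w \cdot \nabla(u_1+u_2) \, dx
+ \int_{Q_1} \bigl(u_1^{10/3} - u_2^{10/3}\bigr) \, dx \\
&\quad + \tfrac{1}{2} \int_{Q_1} \bigl((m_c - u_1^2)\,\psi - (u_1+u_2)\,w\,\phi_2\bigr) \, dx
+ \sum_{x \in \mathbb{P} \cap Q_1} c_x\,\psi(x).
\end{align*}
Each of the first three integrals is handled by Cauchy--Schwarz combined with a priori $L^\infty_{\operatorname{uloc}}$-bounds on $u_i$ and $L^2_{\operatorname{uloc}}$-bounds on $\nabla u_i, \phi_i$; for the nonlinear term one uses $|u_1^{10/3}-u_2^{10/3}| \leq C(u_1^{7/3}+u_2^{7/3})|w|$. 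All resulting upper bounds involve only the $L^2(Q_1)$-norms of $w, \nabla w, \psi$. Applying Theorem~\ref{theorempartialijpsi} with $y$ a fixed point of $Q_1$ and using $e^{-2\gamma|x-y|} \geq c$ on $Q_1$ converts these unweighted norms into the weighted integral on the left-hand side of the theorem. On the right-hand side, the $(\delta m_c)^2$-term is bounded by $C e^{-2\gamma d}$ using $\|\delta m_c\|_{L^2_{\operatorname{uloc}}} \leq 2M$ and the fact that $\operatorname{supp}(\delta m_c)$ lies at distance $\geq d$ from $y$, while the $\{\eta<1\}$-term is estimated by summing a geometric series over point charges of $\delta m$, each supplying a factor $e^{-2\gamma d}$ against the uniform packing from (A1).

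The pointwise sum $\sum_{x \in \mathbb{P} \cap Q_1} c_x\,\psi(x)$ is the most delicate piece. For any $x \in \mathbb{P} \cap Q_1$, by our distance assumption and the separation $4\rho$ from (A1), the ball $B_\rho(x)$ is disjoint both from $\operatorname{supp}(\delta m)$ and from all other point charges of $m_1 \cup m_2$. Hence on $B_\rho(x)$ the equation $-\Delta\psi = 4\pi(\delta m_c - (u_1+u_2)w)$ collapses to $-\Delta\psi = -4\pi(u_1+u_2)w$ with a bounded right-hand side. Interior elliptic regularity together with the Sobolev embedding $H^2(B_\rho) \hookrightarrow C^{0,1/2}(B_\rho)$ in three dimensions gives $|\psi(x)| \leq C\bigl(\|\psi\|_{L^2(B_\rho(x))} + \|w\|_{L^2(B_\rho(x))}\bigr)$, and both terms are bounded by $C e^{-\gamma d}$ via Theorem~\ref{theorempartialijpsi}. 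Since $\sharp(\mathbb{P} \cap Q_1)$ is bounded by (A1), combining all estimates yields \eqref{eq2diff} with a possibly smaller decay rate $c < \gamma$. The main obstacle I anticipate is the careful bookkeeping on the right-hand side of Theorem~\ref{theorempartialijpsi} — verifying that the weighted $L^2$ contribution of $w^2 + \psi^2$ summed over all balls $\{\eta<1\}$ near point charges is indeed dominated by $Ce^{-2\gamma d}$, which requires combining an a priori $L^2_{\operatorname{uloc}}$-bound on $(w,\psi)$, the uniform packing from (A1), and an absorption of polynomial factors into a slightly smaller exponent.
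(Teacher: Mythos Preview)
Your proof is correct and follows essentially the same strategy as the paper: decompose the energy difference term by term, control each piece via Cauchy--Schwarz against the a~priori $L^2_{\mathrm{uloc}}$ bounds from Proposition~\ref{propbounds}, feed the remaining $L^2(Q_1)$-norms of $w$, $\nabla w$, $\psi$ into the exponential locality estimate (the paper uses Lemma~\ref{lemmapsi} for the first three terms and Theorem~\ref{theorempartialijpsi} only for the point-charge sum), and finally extract one factor of $e^{-\gamma|x-y|}$ from $\xi^2$ on the right-hand side to produce the decay $e^{-c\,d}$, the remaining $\xi$ making the integral over the $L^2_{\mathrm{uloc}}$ quantities converge. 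The only cosmetic differences are that you perform the small-distance/large-distance split globally at the outset (the paper does it only for the point-charge term) and that you obtain $|\psi(x)|$ by a direct interior-regularity argument on $B_\rho(x)$, whereas the paper appeals to the second-derivative bound $\eta^2|\partial_{ij}\psi|^2$ already packaged in Theorem~\ref{theorempartialijpsi}; both routes yield the same $H^2\hookrightarrow C^{0,1/2}$ control.
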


\section{Analysis of the Method of Special Quasirandom Structures}
\label{secproofvariancereduction}

The following lemma serves as the main technical tool to prove Theorem \ref{theoremselection}. In fact, it is an abstract version of \cite[Theorem 2]{FischerVarianceReduction}: One may adapt the proof of \cite[Theorem 2]{FischerVarianceReduction} in a one-to-one fashion to establish Lemma \ref{lemmavariancereduction}.

\begin{lemma}
\label{lemmavariancereduction}
Let $d, N \in \mathbb N$, $d \geq 2$, $N \geq 1$, $C_0 \geq 0$, $L \geq 2$, and let $C > 0$ denote a generic constant which only depends on $d$, $N$, and $C_0$ as well as on $K$ and $B$ from Definition \ref{defmultilevel}. Let $Z = (Z_0,Z_1,\dots,Z_N)$ be a vector of random variables. Suppose that each $Z_i$, $i \in \{0, \dots, N\}$, is a sum of random variables with multilevel local dependence according to Definition \ref{defmultilevel}. Assume that the covariance matrix of $(Z_1, \dots, Z_N)$ is nondegenerate and bounded in the natural scaling
in the sense that 
\begin{equation*}
L^{-d} \mathrm{Id} \leq \Var (Z_1, \dots, Z_N) \leq C_0 L^{-d} \mathrm{Id}.
\end{equation*}

Let $\delta \in (0, 1]$ satisfy $\delta^N \geq C L^{-d/2} |\log L|^C$, and 
let $Z_{0, \mathrm{sel}}$ be a random variable whose law coincides with the probability distribution of the random variable $Z_0$ conditioned on the event $|Z_i - \mathbb E[Z_i]| \leq \delta L^{-d/2}$ for all $i \in \{1,\dots,N\}$. 

Introduce the condition number $\kappa$ of the covariance matrix of $(Z_0, \dots, Z_N)$,
\[
\kappa \coleq \kappa \big(\Var (Z_0, \dots, Z_N) \big),
\]
and the ratio $r_{\mathrm{Var}}$ between the expected order of fluctuations and the actual fluctuations of $Z_0$
\[
r_{\mathrm{Var}} \coleq \frac{L^{-d}}{\Var Z_0}.
\]
Then, the following estimates hold true:
\begin{enumerate}
\item[(a)] The difference of the expected values of $Z_{0,\mathrm{sel}}$ and $Z_0$ satisfies
\begin{equation*}
\big| \mathbb E \big[ Z_{0,\mathrm{sel}} \big] - \mathbb E \big[ Z_0 \big] \big| \leq \frac{C \kappa^{3/2}}{\delta^N} L^{-d} |\log L|^{C}.
\end{equation*}

\item[(b)] The variance of $Z_{0,\mathrm{sel}}$ is bounded from above by
\begin{equation*}
\frac{\Var Z_{0,\mathrm{sel}}}{\Var Z_{0}} \leq 1 - (1 - \delta^2) |\rho|^2 + \frac{C \kappa^{3/2} r_{\mathrm{Var}}}{\delta^N} L^{-d/2} |\log L|^{C}
\end{equation*}
where $|\rho|^2$ is the fraction of the variance of $Z_{0}$ explained by $(Z_1, \dots, Z_N)$. In other words, $|\rho|^2$ is the maximal squared correlation coefficient between $Z_{0}$ and any linear combination of the $Z_i$, $i \in \{1, \dots, N\}$. This fraction of the variance of $Z_0$ explained by $Z_1$, \ldots, $Z_N$ is given by the expression
\begin{equation*}
~~~~~~~~~~|\rho|^2 \coleq \frac{\Cov [Z_0, (Z_1, \dots, Z_N)] (\Var (Z_1, \dots, Z_N))^{-1} \Cov [(Z_1, \dots, Z_N), Z_0]}{\Var Z_0}.
\end{equation*}

\item[(c)] The probability that $(Z_1, \dots, Z_N)$ satisfies the imposed selection criterion is at least 
\begin{equation*}
\mathbb P[|(Z_1, \dots, Z_N) - \mathbb E [(Z_1, \dots, Z_N)]| \leq \delta L^{-d/2}] \geq c(N) \delta^N.
\end{equation*}
\end{enumerate}
\end{lemma}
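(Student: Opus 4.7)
The plan is to adapt the proof of \cite[Theorem~2]{FischerVarianceReduction} line by line; the argument rests on three ingredients: a quantitative multivariate normal approximation for the joint law of $(Z_0, Z_1, \dots, Z_N)$, exact calculations for the corresponding centered Gaussian reference distribution, and a transfer step that controls the error introduced by the small probability of the conditioning event.

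For Step~1, I would invoke the quantitative multivariate central limit theorem from \cite{FischerMultilevelLocalDependence}, which is tailored precisely to sums with the multilevel local dependence structure of Definition~\ref{defmultilevel}; its logarithmic hierarchy of scales accounts for the $|\log L|^C$ factors in the conclusion. Upon renormalizing $W := \Var(Z_0,\dots,Z_N)^{-1/2}(Z - \mathbb E[Z])$, this yields closeness (in a smoothed total variation or density sense on suitable rectangles and balls) of the law of $W$ to the standard Gaussian on $\mathbb R^{N+1}$, with error at most $C L^{-d/2}|\log L|^C$. The condition number $\kappa$ enters through this rescaling by $\Var(Z)^{-1/2}$ and ultimately produces the factor $\kappa^{3/2}$ in the bounds.

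For Step~2, I would carry out the conditioning calculations on the Gaussian reference distribution $(\tilde Z_0,\dots,\tilde Z_N) \sim \mathcal N(\mathbb E[Z], \Var Z)$. By the reflection symmetry $(Z_1,\dots,Z_N) \mapsto 2\mathbb E[(Z_1,\dots,Z_N)] - (Z_1,\dots,Z_N)$, which leaves both the Gaussian density and the centered ball invariant, the Gaussian conditional mean of $\tilde Z_0$ equals $\mathbb E[\tilde Z_0]$, so the Gaussian systematic error vanishes identically and only the CLT error contributes to (a). For (b), a regression decomposition $\tilde Z_0 = a\cdot(\tilde Z_1,\dots,\tilde Z_N) + R$ with $R$ independent of $(\tilde Z_1,\dots,\tilde Z_N)$ splits the conditional variance into $\Var(R) = (1-|\rho|^2)\Var Z_0$ (unaffected by the conditioning) plus the contribution of $a\cdot(\tilde Z - \mathbb E\tilde Z)$, which on the selection event is deterministically bounded by $|a|^2\delta^2 L^{-d}$; combining this with $|\rho|^2\Var Z_0 = a^T\Var(Z_1,\dots,Z_N)a$ and the lower bound $\Var(Z_1,\dots,Z_N)\geq L^{-d}\Id$ produces the factor $1 - (1-\delta^2)|\rho|^2$. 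For (c), the Gaussian probability of the conditioning event equals $c(N)\delta^N(1+O(\delta^2))$, since one intersects an $N$-dimensional nondegenerate Gaussian with a ball of radius comparable to its standard deviation.

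For Step~3, I would transfer the Gaussian statements to the actual random variables by integrating $1$, $Z_0$, and $Z_0^2$ against the indicator of the selection event using the CLT of Step~1. Dividing by $\mathbb P[\text{selection}] \geq c(N)\delta^N$ produces the $\delta^{-N}$ factors in (a) and (b), while the hypothesis $\delta^N \geq CL^{-d/2}|\log L|^C$ is exactly what is needed to absorb the CLT error into the Gaussian baseline probability so that (c) holds. The main obstacle, and the point where the ``one-to-one adaptation'' has to be executed carefully, is that the CLT a priori only controls expectations of smooth test functions; one must therefore either appeal to a density-type CLT on balls as in \cite{FischerMultilevelLocalDependence} or mollify the indicator of the ball on a scale small enough to preserve the $(1-\delta^2)|\rho|^2$ gain in (b) yet large enough to be absorbed into the smoothed CLT error, which is where the remaining logarithmic losses accumulate.
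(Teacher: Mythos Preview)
Your proposal is correct and aligns exactly with the paper's own treatment: the paper does not give an independent proof of Lemma~\ref{lemmavariancereduction} but simply states that it is an abstract version of \cite[Theorem~2]{FischerVarianceReduction} whose proof can be adapted in a one-to-one fashion. Your three-step outline (quantitative multivariate CLT via the multilevel local dependence structure, exact Gaussian conditioning computations yielding the $1-(1-\delta^2)|\rho|^2$ factor, and transfer via division by the selection probability) is precisely that adaptation.
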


By combining the locality properties of the TFW model established in Theorem~\ref{theorempartialijpsi} with the abstract variance reduction result of Lemma~\ref{lemmavariancereduction}, we now establish our main result.

\begin{proof}[Proof of Theorem \ref{theoremselection}]
Throughout the proof, we will assume $F=\Id$. The case of general $F$ is similar.

The proof of Theorem~\ref{theoremselection} is an immediate consequence of Lemma~\ref{lemmavariancereduction} as soon as we have established two results:
\begin{itemize}
\item[1)] The approximation $E^\RVE_L$ for the thermodynamic limit energy $E_\infty$ by the method of representative volumes
\begin{align*}
~~~~~~~~~~
&E^\RVE_L
:= L^{-3} E_{[0,L]^3}[\tilde m]
\\&
= L^{-3} \int_{[0,L]^3} |\nabla \tilde u|^2  + \tilde u^\frac{10}3  + \frac12 (\tilde m_c - \tilde u^2) \tilde \phi \, dx + L^{-3} \sum_{x \in \mathbb P \cap [0,L)^3} c_{x} (\tilde \phi - \phi_{x})(x)
\end{align*}
(where $(\tilde u,\tilde \phi)$ denotes the solution of the TFW equations as stated in Theorem~\ref{theoreminf}\,; note that by the $L$-periodicity of $\tilde m$, the solution $(\tilde u,\tilde \phi)$ is $L$-periodic) is a sum of random variables with multilevel local dependence according to Definition~\ref{defmultilevel}.
\item[2)] The estimate for the systematic error
\begin{align}
\label{ErrorEstimateSystmatic}
|\mathbb{E}[E^\RVE_L]-E_\infty|\leq C \exp(-c L)
\end{align}
holds.
\end{itemize}

\emph{Step 1: Proof of 1).}
We denote by $Q_\ell(x)$ the cube $x+[-\frac{\ell}{2},\frac{\ell}{2})^3$.
In order to establish the property 1), we may write
\begin{align*}
E^\RVE_L = \sum_{y \in \mathbb Z^3 \cap [0,L)^3} E_{y}^{0} + E^{1+\log_2 L}
\end{align*}
with
\begin{subequations}
\label{ChoiceEyn}
\begin{align}
E_{y}^{0} &:= L^{-3} E_{Q_1(y_i)}[\tilde m|_{Q_{K \log_2 L}(y)}^\mathrm{ext}]
\end{align}
and
\begin{align}
E^{1 + \log_2 L} &:=
L^{-3} \sum_{y \in 2^n \mathbb Z^3 \cap [0,L)^3} \big( E_{Q_1(y)}[\tilde m] - E_{Q_1(y)}[\tilde m|_{Q_{K \log_2 L}(y)}^\mathrm{ext}] \big).
\end{align}
\end{subequations}
Here, we employ the notation $\tilde m|_{Q_{K \log_2 L}(y_i)}^\mathrm{ext}$ to denote the extension of the restriction $\tilde m|_{Q_{K \log_2 L}(y_i)}$ to $\mathbb{R}^3$ by a constant multiple of the Lebesgue measure
\begin{align*}
\tilde m|_{Q_{K \log_2 L}(y_i)}^\mathrm{ext}(A) := \tilde m(A\cap Q_{K \log_2 L}(y_i)) + \int_{A\setminus Q_{K \log_2 L}(y_i)} 1 \,dx
\end{align*}
for any Borel set $A\subset \mathbb{R}^3$. The constant $K$ will be chosen below. Note that $\tilde m|_{Q_{K \log_2 L}(y_i)}^\mathrm{ext}$ is still subject to uniform bounds of the form (A1).

The first of the three conditions on the $X_y^n$ in Definition \ref{defmultilevel} is satisfied for the choice \eqref{ChoiceEyn} as the random variables $E_{y_i}^{0}$ only depend on $\tilde m|_{Q_{K \log_2 L}(y_i)}$. The second condition trivially holds true due to the definition of $E_{y_i}^{0}$ and $E^{1 + \log_2 L}$. The third condition for the $E_{y_i}^{0}$ -- that is, the bound $|E_{y_i}^{0}| \leq B L^{-3}$ -- follows from the structure of the Thomas--Fermi--von Weizs\"acker energy in \eqref{eqtfwenergy} and the bounds on $u$, $\phi$ and $m$ from Proposition \ref{propbounds}. Finally, to establish 1) it only remains to show the bound $|E^{1 + \log_2 L}|\leq C L^{-3}$. As a consequence of Corollary~\ref{prop2diff} and the equality $\tilde m=\tilde m|_{Q_{K \log_2 L}(y_i)}^\mathrm{ext}$ on $Q_{K \log_2 L}(y_i)$, we derive
\begin{align*}
\Big| E_{Q_1(y_i)}[\tilde m] - E_{Q_1(y_i)}[\tilde m|_{Q_{K \log_2 L}(y_i)}^\mathrm{ext}] \Big| \leq C e^{-c \frac{K \log_2 L - 1}{2}} \leq C e^{-c \frac{(K-1) \log_2 L}{2}} \leq C L^{-3}
\end{align*}
for the choice $K \coleq \frac{6}{c} + 1$, where the positive constants $C$ and $c$ only depend on $\rho$, $M$, and $\omega_0$. Taking the sum over all $y\in \mathbb{Z}^3 \cap [0,L)^d$ and multiplying by $L^{-3}$, we have shown the desired bound $|E^{1 + \log_2 L}| \leq C L^{-3}$.

\emph{Step 2: Proof of 2).}
In order to establish \eqref{ErrorEstimateSystmatic}, we may write
\begin{align*}
E^\RVE_L
&= L^{-3} \sum_{y\in \frac{L}{4} \{0,1,2,3\}^3} E_{Q_{L/4}(y)}[\tilde m]
\\&
= L^{-3} \sum_{y\in \frac{L}{4} \{0,1,2,3\}^3} E_{Q_{L/4}(y)}[\tilde m|_{Q_{L/2}(y)}^\mathrm{ext}]
\\&~~~~
+L^{-3} \sum_{y\in \frac{L}{4} \{0,1,2,3\}^3} \big(E_{Q_{L/4}(y)}[\tilde m]-E_{Q_{L/4}(y)}[\tilde m|_{Q_{L/2}(y)}^\mathrm{ext}]\big).
\end{align*}
Taking the expectation and estimating the terms in the last line using Corollary~\ref{prop2diff} and the equality $\tilde m=\tilde m|_{Q_{L/2}(y)}^\mathrm{ext}$ on $Q_{L/2}(y)$, we obtain
\begin{align}
\label{RestrictionError}
\bigg|\mathbb{E}[E^\RVE_L]
-L^{-3} \sum_{y\in \frac{L}{4} \{0,1,2,3\}^3} \mathbb{E}\big[E_{Q_{L/4}(y)}[\tilde m|_{Q_{L/2}(y)}^\mathrm{ext}]\big]
\bigg|
\leq C \exp(-cL).
\end{align}
Using the equality in law of $\tilde m|_{Q_{L/2}(y)}^\mathrm{ext}$ and $m|_{Q_{L/2}(y)}^\mathrm{ext}$, we deduce
\begin{align*}
\bigg|\mathbb{E}[E^\RVE_L]
-L^{-3} \sum_{y\in \frac{L}{4} \{0,1,2,3\}^3} \mathbb{E}\big[E_{Q_{L/4}(y)}[m|_{Q_{L/2}(y)}^\mathrm{ext}]\big]
\bigg|
\leq C \exp(-cL).
\end{align*}
Deriving the same estimate as in \eqref{RestrictionError} for $m$, we obtain \eqref{ErrorEstimateSystmatic}.
\end{proof}

\section{Locality of the TFW Equations Involving Point Charges}
\label{seclocality}

An important tool which we will utilize frequently to deal with the Dirac charges and the corresponding singularities of the potential is given by the class of cut-off functions $\eta_\rho$ which we introduce now. Note that given a nuclear charge distribution $m$, we will define $\eta_\rho$ in such a way that $\eta_\rho$ vanishes in a $\rho$-neighborhood of the point charges and that $\eta_\rho\equiv 1$ holds outside of a $2\rho$-neighborhood of all point charges.

\begin{notation}
\label{noteta}
For $\rho > 0$, we define the cut-off function $\wt\eta_\rho : [0, \infty) \rightarrow [0, 1]$ via 
\[
\wt\eta_\rho(r) \coleq \exp \left( -\frac{\rho \log 2}{2 (r - \rho)} \right)
\]
for $r \in (\rho, \frac32\rho]$, $\wt\eta_\rho(r) \coleq 1 - \wt\eta_\rho(3\rho-r)$ for $r \in (\frac32\rho, 2\rho)$, $\wt\eta_\rho = 0$ on $[0, \rho]$ and $\wt\eta_\rho = 1$ on $[2\rho, \infty)$.
For all $\rho > 0$, one thus has $\wt\eta_\rho \in C^1([0, \infty))$ and there exists a constant $c_\eta(\rho) > 0$ such that $\frac{|\wt\eta_\rho'|^2}{\wt\eta_\rho} \leq c_\eta(\rho)$ holds true on $(\rho,\infty)$. 

Moreover, for any discrete set $\mathbb P \subset \mathbb R^3$ satisfying $|x - y| \geq 4\rho$ for some $\rho > 0$ and all $x,y \in \mathbb P$, $x \neq y$, define $\eta_\rho : \mathbb R^3 \rightarrow [0,1]$ via $\eta_\rho \coleq \wt\eta_\rho(|\cdot - \, z|)$ on $B_{2\rho}(z)$ for all $z \in \mathbb P$ and $\eta_\rho \coleq 1$ elsewhere. Then, we have $\eta_\rho \in C^1(\mathbb R^3)$ and 
\begin{equation}
\label{eqestimateeta}
\frac{|\nabla \eta_\rho|^2}{\eta_\rho} \leq c_\eta(\rho)
\end{equation}
is valid on $\{\eta_\rho > 0\}$.
\end{notation}

We now collect the set of assumptions and notations which we employ within the subsequent lemmas and Theorem \ref{theorempartialijpsi}.

\begin{assumption}
\label{assumplocality}
Let $m_i$, $i \in \{1,2\}$, be charge distributions satisfying (A1), (A2), and (A3), 
and let $(u_i, \phi_i) \in H^1_\mathrm{uloc}(\mathbb R^3) \times L^2_\mathrm{uloc}(\mathbb R^3)$ denote the unique weak solution to the Thomas--Fermi--von Weizs\"acker equations 
\begin{equation}
\label{eqtfi}
\begin{cases}
-\Delta u_i + \frac53 u_i^\frac73 - \phi_i u_i = 0, \\
-\Delta \phi_i = 4\pi (m_i - u_i^2),
\end{cases}
\end{equation}
(see Theorem \ref{theoreminf}).

We define the short-hand notations $w \coleq u_1 - u_2$, $\psi \coleq \phi_1 - \phi_2$, $\delta m \coleq m_1 - m_2$, and $\delta m_c \coleq m_{c,1} - m_{c,2}$. The measure $\delta m$ then may be decomposed as
\begin{align*}
\delta m = \delta m_c + \sum_{x \in \mathbb P'} \delta c_x \delta_{x}
\end{align*}
where $\mathbb P' \subset \mathbb P_1\cup \mathbb P_2$ is the set of all $x \in \mathbb P_1\cup \mathbb P_2$ for which $\delta c_x \coleq c_{1,x} - c_{2,x} \neq 0$ holds true. Moreover, we will use the notation $\eta$ to denote the cutoff function $\eta_\rho$ from Notation \ref{noteta} with $\mathbb P'$ taking the role of $\mathbb P$. Finally, we introduce $\xi \coleq e^{-\gamma | \cdot - \, y |}$ for some $0 < \gamma < 1$ and $y \in \mathbb R^3$. Note that this choice entails $|\nabla \xi| \leq \gamma \xi \leq \xi$.
\end{assumption}

As a key step towards Theorem \ref{theorempartialijpsi}, we derive an upper bound for the weighted $L^2$-norm of $w$, $\nabla w$, and $\sqrt{\eta} \nabla \psi$ by adapting the strategy in \cite{NO17} to the more general case of locally finite nonnegative Radon measures representing the nuclear charges.

\begin{lemma}
\label{lemmaw}
Let Assumption \ref{assumplocality} be satisfied. Then, there exist positive constants $C = C(\rho, M, \omega_0) > 0$ and $\gamma = \gamma(\rho, M, \omega_0) > 0$ such that
\vspace*{-1ex}
\begin{multline*}
\int_{\mathbb R^3} (w^2 + |\nabla w|^2 + \eta |\nabla \psi|^2) \xi^2 \, dx \\
\leq C \bigg( \int_{\mathbb R^3} (w^2 + \psi^2) |\nabla \xi|^2 \, dx + \int_{\{\eta < 1\}} (w^2 + \psi^2) \xi^2 \, dx + \int_{\mathbb R^3} \big|\delta m_c \, \psi\big| \xi^2 \, dx \bigg)
\end{multline*}
holds, where $\eta$ and $\xi$ (depending on $\gamma$) are defined in Assumption \ref{assumplocality}.
\end{lemma}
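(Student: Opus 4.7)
I would adapt the strategy of Nazar--Ortner \cite{NO17} to the setting with point charges. The cutoff $\eta$ annihilates the Dirac contributions to $\delta m$ (which are supported on $\mathbb P'$), while the singularities of the individual $\phi_i$ at shared charges in $(\mathbb P_1 \cup \mathbb P_2)\setminus \mathbb P'$ must be removed by an algebraic manipulation exploiting the TFW structure itself.

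Subtracting the TFW systems for $(u_i, \phi_i)$ and using the symmetric identity $\phi_1 u_1-\phi_2 u_2 = \tfrac{1}{2}(\phi_1+\phi_2)w + \tfrac{1}{2}(u_1+u_2)\psi$, I obtain
\begin{align*}
-\Delta w + \tfrac{35}{9}\bar u^{4/3} w = \tfrac{1}{2}(\phi_1+\phi_2)w + \tfrac{1}{2}(u_1+u_2)\psi,
\end{align*}
with $\bar u^{4/3} := \int_0^1 ((1-t)u_2+tu_1)^{4/3}\,dt$, together with the Poisson equation $-\Delta\psi = 4\pi\delta m_c + 4\pi \sum_{x\in \mathbb P'}\delta c_x\delta_x - 4\pi(u_1+u_2)w$. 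Testing the $w$-equation against $w\xi^2$ and using $|\nabla\xi|\leq\gamma\xi$ to control $2\int w\xi \nabla w\cdot\nabla\xi$, the only term that is not already bounded by the desired RHS is $\int \tfrac{\phi_1+\phi_2}{2} w^2\xi^2$, which is pointwise singular at every charge in $\mathbb P_1\cup\mathbb P_2$.

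To control this term---the main technical obstacle---I would rewrite $\phi_i = -\Delta u_i/u_i + \tfrac{5}{3} u_i^{4/3}$ (a direct consequence of the TFW equation, valid since $u_i \geq c > 0$ by Proposition~\ref{propbounds}) and integrate by parts to get $\int \phi_i w^2\xi^2 = \int \nabla u_i\cdot\nabla(w^2\xi^2/u_i) + \tfrac{5}{3}\int u_i^{4/3} w^2\xi^2$. Expanding $\nabla(w^2\xi^2/u_i)$ and summing over $i=1,2$, the $\int |\nabla w|^2\xi^2$ term from the $w$-identity together with the $\nabla w$-cross pieces reassembles into the nonnegative sum of squares $\tfrac{1}{2}\sum_i\int |\nabla w - (w/u_i)\nabla u_i|^2\xi^2$. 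The residual coefficient of $w^2\xi^2$ becomes $\tfrac{35}{9}\bar u^{4/3} - \tfrac{5}{6}(u_1^{4/3}+u_2^{4/3})$, which is bounded below by a strictly positive multiple of $(u_1+u_2)^{4/3}$: Jensen applied to the convex function $x\mapsto x^{4/3}$ yields $\bar u^{4/3}\geq 2^{-4/3}(u_1+u_2)^{4/3}$, super-additivity of $x\mapsto x^{4/3}$ on $[0,\infty)$ yields $u_1^{4/3}+u_2^{4/3}\leq (u_1+u_2)^{4/3}$, and $\tfrac{35}{9\cdot 2^{4/3}} > \tfrac{5}{6}$. The leftover $\nabla u_i\cdot\nabla\xi$ cross pieces carry a prefactor $\gamma$ coming from $|\nabla\xi|\leq\gamma\xi$ and are absorbed for $\gamma$ small.

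To close the argument I would test the Poisson equation against $\eta\psi\xi^2$: the Dirac sources drop out (since $\eta\equiv 0$ on $\mathbb P'$), and the $\nabla\eta$- and $\nabla\xi$-crossings are absorbed using $|\nabla\eta|^2/\eta\leq c_\eta$ (supported in $\{\eta<1\}$) and $|\nabla\xi|\leq\gamma\xi$. Using the resulting identity to eliminate the cross term $\int (u_1+u_2) w\psi\xi^2$ from the $w$-identity costs only the permitted source $\int|\delta m_c\psi|\xi^2$ and the remainder $\int(1-\eta)|w\psi|\xi^2 \leq \tfrac{1}{2}\int_{\{\eta<1\}}(w^2+\psi^2)\xi^2$, while transferring $\int\eta|\nabla\psi|^2\xi^2$ to the LHS with a positive coefficient. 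Finally, $\int|\nabla w|^2\xi^2$ is recovered from the sum of squares via $|\nabla w|^2\leq 2|\nabla w-(w/u_i)\nabla u_i|^2 + C w^2$ (using $|\nabla u_i|/u_i\leq C$ from Proposition~\ref{propbounds}). Choosing $\gamma = \gamma(\rho, M, \omega_0)$ small enough to absorb all $\gamma$-proportional contributions yields the stated estimate. The key difficulty throughout lies in the third paragraph---converting a pointwise-singular Coulomb integral into a sum of squares plus a bounded quadratic by using only the TFW structure and the lower bound $u_i\geq c > 0$.
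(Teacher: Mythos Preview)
Your approach is correct in spirit and closely parallels the paper's, with one repairable gap. Both arguments test the $w$-equation against $w\xi^2$ and the $\psi$-equation against $\eta\psi\xi^2$; the only substantive difference is how the singular term $\int\frac{\phi_1+\phi_2}{2}w^2\xi^2$ is handled. The paper packages it into the operator $L=-\Delta+\frac{5}{6}(u_1^{4/3}+u_2^{4/3})-\frac{1}{2}(\phi_1+\phi_2)$, invokes the abstract nonnegativity of $L$ on $H^1(\mathbb R^3)$ (Lemma~\ref{lemmapositiveoperator}) to get $\langle w\xi,L(w\xi)\rangle\geq 0$, and then recovers $\int|\nabla w|^2\xi^2$ by writing $L=-\Delta+a$ and controlling $\int|a|w^2\xi^2$ via Gagliardo--Nirenberg using only $a\in L^2_{\mathrm{uloc}}$. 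Your substitution $\phi_i=-\Delta u_i/u_i+\frac{5}{3}u_i^{4/3}$ followed by integration by parts is precisely the ground-state representation that underlies Lemma~\ref{lemmapositiveoperator}; the sum of squares $\frac{1}{2}\sum_i\int|\nabla w-(w/u_i)\nabla u_i|^2\xi^2$ is the explicit form of $\langle w\xi,L(w\xi)\rangle$ (up to $\nabla\xi$-errors), and your numerical inequality $\frac{35}{9\cdot 2^{4/3}}>\frac{5}{6}$ replaces the paper's $\nu>0$. So the two routes are essentially equivalent, yours being more hands-on.

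The gap is in your last step. You recover $\int|\nabla w|^2\xi^2$ from the sum of squares via $|\nabla w|^2\leq 2|\nabla w-(w/u_i)\nabla u_i|^2+Cw^2$, invoking a pointwise bound $|\nabla u_i|/u_i\leq C$ that you attribute to Proposition~\ref{propbounds}. That proposition, however, only yields $u_i\in H^2_{\mathrm{uloc}}(\mathbb R^3)$, which in dimension three does \emph{not} give $\nabla u_i\in L^\infty$; near the point charges the Lipschitz bound on $u_i$ is not available from the stated estimates. The repair is immediate and mirrors the paper: from $u_i\in H^2_{\mathrm{uloc}}$ one does have $|\nabla u_i|^2\in L^3_{\mathrm{uloc}}$ (even $L^2_{\mathrm{uloc}}$ suffices), so the residual term $\int (w/u_i)^2|\nabla u_i|^2\xi^2\leq c^{-2}\sum_{z\in\mathbb Z^3}\||\nabla u_i|^2\|_{L^2(B_1(z))}\|w\xi\|_{L^4(B_1(z))}^2$ can be handled by exactly the Gagliardo--Nirenberg/Young absorption the paper uses for $\int|a|w^2\xi^2$. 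With that replacement your argument closes.
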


\begin{proof}
Following the argumentation in \cite{NO17}, we have
\begin{align}
-\Delta w &= \frac53 \Big(u_2^\frac73 - u_1^\frac73\Big) + \phi_1 u_1 - \phi_2 u_2, \label{eqw} \\
-\Delta \psi &= 4\pi (u_2^2 - u_1^2) + 4\pi \,\delta m, \label{eqpsi}
\end{align}
and test \eqref{eqw} with $w\xi^2$ (note that by $w \in H^1_{\textrm{uloc}}(\mathbb R^3)$ and the exponential decay of $\xi$ and $\nabla \xi$, $w\xi^2$ is indeed an admissible test function). This yields 
\[
\int_{\mathbb R^3} \nabla w \cdot \nabla (w \xi^2) \, dx + \frac53 \int_{\mathbb R^3} \Big(u_1^\frac73 - u_2^\frac73\Big) w\xi^2 \, dx - \int_{\mathbb R^3} (\phi_1 u_1 - \phi_2 u_2) w\xi^2 \, dx = 0.
\]
The elementary estimate 
\[
\big(u_1^\frac73 - u_2^\frac73\big) (u_1 - u_2) = \Big(u_1^\frac43 + u_2^\frac43\Big) w^2 + u_1 u_2 (u_1^\frac13 - u_2^\frac13) w \geq \frac12 \Big(u_1^\frac43 + u_2^\frac43\Big) w^2 + \nu w^2
\]
with $\nu \coleq \frac12 \inf_{\mathbb R^3} \big( u_1^\frac43 + u_2^\frac43 \big) > 0$ only depending on $\rho$, $M$, and $\omega_0$, as well as the identities
\begin{align*}
\phi_1 u_1 - \phi_2 u_2 &= \frac{\phi_1 + \phi_2}{2} w + \frac{u_1 + u_2}{2} \psi \\
\nabla w \cdot \nabla (w\xi^2) &= |\nabla(w\xi)|^2 - w^2 |\nabla \xi|^2
\end{align*}
give rise to 
\begin{align*}
&\int_{\mathbb R^3} |\nabla(w\xi)|^2 \, dx + \frac56 \int_{\mathbb R^3} \Big(u_1^\frac43 + u_2^\frac43\Big) w^2 \xi^2 \, dx
\\&
- \frac12 \int_{\mathbb R^3}  (\phi_1 + \phi_2) w^2 \xi^2 \, dx + \nu \int_{\mathbb R^3} w^2 \xi^2 \, dx
\\&
\leq \int_{\mathbb R^3} w^2 |\nabla \xi|^2 \, dx + \frac12 \int_{\mathbb R^3} (u_1 + u_2) \psi w \xi^2 \, dx.
\end{align*}
Now, consider the operators $L_i \coleq -\Delta + \frac53 u_i^\frac43 - \phi_i$ for $i \in \{1,2\}$ and 
\begin{equation}
\label{eqdefl}
L \coleq -\Delta + a, \qquad a \coleq \frac56 \Big( u_1^\frac43 + u_2^\frac43 \Big) - \frac12 (\phi_1 + \phi_2) \in L^2_\mathrm{uloc}(\mathbb R^3)
\end{equation}
(the latter inclusion holding by $u_i\in H^2_\mathrm{uloc}(\mathbb R^3)$).
Due to Lemma \ref{lemmapositiveoperator}, $L_1$, $L_2$, and hence $L$ are non-negative operators on $H^1(\mathbb R^3)$. 
In fact, $w\xi \in H^1(\mathbb R^3)$, $\langle w \xi, L(w \xi) \rangle \geq 0$ and 
\begin{align*}
\langle w \xi, L(w \xi) \rangle + \nu \int_{\mathbb R^3} w^2 \xi^2 \, dx \leq \int_{\mathbb R^3} w^2 |\nabla \xi|^2 \, dx + \frac12 \int_{\mathbb R^3} (u_1 + u_2) \psi w \xi^2 \, dx.
\end{align*}
We continue by testing \eqref{eqpsi} with $\eta \psi \xi^2 \in H^1(\mathbb R^3)$, which results in
\begin{align*}
\int_{\mathbb R^3} \nabla \psi \cdot \nabla (\eta \psi \xi^2) = -4 \pi \int_{\mathbb R^3} \eta (u_1 + u_2) \psi w \xi^2 \, dx + 4 \pi \int_{\mathbb R^3} \eta \, \delta m_c \, \psi \xi^2 \, dx.
\end{align*}
The representation 
\begin{align*}
\nabla \psi \cdot \nabla (\eta \psi \xi^2) = \eta |\nabla(\psi \xi)|^2 - \eta \psi^2 |\nabla \xi|^2 + \psi \xi^2 \nabla \psi \cdot \nabla \eta
\end{align*}
leads one to
\begin{multline*}
\frac12 \int_{\mathbb R^3} \eta (u_1 + u_2) \psi w \xi^2 \, dx = -\frac{1}{8\pi} \int_{\mathbb R^3} \nabla \psi \cdot \nabla (\eta \psi \xi^2) \, dx + \frac12 \int_{\mathbb R^3} \eta \, \delta m_c \, \psi \xi^2 \, dx \\
= -\frac{1}{8\pi} \int_{\mathbb R^3} \eta |\nabla(\psi \xi)|^2 \, dx + \frac{1}{8\pi} \int_{\mathbb R^3} \eta \psi^2 |\nabla \xi|^2 \, dx \\
- \frac{1}{8\pi} \int_{\mathbb R^3} \psi \xi^2 \nabla \psi \cdot \nabla \eta \, dx + \frac12 \int_{\mathbb R^3} \eta \, \delta m_c \, \psi \xi^2 \, dx
\end{multline*}
and, thus, 
\begin{multline*}
\langle w \xi, L(w \xi) \rangle + \nu \int_{\mathbb R^3} w^2 \xi^2 \, dx + \frac{1}{8\pi} \int_{\mathbb R^3} \eta |\nabla(\psi \xi)|^2 \, dx \\
\leq \int_{\mathbb R^3} w^2 |\nabla \xi|^2 \, dx + \frac{1}{8\pi} \int_{\mathbb R^3} \psi^2 |\nabla \xi|^2 \, dx - \frac{1}{8\pi} \int_{\mathbb R^3} \psi \xi^2 \nabla \psi \cdot \nabla \eta \, dx \\
+ \frac12 \int_{\mathbb R^3} (1 - \eta) (u_1 + u_2) \psi w \xi^2 \, dx + \frac12 \int_{\mathbb R^3} \eta \, \delta m_c \, \psi \xi^2 \, dx.
\end{multline*}
By estimating $\eta |\nabla \psi|^2 \xi^2 \leq 2 \eta |\nabla(\psi \xi)|^2 + 2 \eta \psi^2 |\nabla \xi|^2$, by applying an absorption argument together with $|\nabla \eta|^2 \leq c_\eta(\rho) \eta$ to $\nabla \psi$ on the right hand side, and by employing the uniform $L^\infty$-bound for $u_1$ and $u_2$ from Proposition \ref{propbounds}, we obtain
\begin{multline}
\label{eqboundw2}
\langle w \xi, L(w \xi) \rangle + \int_{\mathbb R^3} (w^2 + \eta |\nabla \psi|^2) \xi^2 \, dx \\
\leq C \bigg( \int_{\mathbb R^3} (w^2 + \psi^2) |\nabla \xi|^2 \, dx + \int_{\{\eta < 1\}} (w^2 + \psi^2) \xi^2 \, dx + \int_{\mathbb R^3} |\delta m_c \, \psi| \xi^2 \, dx \bigg)
\end{multline}
As $L = -\Delta + a$ with $a \in L^2_{\mathrm{uloc}}(\mathbb R^3)$ defined in \eqref{eqdefl} and $|\nabla w|^2 \xi^2 \leq 2 |\nabla(w \xi)|^2 + 2 w^2 |\nabla \xi|^2$, we derive
\begin{multline}
\label{eqboundnablaw2}
\int_{\mathbb R^3} |\nabla w|^2 \xi^2 \, dx + \int_{\mathbb R^3} (w^2 + \eta |\nabla \psi|^2) \xi^2 \, dx \leq 2 \int_{\mathbb R^3} |a|w^2\xi^2 \, dx \\
+ C \bigg( \int_{\mathbb R^3} (w^2 + \psi^2) |\nabla \xi|^2 \, dx + \int_{\{\eta < 1\}} (w^2 + \psi^2) \xi^2 \, dx + \int_{\mathbb R^3} |\delta m_c \, \psi | \xi^2 \, dx \bigg).
\end{multline}
In order to handle the $|a|w^2\xi^2$-term, we bound the integral over $\mathbb R^3$ by the sum over all integrals over all balls of radius $1$ located at points with integer coordinates. After applying a Gagliardo--Nirenberg-estimate and Young's inequality, we again arrive at (a multiple of) an integral over $\mathbb R^3$ as every $x \in \mathbb R^3$ can belong to at most eight unit balls around integer points:
\begin{align*}
\int_{\mathbb R^3} &|a|w^2\xi^2 \, dx \leq \sum_{x \in \mathbb Z^3} \| a \|_{L^2(B_1(x))} \| w \xi \|_{L^4(B_1(x))}^2 \\ 
&\leq \sum_{x \in \mathbb Z^3} C \| a \|_{L^2_\mathrm{uloc}(\mathbb R^3)} \| w \xi \|_{L^2(B_1(x))}^\frac12 \| w \xi \|_{H^1(B_1(x))}^\frac32 \\ 
&\leq \sum_{x \in \mathbb Z^3} \Big( C(\tau) \| a \|_{L^2_\mathrm{uloc}(\mathbb R^3)}^4 \| w \xi \|_{L^2(B_1(x))}^2 + \frac{\tau}{8} \| w \xi \|_{H^1(B_1(x))}^2 \Big) \\
&\leq \Big( C(\tau) \| a \|_{L^2_\mathrm{uloc}(\mathbb R^3)}^4 + \tau \Big) \int_{\mathbb R^3} w^2 \xi^2 \, dx + \tau \int_{\mathbb R^3} |\nabla w|^2 \xi^2 \, dx + \tau \int_{\mathbb R^3} w^2 |\nabla \xi|^2 \, dx.
\end{align*}
We now choose $\tau > 0$ --- arising from Young's inequality --- in such a way that $\int_{\mathbb R^3} |\nabla w|^2 \xi^2 \, dx$ can be absorbed on the left hand side of \eqref{eqboundnablaw2}. As $L$ is non-negative, the right hand side of \eqref{eqboundw2} already serves as an upper bound for $\int_{\mathbb R^3} w^2 \xi^2 \, dx$. As a consequence, the claim of the lemma follows.
\end{proof}

The next lemma establishes an $L^2$-bound for $\psi$, and at the same time improves the bound on the $L^2$-norm of $w$, $\nabla w$, and $\sqrt{\eta} \nabla \psi$.

\begin{lemma}
\label{lemmapsi}
Let Assumption \ref{assumplocality} be satisfied. Then, there exist positive constants $C = C(\rho, M, \omega_0) > 0$ and $\gamma = \gamma(\rho, M, \omega_0) > 0$ such that
\begin{align*}
&\int_{\mathbb R^3} (w^2 + |\nabla w|^2 + \psi^2 + \eta |\nabla \psi|^2) \xi^2 \, dx
\\&
\leq C \left( \int_{\{\eta < 1\}} (w^2 + \psi^2) \xi^2 \, dx + \int_{\mathbb R^3} (\delta m_c)^2 \xi^2 \, dx \right)
\end{align*}
where $\eta$ and $\xi$ (depending on $\gamma$) are defined in Assumption \ref{assumplocality}.
\end{lemma}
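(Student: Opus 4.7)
The plan is to upgrade Lemma~\ref{lemmaw} by an additional weighted $L^2$ estimate on $\psi$. The key observation is that by using the first TFW equation~\eqref{eqw} rather than the Poisson equation~\eqref{eqpsi} (whose source carries the Dirac masses $\delta c_x\delta_x$ at points of $\mathbb P'$), $\psi$ can be expressed algebraically through $w$ and $\Delta w$. A mean-value argument $u_1^{7/3}-u_2^{7/3}=\tfrac{7}{3}\tilde u^{4/3}w$ (with $\tilde u$ between $u_1$ and $u_2$) combined with the decomposition $\phi_1 u_1-\phi_2 u_2=\tfrac{\phi_1+\phi_2}{2}w+\tfrac{u_1+u_2}{2}\psi$ turns~\eqref{eqw} into
\begin{equation*}
\tfrac{u_1+u_2}{2}\,\psi \;=\; -\Delta w + a_0\, w, \qquad a_0 \coleq \tfrac{35}{9}\,\tilde u^{4/3} - \tfrac{\phi_1+\phi_2}{2} \in L^2_\mathrm{uloc}(\mathbb R^3),
\end{equation*}
with $(u_1+u_2)/2$ bounded below by a positive constant depending only on $\rho,M,\omega_0$ (Proposition~\ref{propbounds}).

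I would then test this identity against $\eta\psi\xi^2$, which is a legitimate $H^1(\mathbb R^3)$ function since the cutoff $\eta$ of Notation~\ref{noteta} kills the Coulomb singularities of $\psi$ at $\mathbb P'$ super-polynomially and $\xi^2$ provides exponential decay at infinity. Integration by parts on the $-\Delta w$-contribution produces, besides the coercive term $\int \tfrac{u_1+u_2}{2}\eta\psi^2\xi^2$ on the left, four kinds of right-hand-side contributions: $\int \eta\nabla w\cdot\nabla\psi\,\xi^2$, $\int \psi\nabla w\cdot\nabla\eta\,\xi^2$, $\int \eta\psi\xi\,\nabla w\cdot\nabla\xi$, and the quasi-linear term $\int a_0\,\eta\psi w\,\xi^2$. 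The first three are handled by Young's inequality together with $|\nabla\eta|^2\leq c_\eta(\rho)\eta$ (so that $\nabla\eta$ is bounded and supported in $\{\eta<1\}$) and $|\nabla\xi|\leq\gamma\xi$. The last one, involving the rough coefficient $a_0$, is treated by the same Gagliardo--Nirenberg/Young splitting as at the end of the proof of Lemma~\ref{lemmaw}, applied to $\sqrt\eta\,\psi\xi$ viewed as the base $H^1(\mathbb R^3)$-function (note that $\sqrt\eta$ already dominates the Coulomb singularities of $\psi$, so $\sqrt\eta\,\psi\xi\in H^1(\mathbb R^3)$). Collecting all contributions and using $\int\psi^2\xi^2\leq \int\eta\psi^2\xi^2+\int_{\{\eta<1\}}\psi^2\xi^2$ yields a weighted $L^2$ bound on $\psi$ in terms of $\int(w^2+|\nabla w|^2+\eta|\nabla\psi|^2)\xi^2$, $\int_{\{\eta<1\}}(w^2+\psi^2)\xi^2$, and $\int(w^2+\psi^2)|\nabla\xi|^2$.

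To conclude, I would add this inequality to the bound from Lemma~\ref{lemmaw}. The inequality $|\nabla\xi|\leq\gamma\xi$ rewrites each $\int(\cdot)|\nabla\xi|^2\leq\gamma^2\int(\cdot)\xi^2$, which can be absorbed on the left after choosing $\gamma$ sufficiently small (in terms of $\rho,M,\omega_0$). The term $\int|\delta m_c\,\psi|\xi^2$ inherited from Lemma~\ref{lemmaw} is split by Young's inequality into $\epsilon\int\psi^2\xi^2+C_\epsilon\int(\delta m_c)^2\xi^2$, and the $\psi^2$-piece is absorbed as well.

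The main technical obstacle is the simultaneous presence of two distinct low-regularity features. On one hand, $\psi$ itself has Coulomb singularities at $\mathbb P'$; these are only partially controlled by $\eta$, which is why the right-hand-side contribution $\int_{\{\eta<1\}}\psi^2\xi^2$ is unavoidable and why the test function must carry the factor $\eta$ rather than just $\psi\xi^2$. On the other hand, the coefficient $a_0$ lacks any $L^\infty$-bound because $\phi_1,\phi_2$ have Coulomb singularities at \emph{every} nucleus of $\mathbb P_1\cup\mathbb P_2$, not merely at those in $\mathbb P'$; these additional singularities cannot be absorbed by $\eta$ and force one to transplant the Gagliardo--Nirenberg/Young mechanism from the proof of Lemma~\ref{lemmaw}, which is what ultimately permits the coercive term $\int\tfrac{u_1+u_2}{2}\eta\psi^2\xi^2$ on the left to swallow the $a_0$-induced contributions.
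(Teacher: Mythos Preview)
Your proposal is correct and follows essentially the same route as the paper: both rewrite \eqref{eqw} so that $\tfrac{u_1+u_2}{2}\psi$ appears as a source, test against $\eta\psi\xi^2$, integrate by parts on $-\Delta w$, treat the rough coefficient involving $\phi_1+\phi_2$ by a localized Gagliardo--Nirenberg/Young splitting, and then combine with Lemma~\ref{lemmaw} and absorb via $|\nabla\xi|\leq\gamma\xi$ for $\gamma$ small. The only cosmetic differences are that the paper keeps the $u^{4/3}$-part and the $\phi$-part of your $a_0$ separate (the former is trivially handled by the $L^\infty$ bound on $u_i$, so only the $\phi$-part actually needs Gagliardo--Nirenberg), and the paper applies the $L^4$ embedding to $\eta\psi\xi$ rather than to $\sqrt{\eta}\,\psi\xi$; both choices work since $|\nabla\sqrt{\eta}|$ is bounded by~\eqref{eqestimateeta}.
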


\begin{proof}
We rewrite \eqref{eqw} as
\[
-\Delta w + \frac53 \Big(u_1^\frac73 - u_2^\frac73\Big) - \frac{\phi_1 + \phi_2}{2} w = \frac{u_1 + u_2}{2} \psi
\]
and test with $\eta \psi \xi^2 \in H^1(\mathbb R^3)$. This gives
\begin{multline}
\label{eqpsi2}
\int_{\mathbb R^3} \frac{u_1 + u_2}{2} \eta \psi^2 \xi^2 \, dx \\
= \int_{\mathbb R^3} \nabla w \cdot \nabla (\eta \psi \xi^2) \, dx + \frac53 \int_{\mathbb R^3} \Big(u_1^\frac73 - u_2^\frac73\Big) \eta \psi \xi^2 \, dx - \int_{\mathbb R^3} \frac{\phi_1 + \phi_2}{2} \eta w \psi \xi^2 \, dx.
\end{multline}
Considering the first term on the right hand side of \eqref{eqpsi2}, we obtain
\begin{align*}
&\Big| \int_{\mathbb R^3} \nabla w \cdot \nabla (\eta \psi \xi^2) \, dx \Big| \\
&\leq \Big| \int_{\mathbb R^3} \nabla w \cdot \nabla \eta \; \psi \xi^2 \, dx \Big| + \Big| \int_{\mathbb R^3} \nabla w \cdot \nabla \psi \; \eta \xi^2 \, dx \Big| + 2 \Big| \int_{\mathbb R^3} \nabla w \cdot \nabla \xi \; \eta \psi \xi \, dx \Big|
\\
&\leq \Big( \int_{\mathbb R^3} |\nabla w|^2 \xi^2 \, dx \Big)^\frac12 \Big( \int_{\mathbb R^3} |\nabla \eta|^2 \psi^2 \xi^2 \, dx \Big)^\frac12
\\&\qquad
+ \Big( \int_{\mathbb R^3} |\nabla w|^2 \xi^2 \, dx \Big)^\frac12 \Big( \int_{\mathbb R^3} |\nabla \psi|^2 \eta^2 \xi^2 \, dx \Big)^\frac12
\\
&\qquad + 2 \Big( \int_{\mathbb R^3} |\nabla w|^2 \xi^2 \, dx \Big)^\frac12 \Big( \int_{\mathbb R^3} |\nabla \xi|^2 \eta^2 \psi^2 \, dx \Big)^\frac12.
\end{align*}
Taking $|\nabla \xi| \leq \xi$ and $|\nabla \eta|^2 \leq c_\eta(\rho) \eta$ into account, we obtain for any $\tau>0$ that there exists a constant $C(\tau) > 0$ such that
\begin{equation}
\label{eqpsi2term1}
\Big| \int_{\mathbb R^3} \nabla w \cdot \nabla (\eta \psi \xi^2) \, dx \Big| \leq C(\tau) \int_{\mathbb R^3} ( |\nabla w|^2 + \eta |\nabla \psi|^2 ) \xi^2 \, dx + \tau \int_{\mathbb R^3} \psi^2 \xi^2 \, dx.
\end{equation}
The next term in \eqref{eqpsi2} can be controlled for any $\tau>0$ using the $L^\infty$-bounds on $u_1$ and $u_2$ from Proposition \ref{propbounds} via
\begin{align}
&\frac53 \int_{\mathbb R^3} \Big(u_1^\frac73 - u_2^\frac73\Big) \eta \psi \xi^2 \, dx \label{eqpsi2term2} \\
&\qquad \qquad \leq \int_{\mathbb R^3} \eta |w| |\psi| \xi^2 \, dx \leq C(\tau) \int_{\mathbb R^3} w^2 \xi^2 \, dx + \tau \int_{\mathbb R^3} \psi^2 \xi^2 \, dx \nonumber
\end{align}
with some $C=C(\tau,M) > 0$. We proceed by estimating the last expression in \eqref{eqpsi2} using also Proposition~\ref{propbounds} as
\begin{align}
&\bigg|\int_{\mathbb R^3} \frac{\phi_1 + \phi_2}{2} \eta w \psi \xi^2 \, dx\bigg| 
\leq
\sum_{x \in \mathbb Z^3}
\bigg|\int_{B_1(x)} \frac{\phi_1 + \phi_2}{2} \eta w \psi \xi^2 \, dx
\bigg|
\label{eqpsi2term3}
\\
&\leq \sum_{x \in \mathbb Z^3} \Big\| \frac{\phi_1 + \phi_2}{2} \Big\|_{L^2_{\mathrm{uloc}}(\mathbb R^3)} \| w \xi \|_{L^4(B_1(x))} \| \eta \psi \xi \|_{L^4(B_1(x))} \nonumber \\
&\leq C \sum_{x \in \mathbb Z^3} \Big( \| w\xi \|_{L^2(B_1(x))} + \| \xi \nabla w \|_{L^2(B_1(x))} + \| w \nabla \xi \|_{L^2(B_1(x))} \Big) \nonumber \\
&\quad \Big( \| \eta \psi \xi \|_{L^2(B_1(x))} + \| \psi \xi \nabla \eta \|_{L^2(B_1(x))} + \| \eta \xi \nabla \psi \|_{L^2(B_1(x))} + \| \eta \psi \nabla \xi \|_{L^2(B_1(x))} \Big) \nonumber \\
&\leq \sum_{x \in \mathbb Z^3} \bigg[ C \Big( \| w\xi \|_{L^2(B_1(x))}^2 + \| \xi \nabla w \|_{L^2(B_1(x))}^2 + \| w \nabla \xi \|_{L^2(B_1(x))}^2 \Big) \nonumber \\
&~~ + \frac{\tau}{8} \Big( \| \eta \psi \xi \|_{L^2(B_1(x))}^2 + \| \psi \xi \nabla \eta \|_{L^2(B_1(x))}^2 + \| \eta \xi \nabla \psi \|_{L^2(B_1(x))}^2 + \| \eta \psi \nabla \xi \|_{L^2(B_1(x))}^2 \Big) \bigg] \nonumber \\
&\leq 8C \Big( \int_{\mathbb R^3} w^2 \xi^2 \, dx + \int_{\mathbb R^3} |\nabla w|^2 \xi^2 \, dx + \int_{\mathbb R^3} w^2 |\nabla \xi|^2 \, dx \Big) \nonumber \\
&\quad + \tau \Big( \int_{\mathbb R^3} \eta^2 \psi^2 \xi^2 \, dx + \int_{\mathbb R^3} |\nabla \eta|^2 \psi^2 \xi^2 \, dx + \int_{\mathbb R^3} \eta^2 |\nabla \psi|^2 \xi^2 \, dx + \int_{\mathbb R^3} \eta^2 \psi^2 |\nabla \xi|^2 \, dx \Big) \nonumber \\
&\leq 16C \int_{\mathbb R^3} (w^2 + |\nabla w|^2) \xi^2 \, dx + (2 + c_\eta) \tau \int_{\mathbb R^3} \psi^2 \xi^2 \, dx + \tau \int_{\mathbb R^3} \eta |\nabla \psi|^2 \xi^2 \, dx \nonumber
\end{align}
where $\tau > 0 $ will be chosen sufficiently small. Returning to $\int_{\mathbb R^3} \psi^2 \xi^2 \, dx$, we make use of the lower bounds $\inf_{\mathbb R^3} u_i > 0$, $i \in \{1,2\}$, from Theorem \ref{theoreminf} and rewrite
\begin{align*}
\int_{\mathbb R^3} \psi^2 \xi^2 \, dx &\leq C \int_{\mathbb R^3} \frac{u_1 + u_2}{2} \psi^2 \xi^2 \, dx \\&
\leq C \int_{\{\eta < 1\}} \frac{u_1 + u_2}{2} \psi^2 \xi^2 \, dx + C \int_{\mathbb R^3} \frac{u_1 + u_2}{2} \eta \psi^2 \xi^2 \, dx
\end{align*}
with constants $C(\rho, M, \omega_0) > 0$. We now combine 
\eqref{eqpsi2}--\eqref{eqpsi2term3}, and find
\begin{align*}
&\int_{\mathbb R^3} \psi^2 \xi^2 \, dx
\\&
\leq C \Big( \int_{\{\eta < 1\}} \psi^2 \xi^2 \, dx + \int_{\mathbb R^3} (w^2 + |\nabla w|^2 + \eta |\nabla \psi|^2 ) \xi^2 \, dx \Big) + C \tau \int_{\mathbb R^3} \psi^2 \xi^2 \, dx
\end{align*}
where $\tau>0$ can be chosen arbitrarily small. Thanks to Lemma \ref{lemmaw}, we arrive at
\begin{align*}
&\int_{\mathbb R^3} (w^2 + |\nabla w|^2 + \psi^2 + \eta |\nabla \psi|^2) \xi^2 \, dx \\&
\leq C \Big( \int_{\mathbb R^3} (w^2 + \psi^2) |\nabla \xi|^2 \, dx + \int_{\{\eta < 1\}} (w^2 + \psi^2) \xi^2 \, dx + \int_{\mathbb R^3} (\delta m_c)^2 \xi^2 \, dx \Big)
\\&~~~~~
+ C \tau \int_{\mathbb R^3} \psi^2 \xi^2 \, dx
\end{align*}
with $\tau>0$ arbitrary. If we set both constants $\tau$ (from above) and $\gamma$ (from the definition of $\xi \coleq e^{-\gamma | \cdot - \, y |}$) to sufficiently small values, we may absorb $\int_{\mathbb R^3} \psi^2 \xi^2 \, dx$ and $\int_{\mathbb R^3} (w^2 + \psi^2) |\nabla \xi|^2 \, dx$ on the left hand side due to $|\nabla \xi| \leq \gamma \xi$, which entails the desired estimate.
\end{proof}

We are now in position to prove our locality result  Theorem~\ref{theorempartialijpsi}.
\begin{proof}[Proof of Theorem~\ref{theorempartialijpsi}]
In view of Lemma~\ref{lemmapsi}, for proving Theorem \ref{theorempartialijpsi} it suffices to show that the bound on the $L^2$-norm of $w$, $\nabla w$, $\psi$, and $\nabla \psi$ from Lemma \ref{lemmapsi} also serves as an upper bound for the $L^2$-norm of the second order partial derivatives $\partial_{ij} \psi$.

We first establish a bound for $\int_{\mathbb R^3} \eta |\Delta \psi|^2 \xi^2 \, dx$. From \eqref{eqpsi}, we derive
\begin{align*}
\int_{\mathbb R^3} \eta |\Delta \psi|^2 \xi^2 \, dx = 4 \pi \int_{\mathbb R^3} \eta (u_1 + u_2) w \Delta \psi \; \xi^2 \, dx - 4 \pi \int_{\mathbb R^3} \eta\, \delta m_c\, \Delta \psi \; \xi^2 \, dx.
\end{align*}
Using Young's inequality and absorption as well as the bounds from Proposition~\ref{propbounds} and Lemma \ref{lemmapsi},
we arrive at
\begin{equation}
\label{eqlaplacepsi}
\int_{\mathbb R^3} \eta |\Delta \psi|^2 \xi^2 \, dx \leq C \Big( \int_{\{\eta < 1\}} (w^2 + \psi^2) \xi^2 \, dx + \int_{\mathbb R^3} (\delta m_c)^2 \xi^2 \, dx \Big).
\end{equation}
We will now employ integration by parts to arrive at
\begin{align*}
&\sum_{i,j} \int_{\mathbb R^3} \eta^2 |\partial_{ij} \psi|^2 \xi^2 \, dx \\
&= \sum_{i,j} \Big( -2 \int_{\mathbb R^3} \eta \partial_j \eta \partial_i \psi \partial_{ij} \psi \xi^2 \, dx - \int_{\mathbb R^3} \eta^2 \partial_i \psi \partial_{ijj} \psi \xi^2 \, dx - 2 \int_{\mathbb R^3} \eta^2 \partial_i \psi \partial_{ij} \psi \xi \partial_j \xi \, dx \Big) \\
&= \sum_{i,j} \Big( -2 \int_{\mathbb R^3} \eta \partial_j \eta \partial_i \psi \partial_{ij} \psi \xi^2 \, dx + 2 \int_{\mathbb R^3} \eta \partial_i \eta \partial_i \psi \partial_{jj} \psi \xi^2 \, dx \\
&\qquad + \int_{\mathbb R^3} \eta^2 \partial_{ii} \psi \partial_{jj} \psi \xi^2 \, dx + 2 \int_{\mathbb R^3} \eta^2 \partial_i \psi \partial_{jj} \psi \xi \partial_i \xi \, dx - 2 \int_{\mathbb R^3} \eta^2 \partial_i \psi \partial_{ij} \psi \xi \partial_j \xi \, dx \Big).
\end{align*}
We utilize the bounds $|\nabla \xi| \leq \xi$ and $|\nabla \eta|^2 \leq c_\eta(\rho) \eta$ and continue as
\begin{align*}
&\sum_{ij} \int_{\mathbb R^3} \eta^2 |\partial_{ij} \psi|^2 \xi^2 \, dx \\
&\quad \leq C \| \eta^\frac12 \xi \nabla \psi \|_{L^2(\mathbb R^3)} \Big( \sum_{ij} \| \eta \xi \partial_{ij} \psi \|_{L^2(\mathbb R^3)} + \| \eta \xi \Delta \psi \|_{L^2(\mathbb R^3)} \Big) + \int_{\mathbb R^3} \eta |\Delta \psi|^2 \xi^2 \, dx \\
&\quad \leq C \Big( \int_{\mathbb R^3} \eta |\nabla \psi|^2 \xi^2 \, dx + \int_{\mathbb R^3} \eta |\Delta \psi|^2 \xi^2 \, dx \Big) + \frac12 \sum_{ij} \int_{\mathbb R^3} \eta^2 |\partial_{ij} \psi|^2 \xi^2 \, dx.
\end{align*}
The assertion of the lemma now follows from \eqref{eqlaplacepsi} and Lemma \ref{lemmapsi}.
\end{proof}

We finally establish the locality result for the TFW energy.

\begin{proof}[Proof of Corollary~\ref{prop2diff}]
The difference of the TFW energy for $m_1$ and $m_2$ is given by
\begin{align}
\nonumber
E_{Q_1}[m_1]-E_{Q_1}[m_2]
=&\int_{Q_1} |\nabla u_1|^2 -  |\nabla u_2|^2 \,dx
\\&
\nonumber
+ \int_{Q_1} u_1^\frac{10}3 - u_2^\frac{10}3 \,dx
\\&
\label{eqtfwenergydifference}
+ \frac12 \int_{Q_1} (m_{c,1} - u_1^2) \phi_1 - (m_{c,2} - u_2^2) \phi_2 \, dx
\\&\nonumber
+ \sum_{x \in \mathbb P_1 \cap Q_1} c_{1,x} (\phi_1 - \phi_{1,x})(x)
- \sum_{x \in \mathbb P_2 \cap Q_1} c_{2,x} (\phi_2 - \phi_{2,x})(x).
\end{align}
We recall $\xi := e^{-\gamma | \cdot - \, y |}$, where $y := \dashint_{Q_1} x \, dx$ is the center of $Q_1$. We further recall the definition of $\eta$ from Notation \ref{noteta} and find using also Proposition~\ref{propbounds} and Lemma~\ref{lemmapsi}
\begin{align*}
&\Big| \int_{Q_1} ( |\nabla u_1|^2 - |\nabla u_2|^2 ) \, dx \Big| \leq C \int_{\mathbb R^3} \big| |\nabla u_1|^2 - |\nabla u_2|^2 \big| \xi^2 \, dx
\\
&\leq C \| (\nabla u_1 + \nabla u_2) \xi \|_{L^2(\mathbb R^3)} \| (\nabla u_1 - \nabla u_2) \xi \|_{L^2(\mathbb R^3)}
\\&
\leq C \Big( \int_{\{\eta < 1\}} (w^2 + \psi^2) \xi^2 \, dx + \int_{\supp(\delta m_c)} (\delta m_c)^2 \xi^2 \, dx \Big)^\frac 1 2
\\
&\leq C \Big( e^{-\gamma \, \dist(\{\eta < 1\}, Q_1)} \int_{\{\eta < 1\}} (w^2 + \psi^2) \xi \, dx
\\&\qquad\quad
+ e^{-\gamma \, \dist(\supp(\delta m_c), Q_1)} \int_{\supp(\delta m_c)} (\delta m_c)^2 \xi \, dx \Big)^\frac12 \\
&\leq C \Big( e^{2 \rho \gamma} e^{-\gamma \, \dist(\mathbb P', Q_1)} \int_{\{\eta < 1\}} (w^2 + \psi^2) \xi \, dx
\\&\qquad\quad
+ e^{-\gamma \, \dist(\supp(\delta m_c), Q_1)} \int_{\supp(\delta m_c)} (\delta m_c)^2 \xi \, dx \Big)^\frac12.
\end{align*}
Consequently, we have
\begin{align*}
&
\Big| \int_{Q_1} ( |\nabla u_1|^2 - |\nabla u_2|^2 ) \, dx \Big|
\\&
\leq C e^{-c \, \dist(\supp(m_1 - m_2), \, Q_1)} \Big( \int_{\mathbb R^3} (w^2 + \psi^2) \xi \, dx + \int_{\mathbb R^3} (\delta m_c)^2 \xi \, dx \Big)^\frac12,
\end{align*}
which (using also the bounds from Proposition~\ref{propbounds}) implies the desired estimate for the first term in \eqref{eqtfwenergydifference}. Concerning the second expression, we derive (using again Proposition~\ref{propbounds})
\[
\Big| \int_{Q_1} \big( u_1^\frac{10}3 - u_2^\frac{10}{3} \big) \, dx \Big| = \Big| \int_{Q_1} \frac{10}{3} v(x)^\frac73 \big( u_1(x) - u_2(x) \big) \, dx \Big| \leq C \bigg( \int_{Q_1} (u_1 - u_2)^2 \, dx \bigg)^\frac12
\]
where $v(x) \in [u_1(x), u_2(x)]$ for all $x \in Q_1$. As for the previous term, we obtain from Lemma \ref{lemmapsi}
\begin{multline*}
\Big| \int_{Q_1} \big( u_1^\frac{10}3 - u_2^\frac{10}{3} \big) \, dx \Big| \leq C \Big( \int_{\mathbb R^3} (u_1 - u_2)^2 \xi^2 \, dx \Big)^\frac12 \\
\leq C \Big( \int_{\{\eta < 1\}} (w^2 + \psi^2) \xi^2 \, dx + \int_{\supp(\delta m_c)} (\delta m_c)^2 \xi^2 \, dx \Big)^\frac12 \leq C e^{-c \, \dist(\supp(m_1 - m_2), \, Q_1)}.
\end{multline*}
The first part of the Coulomb energy in \eqref{eqtfwenergydifference} can be estimated via
\begin{multline*}
\Big| \int_{Q_1} \big( (m_{c,1} - u_1^2) \phi_1 - (m_{c,2} - u_2^2) \phi_2 \big) \, dx \Big| \\
\leq \int_{Q_1} \big( m_{c,1} |\phi_1 - \phi_2| + |\phi_2| |m_{c,1} - m_{c,2}| + u_1^2 |\phi_1 - \phi_2| + |\phi_2| |u_1^2 - u_2^2| \big) \, dx.
\end{multline*}
Due to $m_{c,i}, \phi_i \in L^2_\mathrm{uloc}(\mathbb R^3)$ and $u_i \in L^\infty(\mathbb R^3)$ (see Proposition~\ref{propbounds}), one concludes that
\begin{multline*}
\Big| \int_{Q_1} \big( (m_{c,1} - u_1^2) \phi_1 - (m_{c,2} - u_2^2) \phi_2 \big) \, dx \Big| \\
\leq C \Big( \| (\phi_1 - \phi_2) \xi \|_{L^2(\mathbb R^3)} + \| (m_{c,1} - m_{c,2}) \xi \|_{L^2(\mathbb R^3)} + \| (u_1 - u_2) \xi \|_{L^2(\mathbb R^3)} \Big)
\end{multline*}
and, hence, arrives at the desired bound by applying Lemma \ref{lemmapsi} and the same arguments as above. We are left to control the change of the Coulomb energy related to the atomic nuclei in \eqref{eqtfwenergydifference}, which we may bound by
\begin{multline*}
\Big| \sum_{x \in \mathbb P_1 \cap Q_1} c_{1,x} (\phi_1 - \phi_{1,x})(x) - \sum_{x \in \mathbb P_2 \cap Q_1} c_{2,x} (\phi_2 - \phi_{2,x})(x) \Big| \\
\leq \sum_{x \in (\mathbb P_1\cup \mathbb P_2) \cap Q_1} \big( |c_{1,x}| |(\phi_1 - \phi_{1,x} - \phi_2 + \phi_{2,x})(x)| + |(\phi_2 - \phi_{2,x})(x)| |c_{1,x} - c_{2,x}| \big).
\end{multline*}
In the case that $\dist(\supp(m_1 - m_2), \, Q_1) \leq 2\rho$, we observe that \eqref{eq2diff} holds true for the right hand side of the previous equation as it is bounded by a constant (due to the uniform bound on $\phi_i - \phi_{i,x}$) and as $e^{-c \, \dist(\supp(m_1 - m_2), \, Q_1)}$ is bounded from below by a positive constant. And if $\dist(\supp(m_1 - m_2), \, Q_1) > 2\rho$, we know that $\phi_1 - \phi_2 \in H^2(Q_1) \hookrightarrow C^{0,\frac12}(Q_1)$ and $\phi_{1,x} = \phi_{2,x}$ as well as $c_{1,x}=c_{2,x}$ for all $x \in (\mathbb P_1\cup \mathbb P_2) \cap Q_1$. As a consequence, in this case we have
\begin{align*}
&\Big| \sum_{x \in (\mathbb P_1 \cup \mathbb P_2) \cap Q_1} c_{1,x} (\phi_1 - \phi_{1,x})(x) - \sum_{x \in (\mathbb P_1 \cup \mathbb P_2) \cap Q_1} c_{2,x} (\phi_2 - \phi_{2,x})(x) \Big|
\\&
\leq C \sum_{x \in (\mathbb P_1 \cup \mathbb P_2) \cap Q_1} |c_{1,x}| |(\phi_1 - \phi_2)(x)| \leq C \| \phi_1 - \phi_2 \|_{H^2(Q_1)}
\\&
\leq C \bigg( \int_{Q_1} \Big( |\phi_1 - \phi_2|^2 + \eta |\nabla (\phi_1 - \phi_2)|^2 + \eta^2 \sum_{ij} |\partial_{ij} (\phi_1 - \phi_2)|^2 \Big) \, dx \bigg)^\frac12
\\&
\leq C \bigg( \int_{\mathbb R^3} \Big( |\phi_1 - \phi_2|^2 + \eta |\nabla (\phi_1 - \phi_2)|^2 + \eta^2 \sum_{ij} |\partial_{ij} (\phi_1 - \phi_2)|^2 \Big) \xi^2 \, dx \bigg)^\frac12
\end{align*}
and we may now employ Theorem \ref{theorempartialijpsi} and the bound from Proposition~\ref{propbounds} to finish the proof.
\end{proof}

\section{Uniform Bounds on Solutions to the TFW Equations}
\label{secsolution}

For our arguments we need uniform estimates on the solutions to the TFW equations which depend only on the parameters $\rho$, $M$, and $\omega_0$. For this reason, we repeat some of the calculations of \cite{CLBL98,NO17} to show that they do in fact yield uniform estimates.
The following lemma and its proof are based on similar considerations in \cite{CLBL98,NO17}. 
\begin{lemma}
\label{lemmapositiveoperator}
Let $a \in L^2_\mathrm{uloc}(\mathbb R^3)$ and suppose there exists some $u \in H^2_\mathrm{uloc}(\mathbb R^3)$ with $\inf_{B_R(0)} u > 0$ for all $R>0$ and $(-\Delta + a) u = 0$. Then, 
\[
\langle w, (-\Delta + a) w \rangle \coleq \int_{\mathbb R^3} (|\nabla w|^2 + aw^2) \, dx \geq 0
\]
for all $w \in H^1(\mathbb R^3)$.
\end{lemma}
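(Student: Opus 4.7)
The plan is the classical ground-state (Agmon/Allegretto--Piepenbrink) substitution, adapted to the uniformly local setting. First I would reduce to the case of compactly supported test functions: since $a \in L^2_{\mathrm{uloc}}(\mathbb{R}^3)$, the bilinear form $w \mapsto \int_{\mathbb{R}^3} |\nabla w|^2 + a w^2 \, dx$ is continuous on $H^1(\mathbb{R}^3)$. Indeed, by covering $\mathbb{R}^3$ by unit balls with bounded overlap and combining H\"older's inequality with the embedding $H^1(B_1) \hookrightarrow L^4(B_1)$ (or a Gagliardo--Nirenberg interpolation exactly as in the $|a|w^2\xi^2$ estimate in the proof of Lemma~\ref{lemmaw}), one gets $\int |a| w^2 \, dx \leq C \|a\|_{L^2_{\mathrm{uloc}}} \|w\|_{H^1(\mathbb{R}^3)}^2$, and the same decomposition shows continuity. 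It therefore suffices to prove the inequality for $w \in H^1(\mathbb{R}^3)$ with compact support, say $\supp w \subset B_R$, and then pass to the limit via density of $H^1_c(\mathbb{R}^3)$ in $H^1(\mathbb{R}^3)$.

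Next I would exploit the positivity of $u$. On $B_R$ we have $\inf_{B_R} u =: c_R > 0$ by hypothesis, and since $u \in H^2_{\mathrm{uloc}}(\mathbb{R}^3) \hookrightarrow L^\infty_{\mathrm{loc}}(\mathbb{R}^3)$ we also have $u \leq C_R$ on $B_R$. Hence $v \coleq w^2/u$ is well defined, supported in $B_R$, and a direct computation gives $\nabla v = 2(w/u)\nabla w - (w/u)^2 \nabla u \in L^2(\mathbb{R}^3)$, so $v \in H^1_c(\mathbb{R}^3)$ is an admissible test function in the weak formulation of $(-\Delta + a)u = 0$. Thus
\begin{equation*}
\int_{\mathbb{R}^3} \nabla u \cdot \nabla\!\left(\tfrac{w^2}{u}\right) + a\, u \cdot \tfrac{w^2}{u}\, dx \;=\; 0.
\end{equation*}
The key algebraic identity is obtained by writing $w = u \phi$ with $\phi = w/u$, expanding $|\nabla w|^2 = \phi^2 |\nabla u|^2 + 2 u \phi\, \nabla u\cdot\nabla\phi + u^2|\nabla\phi|^2$, and noting that $\nabla u\cdot\nabla(u\phi^2) = \phi^2|\nabla u|^2 + 2 u\phi\, \nabla u\cdot\nabla\phi$, which yields the pointwise identity
\begin{equation*}
|\nabla w|^2 \;=\; \nabla u \cdot \nabla\!\left(\tfrac{w^2}{u}\right) + u^2 \left|\nabla\!\left(\tfrac{w}{u}\right)\right|^2.
\end{equation*}
Integrating this identity over $\mathbb{R}^3$, adding $\int a w^2\,dx$ on both sides, and using the weak equation to cancel the cross term, I obtain
\begin{equation*}
\int_{\mathbb{R}^3} |\nabla w|^2 + a w^2 \, dx \;=\; \int_{\mathbb{R}^3} u^2 \left|\nabla\!\left(\tfrac{w}{u}\right)\right|^2 dx \;\geq\; 0.
\end{equation*}

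The main obstacle is purely bookkeeping: verifying that $v = w^2/u$ is indeed an admissible $H^1_c$ test function for the weak equation $(-\Delta+a)u = 0$ (this is where the hypothesis $\inf_{B_R} u > 0$ together with $u \in H^2_{\mathrm{uloc}}$ is used), and checking continuity of the quadratic form in $H^1(\mathbb{R}^3)$ so that the density extension from $H^1_c$ to $H^1$ is legitimate. Both steps are standard given $a \in L^2_{\mathrm{uloc}}(\mathbb{R}^3)$, and no uniform-in-$R$ lower bound on $u$ is required because the compact-support reduction localizes everything to a fixed ball.
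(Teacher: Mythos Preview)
Your approach is correct and genuinely different from the paper's. You use the classical ground-state (Agmon/Allegretto--Piepenbrink) substitution: for compactly supported $w$, test $(-\Delta+a)u=0$ against $w^2/u$ and invoke the pointwise identity $|\nabla w|^2 = \nabla u \cdot \nabla(w^2/u) + u^2|\nabla(w/u)|^2$ to obtain nonnegativity directly as $\int u^2|\nabla(w/u)|^2\,dx$. The paper instead regularizes $a$ to $a_\varepsilon \in C^\infty$, considers the principal Dirichlet eigenvalue $\lambda_\varepsilon$ of $-\Delta+a_\varepsilon$ on a ball $B_R$, and shows $\liminf_{\varepsilon\to 0}\lambda_\varepsilon \geq 0$ by pairing the eigenfunction equation against $u$ (using $u>0$ on $B_R$, $\partial_n v_\varepsilon \leq 0$ on $\partial B_R$, and $(-\Delta+a)u=0$); density then gives the claim. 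Your route is shorter and avoids both the mollification and the spectral detour; the paper's argument is more indirect but never needs to form $w^2/u$ or differentiate $1/u$, so it is slightly more robust to low regularity of $u$ (though here $u\in H^2_{\mathrm{uloc}}$ anyway).

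One minor technical point: for $w$ merely in $H^1_c(\mathbb{R}^3)$ your claim $\nabla v = 2(w/u)\nabla w - (w/u)^2\nabla u \in L^2$ is not immediate, since $w/u\in L^6$ and $\nabla w\in L^2$ only give $(w/u)\nabla w\in L^{3/2}$. The fix is painless: run the substitution for $w\in C_c^\infty(\mathbb{R}^3)$ (then $w/u\in L^\infty(B_R)$ and $v\in H^1_c$ is clear), and pass to general $w\in H^1(\mathbb{R}^3)$ via the $H^1$-continuity of the quadratic form that you already established.
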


\begin{proof}
We first note that $u \in L^\infty(\mathbb R^3) \cap C(\mathbb R^3)$ due to the uniform boundedness of $\|u\|_{H^2(B_1(x))}$ for every $x \in \mathbb R^3$. Regularizing $a \in L^2_\mathrm{uloc}(\mathbb R^3)$ (e.g. by convolution with some mollifier), we obtain a family of functions $a_\varepsilon \in C^\infty(\mathbb R^3)$ which converge for $\varepsilon \rightarrow 0$ to $a$ in $L^2(B_R)$ for any $R>0$. A standard result on differential operators \cite{GT01} ensures that the minimal eigenvalue $\lambda_\varepsilon$ of $-\Delta + a_\varepsilon$ as an operator on $B_R$ with Dirichlet boundary conditions is simple and is associated with a non-negative eigenfunction $v_\varepsilon \in H_0^1(B_R)$, $\|v_\varepsilon\|_{L^2(B_R)} = 1$. From the equation 
\begin{align*}
(-\Delta + a_\varepsilon) v_\varepsilon = \lambda_\varepsilon v_\varepsilon
\end{align*}
and elliptic regularity theory, we deduce $v_\varepsilon \in H^3(B_R) \hookrightarrow C^{1,\frac12}(\ol{B_R})$, hence $\nabla v_\varepsilon$ is well-defined and $\frac{\partial v_\varepsilon}{\partial n} \leq 0$ on $\partial B_R$. Moreover, it holds that
\begin{align*}
\lambda_\varepsilon = \int_{B_R} (|\nabla v_\varepsilon|^2 + a_\varepsilon v_\varepsilon^2) \, dx = \inf_{\genfrac{}{}{0pt}{2}{v \in H_0^1(B_R)}{\|v\|_{L^2(B_R)} = 1}} \int_{B_R} (|\nabla v|^2 + a_\varepsilon v^2) \, dx.
\end{align*}
We shall now prove that the eigenvalues $\lambda_\varepsilon$ are bounded for $\varepsilon \rightarrow 0$. For any fixed $v_\ast \in H^1_0(B_R)$ with $\|v_\ast\|_{L^2(B_R)} = 1$, we have $\lambda_\varepsilon \leq \int_{B_R} (|\nabla v_\ast|^2 + a_\varepsilon v_\ast^2) \, dx \leq \|v_\ast\|_{H^1(B_R)}^2 + C \|a_\varepsilon\|_{L^2(B_R)} \|v_\ast\|_{H^1(B_R)}^2$ where the last expression is bounded due to $a_\varepsilon \rightarrow a$ in $L^2(B_R)$. This yields an upper bound of the form $\lambda_\varepsilon \leq C$.
In addition, the equation $\int_{B_R} |\nabla v_\varepsilon|^2 \, dx = \lambda_\varepsilon - \int_{B_R} a_\varepsilon v_\varepsilon^2 \, dx$ and the Gagliardo--Nirenberg--Sobolev type estimate
\begin{align*}
\|v_\varepsilon\|_{L^4(B_R)} \leq C \|v_\varepsilon\|_{L^2(B_R)}^\frac14 \|v_\varepsilon\|_{H^1(B_R)}^\frac34 = C \|v_\varepsilon\|_{H^1(B_R)}^\frac34
\end{align*}
further implies
\begin{align*}
\Big| \int_{B_R} a_\varepsilon v_\varepsilon^2 \, dx \Big|
&\leq C \|a_\varepsilon\|_{L^2(B_R)} \|v_\varepsilon\|_{H^1(B_R)}^\frac32 \leq C \|a_\varepsilon\|_{L^2(B_R)}^4 + \frac12 \|v_\varepsilon\|_{H^1(B_R)}^2
\\&
\leq C + \frac12 \|\nabla v_\varepsilon\|_{L^2(B_R)}^2
\end{align*}
where we utilized the normalization of $v_\varepsilon$ and the boundedness of $\|a_\varepsilon\|_{L^2(B_R)}$. This results in $\| \nabla v_\varepsilon \|_{L^2(B_R)}^2 \leq \lambda_\varepsilon + C + \frac12 \|\nabla v_\varepsilon\|_{L^2(B_R)}^2$, which provides both a lower bound for $\lambda_\varepsilon$ of the form $\lambda_\varepsilon\geq -C$ and an upper bound for $\| v_\varepsilon \|_{H^1(B_R)}$. Up to a subsequence, we thus know that $\lambda_\varepsilon$ 
converges 
for $\varepsilon \rightarrow 0$.

In fact, one can show that $\lim_{\varepsilon \rightarrow 0} \lambda_\varepsilon \geq 0$ holds true. To see this, we calculate 
\begin{align*}
&\lambda_\varepsilon \int_{B_R} u v_\varepsilon \, dx = \int_{B_R} u (-\Delta + a_\varepsilon) v_\varepsilon \, dx
\\&
= - \int_{\partial B_R} u \frac{\partial v_\varepsilon}{\partial n} \, dS + \int_{B_R} \nabla u \cdot \nabla v_\varepsilon \, dx + \int_{B_R} a_\varepsilon u v_\varepsilon \, dx
\\&
= - \int_{\partial B_R} u \frac{\partial v_\varepsilon}{\partial n} \, dS + \int_{B_R} (-\Delta + a) u v_\varepsilon \, dx + \int_{B_R} (a_\varepsilon - a) u v_\varepsilon \, dx
\\&
\geq -\|a_\varepsilon - a\|_{L^2(B_R)} \|u\|_{L^\infty(B_R)}
\end{align*}
where we have employed $u \geq 0$ and $\frac{\partial v_\varepsilon}{\partial n} \leq 0$ on $\partial B_R$ as well as $(-\Delta + a)u = 0$. By arguing that $1 = \int_{B_R} v_\varepsilon^2 \, dx \leq (\int_{B_R} v_\varepsilon \, dx)^\frac12 (\int_{B_R} v_\varepsilon^3 \, dx)^\frac12 \leq C \|v_\varepsilon\|_{L^1(B_R)}^\frac12 \|v_\varepsilon\|_{H^1(B_R)}^\frac32 \leq C \|v_\varepsilon\|_{L^1(B_R)}^\frac12$, we conclude that 
\[
\int_{B_R} u v_\varepsilon \, dx \geq \inf_{B_R} u \int_{B_R} v_\varepsilon \, dx \geq c > 0.
\]
Consequently, $\lambda_\varepsilon \geq - (\int_{B_R} u v_\varepsilon \, dx)^{-1} \|a_\varepsilon - a\|_{L^2(B_R)} \|u\|_{L^\infty(B_R)} \rightarrow 0$ for $\varepsilon \rightarrow 0$.

Now choose some arbitrary $w \in H^1(\mathbb R^3)$ and a sequence $w_k \in C_c^\infty(\mathbb R^3)$, $w_k \rightarrow w$ in $H^1(\mathbb R^3)$. As
\[
\langle w, (-\Delta + a) w \rangle = \int_{\mathbb R^3} (|\nabla w|^2 + a w^2) \, dx = \lim_{k \rightarrow \infty} \int_{\mathbb R^3} (|\nabla w_k|^2 + a w_k^2) \, dx, 
\]
it suffices to verify that $\int_{\mathbb R^3} (|\nabla w_k|^2 + a w_k^2) \, dx \geq 0$ for all $k \in \mathbb N$. For fixed $k \in \mathbb N$, there exists some $R>0$ such that $\mathrm{supp} \; w_k \subset B_R(0)$, hence, $\int_{\mathbb R^3} (|\nabla w_k|^2 + a_\varepsilon w_k^2) \, dx \geq \lambda_\varepsilon \|w_k\|_{L^2(B_R(0))}^2$ for all $\varepsilon > 0$ and 
\[
\int_{\mathbb R^3} (|\nabla w_k|^2 + a w_k^2) \, dx = \lim_{\varepsilon \rightarrow 0} \int_{\mathbb R^3} (|\nabla w_k|^2 + a_\varepsilon w_k^2) \, dx \geq \|w_k\|_{L^2(B_R(0))}^2 \lim_{\varepsilon \rightarrow 0} \lambda_\varepsilon \geq 0.
\]
Finally, $\langle w, (-\Delta + a) w \rangle \geq 0$ is proven.
\end{proof}

Appropriate bounds on the solutions to the Thomas--Fermi--von Weizs\"acker equations \eqref{eqtfi} can be constructed with the help of Proposition \ref{proprn}. The proof mainly relies on arguments from \cite{CLBL98} and \cite{NO17} where corresponding estimates have been deduced in similar situations. 

\begin{proposition}
\label{proprn}
Let $M>0$ and $m = m_c + \sum_{x \in \mathbb P} c_x \delta_x$ where $m_c \in L^2_\mathrm{uloc}(\mathbb R^3)$, $m_c \geq 0$, $c_{x} > 0$ and $\mathbb P \subset \mathbb R^3$ such that $|x-y| \geq 4 \ol \rho > 0$ for all $x,y \in \mathbb P$ with $x \neq y$, and
\begin{equation}
\label{eqboundm}
\| m_c \|_{L^2_\mathrm{uloc}(\mathbb R^3)} + \sup_{x \in \mathbb R^3} \bigg( \sum_{y \in \mathbb P \cap B_1(x)} c_y^2 \bigg)^\frac12 \leq M.
\end{equation}
Then, there exists some $R_0 \geq 1$ such that for each $R_n \geq R_0$ and $m_{R_n} \coleq m \, \chi_{B_{R_n}(0)}$, there exists a solution $(u_{R_n}, \phi_{R_n}) \in H^1(\mathbb R^3) \times L^{2}_{\mathrm{uloc}}(\mathbb R^3)$, $u \geq 0$, which satisfies 
\begin{equation}
\label{eqtfrn}
\begin{cases}
-\Delta u_{R_n} + \frac53 u_{R_n}^\frac73 - \phi_{R_n} u_{R_n} = 0, \\
-\Delta \phi_{R_n} = 4\pi (m_{R_n} - u_{R_n}^2),
\end{cases}
\end{equation}
in the sense of distributions. Using the notation $\eta_\rho$ introduced in Notation \ref{noteta}, this solution satisfies the bounds
\begin{equation}
\label{eqbounds}
\begin{split}
\| u_{R_n} \|_{H^2_\mathrm{uloc}(\mathbb R^3)} &\leq C (1 + M^4), \\
\| u_{R_n} \|_{L^p_\mathrm{uloc}(\mathbb R^3)} &\leq C_p (1 + M^\frac34) \mbox{\ for all\ } 1 \leq p < 4, \\
\| \phi_{R_n} \|_{L^p_\mathrm{uloc}(\mathbb R^3)} &\leq C_p (1 + M) \mbox{\ for all\ } 1 \leq p < 3, \\
\| \phi_{R_n} \|_{W^{1,p}_\mathrm{uloc}(\mathbb R^3)} &\leq C_p \big(1 + M^\frac74 \big) \mbox{\ for all\ } 1 \leq p < \frac32, \\
\| \eta_\rho \partial_i \phi_{R_n} \|_{L^2_\mathrm{uloc}(\mathbb R^3)} &\leq C_\rho \big( 1 + M^\frac74 \big) \mbox{\ for all\ } 0 < \rho < \ol \rho, \ 1 \leq i \leq 3, \\
\| \eta_\rho \partial_{ij} \phi_{R_n} \|_{L^2_\mathrm{uloc}(\mathbb R^3)} &\leq C_\rho \big( 1 + M^\frac52 \big) \mbox{\ for all\ } 0 < \rho < \ol \rho, \ 1 \leq i,j \leq 3
\end{split}
\end{equation}
where $C, \, C_p, \, C_\rho >0$ are independent of $M$ and $R_n$.
\end{proposition}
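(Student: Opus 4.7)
\textbf{Plan for the proof of Proposition~\ref{proprn}.}

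The plan is to establish existence of $(u_{R_n}, \phi_{R_n})$ by a variational argument for the TFW energy with the compactly supported charge $m_{R_n}$, and then to derive the uniform bounds in \eqref{eqbounds} by a bootstrap on the Euler--Lagrange system that carefully separates the contribution of the regular part $m_c$ from that of the point charges $\sum_{x \in \mathbb P} c_x \delta_x$.

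For existence (Step 1), I would fix $R_n \geq 1$ and note that $m_{R_n} = m \chi_{B_{R_n}(0)}$ has finite total mass. Following the classical theory of \cite{CLBL98}, the TFW energy functional
\[
\mathcal{E}_{R_n}[u] \coleq \int_{\mathbb{R}^3} |\nabla u|^2 + u^{10/3}\,dx + \tfrac12 \iint \frac{(m_{R_n}(x) - u^2(x))(m_{R_n}(y) - u^2(y))}{|x-y|}\,dx\,dy
\]
then admits a nonnegative minimizer $u_{R_n} \in H^1(\mathbb R^3)$ subject to the neutrality constraint $\int u_{R_n}^2 = \int m_{R_n}$. Defining $\phi_{R_n} \coleq (m_{R_n} - u_{R_n}^2) * |\cdot|^{-1}$ automatically gives the Poisson equation in \eqref{eqtfrn}, and $(u_{R_n},\phi_{R_n})$ solves the first TFW equation distributionally.

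The crucial step is a uniform $L^p_\mathrm{uloc}$-control of $u_{R_n}$ (Step 2). I would follow the strategy of \cite{NO17}: rewrite the EL equation as $(-\Delta + \tfrac53 u^{4/3})u = \phi u$ and apply a De\,Giorgi--Nash--Moser iteration to the truncations $(u_{R_n} - k)_+$, estimating $\phi_{R_n}$ via the Newton representation $\phi_{R_n} = (m - u_{R_n}^2) * |\cdot|^{-1}$ --- the near-field contribution being controlled by the $L^2_\mathrm{uloc}$-bound $M$ on $m$, while the far-field Coulomb tail is tamed by exponential TFW screening (a precursor of Theorem~\ref{theorempartialijpsi}, obtained by comparing $u_{R_n}$ with the solution for shifted charges and iterating a smallness estimate). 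This yields the stated $L^p_\mathrm{uloc}$-bound on $u_{R_n}$ with polynomial dependence $1+M^{3/4}$, uniformly for $R_n \geq R_0$ with $R_0$ universal. This is the main obstacle, since $u_{R_n}$ and $\phi_{R_n}$ are coupled self-consistently and the long-range Coulomb tails of $\phi_{R_n}$ would a~priori grow with $R_n$; the TFW screening effect must be quantified carefully to rule this out, with all constants tracked polynomially in $M$.

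Once $u_{R_n}^2 - m_c \in L^2_\mathrm{uloc}$ is under control, Steps 3 and 4 are largely bookkeeping. Splitting $\phi_{R_n} = \phi_\mathrm{reg} + \sum_{x \in \mathbb P \cap B_{R_n}} c_x |\cdot - x|^{-1}$ and applying standard $L^p$-estimates for the Newton potential of $L^2_\mathrm{uloc}$-data (via a partition of unity and Hardy--Littlewood--Sobolev), one obtains the stated $L^p_\mathrm{uloc}$-bound for $p<3$ and $W^{1,p}_\mathrm{uloc}$-bound for $p<3/2$ on $\phi_{R_n}$, the thresholds $3$ and $\tfrac32$ coming precisely from the $|x|^{-1}$ and $|x|^{-2}$ singularities at the nuclei. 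For the $\eta_\rho$-weighted first and second derivatives, I would test the Poisson equation against $\eta_\rho^2 \phi_{R_n}$ and against $\eta_\rho^2 \partial_{ij}\phi_{R_n}$ (in the spirit of Lemmas~\ref{lemmaw}--\ref{lemmapsi}); since $\eta_\rho$ vanishes near each point charge and satisfies $|\nabla \eta_\rho|^2 \leq c_\eta(\rho)\eta_\rho$, the Dirac contributions drop out and interior Calder\'on--Zygmund estimates on regions disjoint from $\mathbb P$ deliver the weighted bounds. Finally, rewriting the first TFW equation as $-\Delta u_{R_n} = \phi_{R_n} u_{R_n} - \tfrac53 u_{R_n}^{7/3}$ with right-hand side in $L^2_\mathrm{uloc}$ and invoking local elliptic regularity yields the $H^2_\mathrm{uloc}$-bound on $u_{R_n}$.
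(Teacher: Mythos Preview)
Your existence step and your Steps~3--4 (the bookkeeping after $u_{R_n}$ and $\phi_{R_n}$ are controlled in $L^p_{\mathrm{uloc}}$) are broadly in line with the paper. The serious gap is in your Step~2.

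You propose to control $u_{R_n}$ first, via Moser iteration, and to tame the far-field Coulomb tail of $\phi_{R_n}$ by ``exponential TFW screening (a precursor of Theorem~\ref{theorempartialijpsi})''. This is circular: in the paper, the locality result Theorem~\ref{theorempartialijpsi} (and the lemmas leading to it) rely on the uniform bounds of Proposition~\ref{propbounds}, which in turn are obtained by passing to the limit in the bounds of Proposition~\ref{proprn}. So you cannot invoke screening to prove Proposition~\ref{proprn}. Nor is it clear how you would obtain an independent ``precursor'' screening estimate without already knowing, say, $\inf u_{R_n}>0$ and $\|\phi_{R_n}\|_{L^2_{\mathrm{uloc}}}\leq C$, both of which are outputs of this proposition and its consequences.

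The paper breaks the $u$--$\phi$ coupling in the opposite order, and by a completely different mechanism. The two key ingredients you are missing are: (i) Solovej's a priori inequalities \cite{Sol90}, namely $\tfrac{9}{10}u_{R_n}^{4/3}\leq (m_{R_n}-u_{R_n}^2)\ast|\cdot|^{-1}+\Lambda$ and $0\leq\theta_{R_n}\leq\Lambda$ with a \emph{universal} constant $\Lambda$, which immediately give the lower bound $\phi_{R_n}\geq -2\Lambda$; and (ii) the nonnegativity of the linearized operator $-\Delta+\tfrac{5}{3}u_{R_n}^{4/3}-\phi_{R_n}$ (Lemma~\ref{lemmapositiveoperator}), which yields $\tfrac{5}{3}u_{R_n}^{4/3}\ast\omega^2\geq(\phi_{R_n}\ast\omega^2-c_\omega)_+$. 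Combining these with Jensen's inequality produces a nonlinear differential inequality for the mollified potential, $-\Delta(\phi_{R_n}\ast\omega^2)+(\phi_{R_n}\ast\omega^2-c_\omega)_+^{3/2}\leq CM$, and a comparison/maximum-principle argument then gives a uniform pointwise upper bound on $\phi_{R_n}\ast\omega^2$. From there the paper upgrades to pointwise control of $\phi_{R_n}^+$ (up to explicit Coulomb singularities) via a three-way splitting into a harmonic part, a Newtonian part for $m_c$, and the explicit point-charge part, and only \emph{then} deduces the $L^p_{\mathrm{uloc}}$-bound on $u_{R_n}$ from (i). None of this uses screening; it is purely elliptic comparison together with Solovej's universal bound, and that is what makes the constants independent of $R_n$.
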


\begin{proof}
We assume from now on that $m$ is not identically zero on $\mathbb R^3$. Consequently, there exists some $R_0 \geq 1$ such that $\int_{\mathbb R^3} m_{R_n} \, dx > 0$ holds true for all $R_n \geq R_0$. According to \cite[Thm. 7.19, Thm. 7.7, Thm. 7.8]{L81}, there exists a non-negative $u_{R_n} \in H^1(\mathbb R^3)$ satisfying $\int_{\mathbb R^3} u_{R_n}^2 \, dx = \int_{\mathbb R^3} m_{R_n} \, dx$ which is a solution to 
\[
-\Delta u_{R_n} + \frac53 u_{R_n}^\frac73 - \Big( (m_{R_n} - u_{R_n}^2) \ast \frac{1}{|\cdot|} \Big) u_{R_n} = -\theta_{R_n} u_{R_n}
\]
where $\theta_{R_n} \geq 0$ is the Lagrange multiplier associated to the charge constraint $\int_{\mathbb R^3} u_{R_n}^2 \, dx = \int_{\mathbb R^3} m_{R_n} \, dx$. By introducing
\begin{equation}
\label{eqphirndef}
\phi_{R_n} \coleq \big(m_{R_n} - u_{R_n}^2 \big) \ast \frac{1}{|\cdot|} - \theta_{R_n},
\end{equation}
we arrive at the Thomas--Fermi type equations
\begin{gather}
-\Delta u_{R_n} + \frac53 u_{R_n}^\frac73 - \phi_{R_n} u_{R_n} = 0, \label{equrn} \\
-\Delta \phi_{R_n} = 4\pi \big(m_{R_n} - u_{R_n}^2\big). \label{eqphirn}
\end{gather}

Due to \cite[Thm. 7.10, Thm. 7.13]{L81}, there even exists a solution $u_{R_n} \in H^2(\mathbb R^3) \hookrightarrow C^{0,\frac12}(\mathbb R^3)$, which satisfies $u(x) \rightarrow 0$ for $|x| \rightarrow \infty$ and $u_{R_n} > 0$ on $\mathbb R^3$. Moreover, we define $m_{c,R_n} \coleq m_c \, \chi_{B_{R_n}(0)}$ and derive for any $x \in \mathbb R^3$
\begin{align*}
&\Big\| (m_{R_n} - u_{R_n}^2) \ast \frac{1}{|\cdot|} \Big\|_{L^2(B_1(x))}
\\&
= \Big\| \sum_{y \in \mathbb P \cap B_{R_n}(0)} \frac{c_y}{|\cdot - y|} + (m_{c,R_n} - u_{R_n}^2) \ast \frac{1}{|\cdot|} \Big\|_{L^2(B_1(x))} \\
&\leq \sum_{y \in \mathbb P \cap B_{R_n}(0)} c_y \Big\| \frac{1}{|\cdot|} \Big\|_{L^2(B_1(0))} + \sqrt{\frac{4\pi}{3}} \Big\| (m_{c,R_n} - u_{R_n}^2) \ast \frac{1}{|\cdot|} \Big\|_{L^\infty(\mathbb R^3)} \\
&\leq C(R_{n}) \Big( M + \| m_{c,R_n} - u_{R_n}^2 \|_{L^\frac53(\mathbb R^3)} \Big\| \frac{\chi_{B_1(0)}}{|\cdot|} \Big\|_{L^\frac52(\mathbb R^3)}
\\&\qquad\qquad\quad
+ \| m_{c,R_n} - u_{R_n}^2 \|_{L^\frac75(\mathbb R^3)} \Big\| \frac{\chi_{\mathbb R^3 \backslash B_1(0)}}{|\cdot|} \Big\|_{L^\frac72(\mathbb R^3)} \Big)
\\
&\leq C(R_{n}) \big( M + \| m_{c,R_n} \|_{L^2(B_{R_n}(0))} + \| u_{R_n} \|_{L^\frac{10}{3}(\mathbb R^3)}^2 + \| u_{R_n} \|_{L^\frac{14}{5}(\mathbb R^3)}^2 \big) \\
&\leq C(R_n) \big( M + \| u_{R_n} \|_{H^1(\mathbb R^3)}^2 \big),
\end{align*}
where we also employed the Sobolev embedding. As a consequence, we get $\phi_{R_n} \in L^2_\mathrm{uloc}(\mathbb R^3)$. Analogously,
\begin{align*}
&\Big\| (m_{R_n} - u_{R_n}^2) \ast \frac{1}{|\cdot|^2} \Big\|_{L^\frac43(B_1(x))} \\&
= \Big\| \sum_{y \in \mathbb P \cap B_{R_n}(0)} \frac{c_y}{|\cdot - y|^2} + (m_{c,R_n} - u_{R_n}^2) \ast \frac{1}{|\cdot|^2} \Big\|_{L^\frac43(B_1(x))} \\
&\leq \sum_{y \in \mathbb P \cap B_{R_n}(0)} c_y \Big\| \frac{1}{|\cdot|^2} \Big\|_{L^\frac43(B_1(0))} + C \Big\| (m_{c,R_n} - u_{R_n}^2) \ast \frac{1}{|\cdot|^2} \Big\|_{L^3(\mathbb R^3)} \\
&\leq C(R_{n}) \Big( M + \| m_{c,R_n} - u_{R_n}^2 \|_{L^\frac53(\mathbb R^3)} \Big\| \frac{\chi_{B_1(0)}}{|\cdot|^2} \Big\|_{L^\frac{15}{11}(\mathbb R^3)}
\\&\qquad\qquad\quad
+ \| m_{c,R_n} - u_{R_n}^2 \|_{L^\frac75(\mathbb R^3)} \Big\| \frac{\chi_{\mathbb R^3 \backslash B_1(0)}}{|\cdot|^2} \Big\|_{L^\frac{21}{13}(\mathbb R^3)} \Big)
\\
&\leq C(R_n) \big( M + \| u_{R_n} \|_{H^1(\mathbb R^3)}^2 \big),
\end{align*}
which implies $\nabla \phi_{R_n} \in L^\frac43_\mathrm{uloc}(\mathbb R^3)$. Moreover, we will use the fact that for any $f \in L^p(\mathbb R^3)$, $g \in L^q(\mathbb R^3)$ and dual indices $p,\,q \in (1,\infty)$, the convolution $f \ast g$ is a continuous function tending to zero at infinity (see e.g. \cite[Lemma II.25]{LS77}). From the previous calculations, we thus know that $(m_{c,R_n} - u_{R_n}^2) \ast \frac{1}{|\cdot|} \in C(\mathbb R^3)$ and $\big((m_{c,R_n} - u_{R_n}^2) \ast \frac{1}{|\cdot|} \big)(x) \rightarrow 0$ for $|x| \rightarrow \infty$. As a result, $\phi_{R_n} \in C(\mathbb R^3 \backslash (\mathbb P \cap B_{R_n}(0)))$ and $\phi_{R_n}(x) \rightarrow -\theta_{R_n}$ for $|x| \rightarrow \infty$.

A pointwise lower bound (uniform in $R_n$) for $\phi_{R_n}$ can be obtained from the inequalities \cite[Prop. 8, Cor. 9]{Sol90}
\begin{equation}
\label{eqsolovej}
\begin{split}
\frac{9}{10} u_{R_n}^\frac43 &\leq (m_{R_n} - u_{R_n}^2) \ast \frac{1}{|\cdot|} + \Lambda, \\
0 &\leq \theta_{R_n} \leq \Lambda
\end{split}
\end{equation}
where $\Lambda > 0$ is a constant independent of $M$ and $R_n$. Thus, 
\begin{equation}
\label{eqphirnlowerbound}
\phi_{R_n} \geq -2\Lambda.
\end{equation}
A pointwise upper bound for $\phi_{R_n}$ cannot hold due to the point charges, but we may follow the arguments of \cite{CLBL98, NO17} to establish upper bounds for $\phi_{R_n}$ in $L^p_\mathrm{uloc}(\mathbb R^3)$, $p < 3$, which are uniform in $R_n$.

\medskip
\paragraph{\textbf{Step 1: $L^p$-bound on $\phi_{R_n}$}}
Let $\omega \in C_c^\infty(B_1(0))$ satisfying $0 \leq \omega \leq 1$, $\omega = 1$ on $B_\frac12(0)$ and $\int_{\mathbb R^3} \omega^2 \, dx = 1$. We further define $c_\omega \coleq \int_{\mathbb R^3} |\nabla \omega|^2 \, dx$ and $\omega_x \coleq \omega(\cdot - x)$. Applying Lemma \ref{lemmapositiveoperator}, we know that the operator $L_{R_n} \coleq -\Delta + \frac53 u_{R_n}^\frac43 - \phi_{R_n}$ is non-negative. Therefore,
\[
\langle \omega_x, L_{R_n} \omega_x \rangle = \int_{\mathbb R^3} |\nabla \omega_x|^2 \, dy + \int_{\mathbb R^3} \Big( \frac53 u_{R_n}^\frac43 - \phi_{R_n} \Big) \omega_x^2 \, dy \geq 0,
\]
and hence,
\[
\frac53 u_{R_n}^\frac43 \ast \omega^2 \geq \Big( \phi_{R_n} \ast \omega^2 - c_\omega \Big)_+.
\]
We now construct a (uniform in $R_n$) pointwise upper bound for the convolution $\phi_{R_n} \ast \omega^2$. First,
\[
-\Delta \big( \phi_{R_n} \ast \omega^2 \big) = 4 \pi \big( m_{R_n} \ast \omega^2 - u_{R_n}^2 \ast \omega^2 \big)
\]
and the first term on the right hand side can be estimated by
\[
( m_{R_n} \ast \omega^2 )(x) = \int_{B_1(x)} m_{R_n}(y) \omega^2(x-y) \, dy \leq \sum_{y \in \mathbb P \cap B_1(x)} c_y + \int_{B_1(x)} m_c(y) \, dy \leq C M
\]
for all $x \in \mathbb R^3$ with a constant $C > 0$ independent of $M$ and $R_n$. By employing Jensen's inequality, we control the second term via
\begin{align*}
4 \pi (u_{R_n}^2 \ast \omega^2)(x) &\geq \bigg( \frac53 \bigg)^\frac32 \int_{\mathbb R^3} u_{R_n}^2(x-y) \omega^2(y) \, dy \\
&\geq \bigg( \frac53 \bigg)^\frac32 \bigg( \int_{\mathbb R^3} u_{R_n}^\frac43(x-y) \omega^2(y) \, dy \bigg)^\frac32 \\
&= \Big( \frac53 u_{R_n}^\frac43 \ast \omega^2 \Big)^\frac32 \geq \big( \phi_{R_n} \ast \omega^2 - c_\omega \big)_+^\frac32.
\end{align*}
We thus have
\[
-\Delta \big( \phi_{R_n} \ast \omega^2 \big) + \big( \phi_{R_n} \ast \omega^2 - c_\omega \big)_+^\frac32 \leq C_\ast M,
\]
with a constant $C_\ast > 0$. Apart from that, one can easily show that $\phi_{R_n} \ast \omega^2$ is a continuous function (see e.g. \cite[Lemma II.25]{LS77}), which satisfies --- due to \eqref{eqphirndef} --- $(\phi_{R_n} \ast \omega^2)(x) \rightarrow -\theta_{R_n} \leq 0$ for $|x| \rightarrow \infty$.

We introduce the set 
\[
S \coleq \Big\{ x \in \mathbb R^3 \ \Big| \ \phi_{R_n} \ast \omega^2 - c_\omega > 0 \Big\},
\]
which is open and bounded due to the previous calculations. Furthermore, the constant and positive function $h \coleq (C_\ast M)^\frac23$ satisfies $-\Delta h + h_+^\frac32 = C_\ast M$ on $S$, which entails
\begin{align*}
-\Delta \big( \phi_{R_n} \ast \omega^2 - c_\omega \big) + \big( \phi_{R_n} \ast \omega^2 - c_\omega \big)_+^\frac32 &\leq -\Delta h + h_+^\frac32 \mbox{\quad on \ } S, \\
\phi_{R_n} \ast \omega^2 - c_\omega = 0 &\leq h \mbox{\quad on \ } \partial S.
\end{align*}
Thanks to the maximum principle, we arrive at $\phi_{R_n} \ast \omega^2 \leq c_\omega + C_\ast^\frac23 M^\frac23$ on $S$, but trivially also on $\mathbb R^3 \backslash S$. Therefore, 
\[
\phi_{R_n} \ast \omega^2 \leq C \big(1 + M^\frac23 \big)
\]
with a constant $C>0$ independent of $M$ and $R_n$.

In the case that $\phi_{R_n} \leq 0$ on $\mathbb R^3$, we have due to \eqref{eqphirnlowerbound} the pointwise bounds $-2\Lambda \leq \phi_{R_n} \leq 0$. Otherwise, the positive part $\phi_{R_n}^+$ is not identically zero, and we shall derive appropriate $L^p$-bounds for $\phi_{R_n}^+$. We first recall that $\phi_{R_n} \in C(\mathbb R^3 \backslash (\mathbb P \cap B_{R_n}(0)))$. In particular, $\phi_{R_n}^+$ is continuous away from the set $\mathbb P \cap B_{R_n}(0)$ and
\begin{equation*}
\phi_{R_n}^+ \ast \omega^2 = \phi_{R_n}^- \ast \omega^2 + \phi_{R_n} \ast \omega^2 \leq 2 \Lambda + C \big(1 + M^\frac23 \big) \leq C \big(1 + M^\frac23 \big)
\end{equation*}
with constants $C>0$ independent of $M$ and $R_n$. Now, choose some arbitrary $x_0 \in \mathbb R^3 \backslash (\mathbb P \cap B_{R_n}(0))$ satisfying $\phi_{R_n}(x_0) > 0$. On the one hand, we obtain the bound
\begin{align}
\int_{B_\frac12(x_0)} \phi_{R_n}^+(x) \, dx &\leq \int_{\mathbb R^3} \phi_{R_n}^+(x) \omega^2(x_0 - x) \, dx \label{eqphirn+omega2} \\
&= \big( \phi_{R_n}^+ \ast \omega^2 \big)(x_0) \leq C \big(1 + M^\frac23 \big). \nonumber
\end{align}
On the other hand, we may write
\[
\int_{B_\frac12(x_0)} \phi_{R_n}^+(x) \, dx = \int_0^\frac12 \int_{\partial B_\tau(x_0)} \phi_{R_n}^+(y) \, ds(y) \, d\tau,
\]
and we immediately see that there exists some $t \in (\frac14, \frac12)$ such that 
\begin{equation}
\label{eqboundaryintphirn}
\int_{\partial B_t(x_0)} \phi_{R_n}^+(y) \, ds(y) < 8 \int_{B_\frac12(x_0)} \phi_{R_n}^+(x) \, dx.
\end{equation}

Consider the boundary related problem
\begin{align*}
-\Delta \phi^{x_0}_1 &= 0 \mbox{\quad on \ } B_t(x_0), \\
\phi_1^{x_0} &= \phi_{R_n}^+ \mbox{\quad on \ } \partial B_t(x_0),
\end{align*}
as well as the two domain related problems 
\begin{align*}
-\Delta \phi^{x_0}_2 &= 4 \pi m_c \mbox{\quad on \ } B_t(x_0), \\
\phi_2^{x_0} &= 0 \mbox{\quad on \ } \partial B_t(x_0).
\end{align*}
and
\begin{align*}
-\Delta \phi^{x_0}_3 &= 4 \pi \sum_{y \in \mathbb P \cap B_{R_n}(0)} c_y \delta_y \mbox{\quad on \ } B_t(x_0), \\
\phi_3^{x_0} &= 0 \mbox{\quad on \ } \partial B_t(x_0).
\end{align*}
Because of
\[
-\Delta \phi_{R_n}^+ \leq  (-\Delta \phi_{R_n}) \chi_{\{\phi_{R_n} > 0\}} = 4 \pi \big( m_{R_n} - u_{R_n}^2 \big) \chi_{\{\phi_{R_n} > 0\}} \leq 4 \pi m_{R_n}
\]
we may employ the maximum principle to deduce $\phi_{R_n}^+ \leq \phi_1^{x_0} + \phi_2^{x_0} + \phi_3^{x_0}$ on $B_t(x_0)$. In particular, $\phi_{R_n}^+(x_0) \leq \phi_1^{x_0}(x_0) + \phi_2^{x_0}(x_0) + \phi_3^{x_0}(x_0)$ and we shall derive bounds for the three terms on the right hand side which are independent of $R_n$.

The first bound follows from the mean value property of harmonic functions and the estimates in \eqref{eqboundaryintphirn} and \eqref{eqphirn+omega2} via
\[
\phi_1^{x_0}(x_0) = \dashint_{\partial B_t(x_0)} \phi_{R_n}^+(y) \, ds(y) < \frac{8}{|\partial B_\frac14(0)|} \int_{B_\frac12(x_0)} \phi_{R_n}^+(x) \, dx \leq C (1 + M^\frac23),
\]
where the constant $C>0$ is independent of $M$ and $R_n$. For the second problem, we proceed as in \cite{NO17} and find a solution $\phi_2^{x_0} \in H^2(B_t(x_0)) \hookrightarrow C^{0,\frac12}(\ol{B_t(x_0)})$. This yields
\begin{multline*}
\phi_2^{x_0}(x_0) \leq \| \phi_2^{x_0} \|_{C^{0,\frac12}(\ol{B_t(x_0)})} \leq C \| \phi_2^{x_0} \|_{H^2(B_t(x_0))} \\
\leq C \| m_c \|_{L^2(B_t(x_0))} \leq C \| m_c \|_{L^2_{\mathrm{uloc}}(\mathbb R^3)} \leq C M
\end{multline*}
with $C>0$ independent of $M$ and $R_n$. The bound on $\phi_3^{x_0}(x_0)$ arises from a comparison of $\phi_3^{x_0}$ with 
\[
\wh{\phi}_3^{x_0} \coleq \sum_{y \in \mathbb P \cap B_{R_n}(0) \cap B_t(x_0)} \frac{c_y}{|\cdot - \, y|}.
\]
As $-\Delta \wh \phi_3^{x_0} = 4 \pi \sum_{y \in \mathbb P \cap B_{R_n}(0) \cap B_t(x_0)} c_y \delta_y = 4 \pi \sum_{y \in \mathbb P \cap B_{R_n}(0)} c_y \delta_y = -\Delta \phi_3^{x_0}$ in $B_t(x_0)$ and $\wh \phi_3^{x_0} \geq 0 = \phi_3^{x_0}$ on $\partial B_t(x_0)$, we have $\phi_3^{x_0} \leq \wh \phi_3^{x_0}$ in $B_t(x_0)$ and, hence,
\[
\phi_3^{x_0}(x_0) \leq \sum_{y \in \mathbb P \cap B_{R_n}(0) \cap B_t(x_0)} \frac{c_y}{|x_0 - y|} = \sum_{y \in \mathbb P \cap B_{R_n}(0)} \frac{c_y}{|x_0 - y|} \chi_{B_t(y)}(x_0).
\]
Together, we arrive at
\[
\phi_{R_n}^+(x_0) \leq C(1 + M) + \sum_{y \in \mathbb P \cap B_{R_n}(0)} \frac{c_y}{|x_0 - y|} \chi_{B_t(y)}(x_0),
\]
and as $x_0 \in \mathbb R^3 \backslash (\mathbb P \cap B_{R_n}(0))$ has been chosen arbitrarily, we further obtain 
\[
\phi_{R_n}^+(x) \leq C(1 + M) + \sum_{y \in \mathbb P \cap B_{R_n}(0)} \frac{c_y}{|x - y|} \chi_{B_t(y)}
\]
a.e. in $\mathbb R^3$ where $C>0$ is a constant independent of $M$ and $R_n$. For $p \in [1,3)$, we then conclude that 
\[
\| \phi_{R_n}^+ \|_{L^p_\mathrm{uloc}(\mathbb R^3)} 
\leq C_p (1 + M)
\]
where $C_p > 0$ denotes a constant depending only on $p$. Combining this estimate with the lower bound for $\phi_{R_n}$ in \eqref{eqphirnlowerbound}, entails --- as a first step --- the desired $L^p$-bound on $\phi_{R_n}$ in \eqref{eqbounds}.

\medskip
\paragraph{\textbf{Step 2: Further bounds}}
In order to establish the bounds on $u_{R_n}$ in \eqref{eqbounds}, we first utilize \eqref{eqsolovej} to find 
\begin{equation}
\label{equrnlp}
\| u_{R_n} \|_{L^p(B_1(x_0))} = \big\| u_{R_n}^\frac43 \big\|_{L^{\frac34 p}(B_1(x_0))}^\frac34 \leq C_p \Big(1 + \big\| \phi_{R_n} \big\|_{L^{\frac34 p}(B_1(x_0))}^\frac34 \Big) \leq C_p(1 + M^\frac34)
\end{equation}
for any $p \in [1,4)$ and $x_0 \in \mathbb R^3$. We recall the definition of the cut-off function $\wt \eta_\rho : [0,\infty) \rightarrow [0,1]$ from Notation \ref{noteta} and observe that $\eta_x : \mathbb R^3 \rightarrow [0,1]$, $\eta_x \coleq 1 - \wt \eta_1(|\cdot - x|)$ is another cut-off function satisfying $\eta_x = 1$ on $\ol{B_1(x)}$ and $\eta_x = 0$ on $\mathbb R^3 \backslash B_2(x)$. As an immediate consequence, there exists some $C>0$ such that $|\nabla \eta_x|^2 \leq C$ holds true on $\mathbb R^3$. Testing \eqref{equrn} with $\eta_{x_0}^2 u_{R_n}$ gives rise to
\begin{multline*}
\int_{B_2(x_0)} \eta_{x_0}^2 |\nabla u_{R_n}|^2 \, dx + 2 \int_{B_2(x_0)} \eta_{x_0} u_{R_n} \nabla u_{R_n} \cdot \nabla \eta_{x_0} \, dx \\
= -\frac53 \int_{B_2(x_0)} \eta_{x_0}^2 u_{R_n}^\frac{10}{3} \, dx + \int_{B_2(x_0)} \eta_{x_0}^2 \phi_{R_n} u_{R_n}^2 \, dx.
\end{multline*}
Applying Young's inequality and $|\nabla \eta_{x_0}| \leq C$ to the second integral on the left hand side, an absorption argument leads one to
\begin{align}
\int_{B_1(x_0)} |\nabla u_{R_n}|^2 \, dx &\leq 2 \int_{B_2(x_0)} \phi_{R_n} u_{R_n}^2 \, dx + C \int_{B_2(x_0)} u_{R_n}^2 \, dx \label{eqh1boundurn} \\
&\leq 2 \| \phi_{R_n} \|_{L^\frac52(B_2(x_0))} \| u_{R_n} \|^2_{L^\frac{10}{3}(B_2(x_0))} + C \| u_{R_n} \|^2_{L^2(B_2(x_0))} \nonumber \\
&\stackrel{\eqref{equrnlp}}{\leq} C(1 + M)(1 + M^\frac32) + C(1 + M^\frac32) \leq C(1 + M^\frac52). \nonumber
\end{align}
Now, consider the equation 
\[
-\Delta u_{R_n} = -\frac53 u_{R_n}^\frac73 + \phi_{R_n} u_{R_n}.
\]
As the right hand side belongs to $L^\frac74(B_2(x_0))$ (which will be detailed immediately), a standard result (see e.g. \cite[Theorem 8.17]{GT01}) ensures the following norm estimates on $B_2(x_0)$:
\begin{align*}
&\| u_{R_n} \|_{L^\infty(B_1(x_0))}
\\
&\leq C \bigg( \| u_{R_n} \|_{L^2(B_2(x_0))} + \Big\| -\frac53 u_{R_n}^\frac73 + \phi_{R_n} u_{R_n} \Big\|_{L^\frac74 (B_2(x_0))} \bigg)  \\
&\leq C \Big( \| u_{R_n} \|_{L^2(B_2(x_0))} + \| u_{R_n} \|_{L^\frac{49}{12}(B_2(x_0))}^\frac73 + \| \phi_{R_n} \|_{L^\frac{21}{8}(B_2(x_0))} \| u_{R_n} \|_{L^\frac{21}{4}(B_2(x_0))} \Big) \\
&\leq C \Big( \| u_{R_n} \|_{L^2(B_2(x_0))} + \| u_{R_n} \|_{H^1(B_2(x_0))}^\frac73 + \| \phi_{R_n} \|_{L^\frac{21}{8}(B_2(x_0))} \| u_{R_n} \|_{H^1(B_2(x_0))} \Big) \\
&\leq C \big( (1 + M^\frac34) + (1 + M^\frac{5}{4})^\frac73 + (1 + M)(1 + M^\frac54) \big) \\
&\leq C (1 + M^\frac{35}{12})
\end{align*}
where $C>0$ denotes various constants independent of $M$ and $R_n$. As a consequence, $\| u_{R_n} \|_{L^\infty(\mathbb R^3)} \leq C (1 + M^\frac{35}{12})$ and $\frac53 u_{R_n}^\frac73 - \phi_{R_n} u_{R_n} \in L^2_\mathrm{uloc}(\mathbb R^3)$. By applying standard elliptic regularity theory to \eqref{equrn} we thus conclude that 
\begin{align}
&\| u_{R_n} \|_{H^2(B_1(x_0))} \leq C \Big( \Big\| \frac53 u_{R_n}^\frac73 - \phi_{R_n} u_{R_n} \Big\|_{L^2(B_2(x_0))} + \| u_{R_n} \|_{H^1(B_2(x_0))} \Big) \nonumber \\
&\ \leq C \Big( \| u_{R_n} \|_{H^1(B_2(x_0))}^\frac73 + \| \phi_{R_n} \|_{L^2(B_2(x_0))} \| u_{R_n} \|_{L^\infty(\mathbb R^3)} + \| u_{R_n} \|_{H^1(B_2(x_0))} \Big) \nonumber \\
&\ \leq C \big( (1 + M^\frac{5}{4})^\frac73 + (1 + M)(1 + M^\frac{35}{12}) + (1 + M^\frac54) \big) \nonumber \\
&\ \leq C (1 + M^\frac{47}{12}). \nonumber
\end{align}

For establishing the remaining bounds on $\phi_{R_n}$, we split
\[
\phi_{R_n} = \sum_{y \in \mathbb P \cap B_{R_n}(0)} \frac{c_y}{|\cdot - \, y|} \omega(\cdot - y) + \phi_{R_n}^c
\]
where all singularities of $\phi_{R_n}$ are collected within the first term. The cut-off function $\omega \in C_c^\infty(\mathbb R^3)$ satisfying $\omega = 1$ on $B_1(0)$ and $\omega = 0$ on $\mathbb R^3 \backslash B_2(0)$ enforces each contribution from the Coulomb potential to have finite range. The non-singular function $\phi_{R_n}^c$ is then subject to
\begin{equation}
\label{eqlaplacephirnc}
-\Delta \phi_{R_n}^c = 4 \pi (m^c_{R_n} + u_{R_n}^2) + \sum_{y \in \mathbb P \cap B_{R_n}(0)} c_y \Big( -2 \frac{\cdot - y}{|\cdot - \, y|^3} \cdot \nabla \omega (\cdot - y) + \frac{1}{|\cdot - \, y|} \Delta \omega (\cdot - y) \Big).
\end{equation}
Testing this equation with $\omega(\cdot - y) \phi_{R_n}^c$ on $B_2(x_0)$ for some arbitrary $x_0 \in \mathbb R^3$ entails 
\begin{align*}
&\int_{B_1(x_0)} |\nabla \phi_{R_n}^c|^2 \, dx \leq \int_{B_2(x_0)} \omega(\cdot - x_0) |\nabla \phi_{R_n}^c|^2 \, dx \\
&= - \int_{B_2(x_0)} \Delta \phi_{R_n}^c \omega(\cdot - x_0) \phi_{R_n}^c \, dx - \int_{B_2(x_0)} \nabla \omega(\cdot - x_0) \cdot \phi_{R_n}^c \nabla \phi_{R_n}^c \, dx \\
&\leq C \int_{B_2(x_0)} \bigg( (|m^c_{R_n}| + u_{R_n}^2) |\phi_{R_n}^c| + |\nabla \omega(\cdot - x_0)| |\phi_{R_n}^c| |\nabla \phi_{R_n}^c| \\
&+ \sum_{y \in \mathbb P \cap B_{R_n}(0)} c_y \Big( \frac{1}{|\cdot - \, y|^2} |\nabla \omega (\cdot - y)| + \frac{1}{|\cdot - \, y|} |\Delta \omega (\cdot - y)| \Big) |\phi_{R_n}^c| \bigg) dx.
\end{align*}
As the expression inside the brackets in the last line is bounded by a constant which only depends on the choice of $\omega$, we deduce
\begin{align*}
&\int_{B_1(x_0)} |\nabla \phi_{R_n}^c|^2 \, dx \\
&\quad \leq C \Big( (\|m^c_{R_n}\|_{L^2_\mathrm{uloc}(\mathbb R^3)} + \|u_{R_n}\|_{H^1_\mathrm{uloc}(\mathbb R^3)}^2) \|\phi_{R_n}^c\|_{L^2_\mathrm{uloc}(\mathbb R^3)} \\
&\quad + \frac{1}{4\gamma} \|\phi_{R_n}^c\|_{L^2_\mathrm{uloc}(\mathbb R^3)}^2 + \gamma \|\nabla \phi_{R_n}^c\|_{L^2_\mathrm{uloc}(\mathbb R^3)}^2 + \sum_{y \in \mathbb P \cap B_{2}(x_0)} c_y \|\phi_{R_n}^c\|_{L^1_\mathrm{uloc}(\mathbb R^3)} \Big)
\end{align*}
where $\gamma > 0$ will be chosen subsequently. Besides, we observe that
\begin{align*}
\|\phi_{R_n}^c\|_{L^2_\mathrm{uloc}(\mathbb R^3)} &\leq \|\phi_{R_n}\|_{L^2_\mathrm{uloc}(\mathbb R^3)} + \sup_{x \in \mathbb R^3} \sum_{y \in \mathbb P \cap B_{2}(x)} c_y \Big\| \frac{\omega(\cdot - y)}{|\cdot - \, y|}\Big\|_{L^2(B_1(x))} \\
&\leq C(1 + M) + \sup_{x \in \mathbb R^3} \sum_{y \in \mathbb P \cap B_3(x)} c_y \, C \leq C(1 + M).
\end{align*}
For $\gamma > 0$ sufficiently small and together with \eqref{eqh1boundurn}, we arrive at
\begin{align*}
&\int_{B_1(x_0)} |\nabla \phi_{R_n}^c|^2 \, dx \\
&\quad \leq C \Big( (1 + M^\frac52)(1 + M) + (1 + M^2) + M(1 + M) \Big) + \frac12 \| \nabla \phi_{R_n}^c \|_{L^2_\mathrm{uloc}(\mathbb R^3)}^2,
\end{align*}
and, hence,
\begin{equation}
\label{eqgradphirnc}
\| \nabla \phi_{R_n}^c \|_{L^2_\mathrm{uloc}(\mathbb R^3)} \leq C \big( 1 + M^\frac74 \big).
\end{equation}
A similar estimate can be derived for the second order derivatives $\partial_{ij} \phi_{R_n}^c$, $1 \leq i,j \leq 3$. To this end, we first note that 
\begin{align*}
&-\int_{B_2(x_0)} \Delta \phi_{R_n}^c \omega \Delta \phi_{R_n}^c \, dx = \int_{B_2(x_0)} \nabla \phi_{R_n}^c \cdot \nabla \omega \, \Delta \phi_{R_n}^c \, dx \\
&\qquad \qquad - \sum_{i,j} \int_{B_2(x_0)} \partial_i \phi_{R_n}^c \, \partial_j \omega \, \partial_{ij} \phi_{R_n}^c \, dx - \sum_{i,j} \int_{B_2(x_0)} \omega |\partial_{ij} \phi_{R_n}^c|^2 \, dx.
\end{align*}
This enables one to estimate
\begin{align*}
&\sum_{i,j} \int_{B_1(x_0)} |\partial_{ij} \phi_{R_n}^c|^2 \, dx \\
&\qquad \leq C \int_{B_2(x_0)} \Big( |\nabla \phi_{R_n}^c|^2 + |\Delta \phi_{R_n}^c|^2 \Big) dx + \sum_{i,j} \int_{B_2(x_0)} |\partial_i \phi_{R_n}^c| \, |\partial_j \omega| \, |\partial_{ij} \phi_{R_n}^c| \, dx.
\end{align*}
From \eqref{eqlaplacephirnc} and by arguing as above, we know that
\[
\int_{B_2(x_0)} |\Delta \phi_{R_n}^c|^2 \, dx \leq C \Big( \int_{B_2(x_0)} \big( (m_{R_n}^c)^2 + u_{R_n}^4 \big) \, dx + \sum_{y \in \mathbb P \cap B_{3}(x_0)} c_y^2 \Big)
\]
where the constant $C > 0$ only depends on the choice of $\omega$. 
Consequently,
\begin{align*}
&\sum_{i,j} \int_{B_1(x_0)} |\partial_{ij} \phi_{R_n}^c|^2 \, dx \leq C \bigg( \| \nabla \phi_{R_n}^c \|_{L^2_\mathrm{uloc}(\mathbb R^3)}^2 + \| m_{R_n}^c \|_{L^2_\mathrm{uloc}(\mathbb R^3)}^2 \\
&\qquad \qquad + \| u_{R_n} \|_{H^1_\mathrm{uloc}(\mathbb R^3)}^4 + \sum_{y \in \mathbb P \cap B_{3}(x_0)} c_y^2 \bigg) + \frac12 \sum_{i,j} \| \partial_{ij} \phi_{R_n}^c \|_{L^2_\mathrm{uloc}(\mathbb R^3)}^2.
\end{align*}
Using \eqref{eqh1boundurn} and \eqref{eqgradphirnc}, we now get for all $1 \leq i,j \leq 3$ the bound
\begin{equation}
\label{eqd2phirnc}
\| \partial_{ij} \phi_{R_n}^c \|_{L^2_\mathrm{uloc}(\mathbb R^3)} \leq C \big( 1 + M^\frac52 \big).
\end{equation}

By an elementary calculation with $p \in [1, \frac32)$, one easily obtains 
\begin{align*}
&\int_{B_1(x_0)} \bigg| \nabla \sum_{y \in \mathbb P \cap B_{R_n}(0)} \frac{c_y}{|\cdot - \, y|} \omega(\cdot - y) \bigg|^p dx \\
&\qquad\leq C_p \sum_{y \in \mathbb P \cap B_{3}(x_0)} c_y^p \int_{B_1(x_0)} \Big( \frac{1}{|\cdot - \, y|^{2p}} + \frac{1}{|\cdot - \, y|^{p}} \Big) dx \leq C_p \sum_{y \in \mathbb P \cap B_{3}(x_0)} c_y^p
\end{align*}
with a constant $C_p > 0$. This gives rise to 
\[
\bigg\| \nabla \sum_{y \in \mathbb P \cap B_{R_n}(0)} \frac{c_y}{|\cdot - \, y|} \omega(\cdot - y) \bigg\|_{L^p_\mathrm{uloc}(\mathbb R^3)} \leq C_p \, M
\]
for $1<p<\frac{3}{2}$
and --- by taking into account \eqref{eqgradphirnc} ---
\[
\| \nabla \phi_{R_n} \|_{L^p_\mathrm{uloc}(\mathbb R^3)} \leq C_p \big( 1 + M^\frac74 \big).
\]
Similarly, we deduce
\begin{align*}
&\int_{B_1(x_0)} \eta_\rho^2 |\partial_i \phi_{R_n}|^2 \, dx \\
&\quad\leq C \sum_{y \in \mathbb P \cap B_{3}(0)} c_y^2 \int_{B_1(x_0)} \eta_\rho^2 \Big( \frac{1}{|\cdot - \, y|^{4}} + \frac{1}{|\cdot - \, y|^{2}} \Big) dx + C \int_{B_1(x_0)} \eta_\rho^2 |\partial_i \phi_{R_n}^c|^2 \, dx \\
&\quad\leq C_\rho M^2 + C \big(1 + M^\frac72 \big) \leq C_\rho \big(1 + M^\frac72 \big)
\end{align*}
by employing \eqref{eqgradphirnc}. This yields
\[
\| \eta_\rho \partial_i \phi_{R_n} \|_{L^2_\mathrm{uloc}(\mathbb R^3)} \leq C_\rho \big(1 + M^\frac74 \big).
\]
An analogous argumentation using \eqref{eqd2phirnc} finally leads to
\[
\| \eta_\rho \partial_{ij} \phi_{R_n} \|_{L^2_\mathrm{uloc}(\mathbb R^3)} \leq C_\rho \big(1 + M^\frac52 \big).
\]
This finishes the proof.
\end{proof}

\begin{proposition}
\label{propbounds}
Let $m = m_c + \sum_{x \in \mathbb P} c_x \delta_x$ be a charge distribution subject to assumption (A1).
Then, there exists a solution $(u, \phi) \in H^1_\mathrm{uloc}(\mathbb R^3) \times L^2_\mathrm{uloc}(\mathbb R^3)$, $u \geq 0$, to
\begin{equation}
\label{eqtf}
\begin{cases}
-\Delta u + \frac53 u^\frac73 - \phi u = 0, \\
-\Delta \phi = 4\pi (m - u^2),
\end{cases}
\end{equation}
in the distributional sense. Furthermore, using the cutoff $\eta_\rho$ introduced in Notation~\ref{noteta} this solution satisfies the bounds
\begin{equation*}
\begin{split}
\| u \|_{H^2_\mathrm{uloc}(\mathbb R^3)} &\leq C (1 + M^4), \\
\| u \|_{L^p_\mathrm{uloc}(\mathbb R^3)} &\leq C_p (1 + M^\frac34) \mbox{\ for all\ } 1 \leq p < 4, \\
\| \phi \|_{L^p_\mathrm{uloc}(\mathbb R^3)} &\leq C_p (1 + M) \mbox{\ for all\ } 1 \leq p < 3, \\
\| \phi \|_{W^{1,p}_\mathrm{uloc}(\mathbb R^3)} &\leq C_p \big(1 + M^\frac74 \big) \mbox{\ for all\ } 1 \leq p < \frac32, \\
\| \eta_\rho \partial_i \phi \|_{L^2_\mathrm{uloc}(\mathbb R^3)} &\leq C_\rho \big( 1 + M^\frac74 \big) \mbox{\ for all\ } 0 < \rho < \ol \rho, \ 1 \leq i \leq 3, \\
\| \eta_\rho \partial_{ij} \phi \|_{L^2_\mathrm{uloc}(\mathbb R^3)} &\leq C_\rho \big( 1 + M^\frac52 \big) \mbox{\ for all\ } 0 < \rho < \ol \rho, \ 1 \leq i,j \leq 3
\end{split}
\end{equation*}
where $C, \, C_p, \, C_\rho >0$ are independent of $M$.
\end{proposition}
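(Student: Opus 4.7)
The strategy is to obtain $(u,\phi)$ as a subsequential limit of the approximate solutions $(u_{R_n},\phi_{R_n})$ from Proposition~\ref{proprn}, taking $R_n\to\infty$ and $m_{R_n}=m\chi_{B_{R_n}(0)}$. This is the natural approach because the estimates \eqref{eqbounds} depend only on $M$ and $\rho$ from assumption (A1), not on $R_n$. Once we extract a limit, the claimed bounds will follow from weak (or strong) lower semicontinuity of the relevant norms. Note that assumption (A1) is stronger than \eqref{eqboundm} (it additionally provides the averaged lower bound $\omega_0$), but the statement of Proposition~\ref{proprn} and its bounds only use the local finiteness encoded in \eqref{eqboundm}, so it applies here without change.

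For the extraction, I would fix an exhaustion $\{B_k\}_{k\in\mathbb{N}}$ of $\mathbb{R}^3$ by balls and use a diagonal argument. By the uniform $H^2_\mathrm{uloc}$-bound on $u_{R_n}$, after extracting a subsequence we obtain $u_{R_n}\rightharpoonup u$ weakly in $H^2(B_k)$ and, by Rellich compactness, strongly in $H^1(B_k)$ for every $k$. The embedding $H^2(B_k)\hookrightarrow C(\overline{B_k})$ in three dimensions then gives a uniform $L^\infty$-bound on each $B_k$ and, by interpolation with strong $L^2$-convergence, strong convergence $u_{R_n}\to u$ in $L^p(B_k)$ for every $p\in[1,\infty)$. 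For $\phi_{R_n}$, the uniform $L^2_\mathrm{uloc}$-bound (together with the finer $W^{1,p}_\mathrm{uloc}$-bound for $p<3/2$ and the weighted estimates $\eta_\rho\nabla\phi_{R_n},\,\eta_\rho\partial_{ij}\phi_{R_n}\in L^2_\mathrm{uloc}$) yields, after a further diagonal extraction, a weak limit $\phi\in L^2_\mathrm{uloc}(\mathbb{R}^3)$ with $\phi_{R_n}\rightharpoonup\phi$ weakly in $L^2(B_k)$ and in $W^{1,p}(B_k)$ for $p<3/2$, and $\eta_\rho\phi_{R_n}\rightharpoonup\eta_\rho\phi$ weakly in $H^2(B_k)$ for each $\rho\in(0,\overline\rho)$. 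By weak lower semicontinuity, $(u,\phi)$ then satisfies all the uniform-local bounds listed in Proposition~\ref{propbounds}, and $u\geq 0$ is inherited from $u_{R_n}\geq 0$ via strong $L^1_\mathrm{loc}$-convergence.

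It remains to verify that the limit pair solves \eqref{eqtf} in the distributional sense. Fix $\varphi\in C_c^\infty(\mathbb{R}^3)$ and choose $n$ large enough that $\supp\varphi\subset B_{R_n}(0)$, so that $m_{R_n}\equiv m$ on $\supp\varphi$. For the Poisson equation, weak $L^2_\mathrm{loc}$-convergence of $\phi_{R_n}$ and strong $L^1_\mathrm{loc}$-convergence of $u_{R_n}^2$ (consequence of the uniform $L^\infty_\mathrm{loc}$-bound and strong $L^2_\mathrm{loc}$-convergence of $u_{R_n}$) allow us to pass to the limit. For the Schr\"odinger-type equation, $-\Delta u_{R_n}\rightharpoonup -\Delta u$ distributionally, and $u_{R_n}^{7/3}\to u^{7/3}$ strongly in $L^p_\mathrm{loc}$ for every $p<\infty$ by the uniform $L^\infty_\mathrm{loc}$-bound together with Egorov/dominated convergence. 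The single delicate term is the product $\phi_{R_n}u_{R_n}$, which I would handle by combining weak $L^2_\mathrm{loc}$-convergence of $\phi_{R_n}$ with strong $L^2_\mathrm{loc}$-convergence of $u_{R_n}$ to conclude $\phi_{R_n}u_{R_n}\rightharpoonup\phi u$ weakly in $L^1_\mathrm{loc}$, which is sufficient for testing against $\varphi$. The main obstacle is precisely this weak-times-strong argument, since the Coulomb singularities of $\phi_{R_n}$ at points of $\mathbb{P}$ preclude any uniform $L^\infty$-bound; but the $H^2_\mathrm{uloc}$-regularity of $u_{R_n}$, inherited uniformly from Proposition~\ref{proprn}, provides precisely the compactness needed to identify the limit of the product and close the argument.
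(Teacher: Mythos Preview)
Your proposal is correct and follows essentially the same route as the paper: extract a subsequential limit of the approximations $(u_{R_n},\phi_{R_n})$ from Proposition~\ref{proprn} via a diagonal argument and local compactness, inherit the uniform bounds by weak lower semicontinuity, and pass to the limit in the distributional formulation using the strong $L^p_{\mathrm{loc}}$-convergence of $u_{R_n}$ against the weak convergence of $\phi_{R_n}$. The only cosmetic difference is that for the weighted second-order bounds the paper works directly on the open sets $\operatorname{int}\{\eta_{\rho/2}=1\}$ (where $\phi_{R_n}$ itself is bounded in $H^2$) rather than with the product $\eta_\rho\phi_{R_n}$; both organizations yield the same conclusion.
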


\begin{proof}
This proposition can be proven along the same lines of arguments as a similar statement in \cite{NO17}. We first set $R_n \coleq R_0 + n$ for $n \in \mathbb N$ in Proposition \ref{proprn} and obtain bounded sequences $u_{R_n} \in H^2_\mathrm{uloc}(\mathbb R^3)$ and $\phi_{R_n} \in W^{1,p}_\mathrm{uloc}(\mathbb R^3)$, $p \in (1,\frac32)$.
By a diagonal sequence argument, we get subsequences $u_{R_n} \geq 0$ weakly converging in $H^2(B_R(0))$ to some $u \in H^2_\mathrm{loc}(\mathbb R^3)$ and $\phi_{R_n}$ weakly converging in $W^{1,p}(B_R(0))$ to some $\phi \in W^{1,p}_\mathrm{loc}(\mathbb R^3)$ for all $R>0$ and $p \in (1, \frac32)$.

Let $x_0 \in \mathbb R^3$. We then have $u_{R_n} \rightharpoonup u$ in $H^2(B_1(x_0))$ and $\phi_{R_n} \rightharpoonup \phi$ in $W^{1,p}(B_1(x_0))$ for all $p \in [1,\frac32)$; in particular, one derives $u_{R_n} \rightarrow u$ in $L^q(B_1(x_0))$ for all $q \in[1, 4)$ and $\phi_{R_n} \rightarrow \phi$ in $L^{r}(B_1(x_0))$ for all $r \in [1,3)$. The corresponding bounds on $u$ and $\phi$ are now an immediate consequence of the bounds on $u_{R_n}$ and $\phi_{R_n}$ in \eqref{eqbounds}.

The $H^2$-type bound on $\phi$  on the set $\mathbb R^3 \backslash \mathbb P$ can be deduced by a similar reasoning. We start
by observing that
\[
\| \partial_i \phi_{R_n} \|_{L^2(B_R(0) \cap \mathrm{int} \{ \eta_\rho = 1\})}
\leq \| \eta_\rho \partial_i \phi_{R_n} \|_{L^2(B_R(0))} \leq C_\rho R^\frac32 \big(1 + M^\frac74 \big),
\]
for all $1 \leq i,j \leq 3$, $R > 0$, and $0 < \rho < \ol \rho$ due to the bounds in \eqref{eqbounds}. 
By selecting a diagonal sequence $\phi_{R_n}$, we find that $\phi_{R_n}$ weakly converges to $\phi$ in $H^1(B_R(0) \cap \mathrm{int} \{ \eta_{\rho/2} = 1\})$ for all $R > 0$ and $0 < \rho < \ol \rho$. This fact gives rise to
\[
\| \eta_\rho \partial_i \phi \|_{L^2(B_1(x_0))} \leq \| \partial_i \phi \|_{L^2(B_1(x_0) \cap \mathrm{int} \{ \eta_\frac{\rho}{2} = 1\})} \leq C_\frac{\rho}{2} \big(1 + M^\frac74 \big)
\]
(and an analogous bound on $\eta_\rho \partial_{ij} \phi$) for all $x_0 \in \mathbb R^3$, $0 < \rho < \ol \rho$, and $1 \leq i,j \leq 3$.

We subsequently rewrite \eqref{eqtfrn}
in the distributional formulation. For all $v \in C_c^\infty(\mathbb R^3)$, we have
\[
\int_{\mathbb{R}^3} \Big( -u_{R_n} \Delta v + \frac53 u_{R_n}^\frac73 v - \phi_{R_n} u_{R_n} v \Big) \, dx = 0
\]
and
\[
-\int_{\mathbb R^3} \phi_{R_n} \Delta v \, dx = 4\pi \bigg( \sum_{x \in \mathbb P \cap B_{R_n}(0)} c_x v(x) + \int_{\mathbb R^3} \big(m_{c,R_n} - u_{R_n}^2 \big) v \, dx \bigg).
\]
Due to the convergence properties of $u_{R_n}$ and $\phi_{R_n}$ derived above, these equations converge to the corresponding distributional formulation of \eqref{eqtf} for $n \rightarrow \infty$.
\end{proof}

Note that in the literature sometimes a condition equivalent to (A1) is used.
\begin{remark}
\label{reminfboundm}
We now give an equivalent characterization of the $\inf$-condition for charge distributions $m$ in (A1), which also appears in \cite{CLBL98}. An analogous statement without Dirac measures has been proven in \cite{NO17}. But as the result only appeals to the mass, the proof is the same.

Let $m = m_c + \sum_{y \in \mathbb P} c_y \delta_y$ where $m_c \in L^2_\mathrm{uloc}(\mathbb R^3)$, $m_c \geq 0$, $c_{y} > 0$ and $\mathbb P \subset \mathbb R^3$ such that $|x-y| \geq 4\rho$ for all $x, y \in \mathbb P$, $x \neq y$, for some $\rho>0$. Then, the following statements are equivalent.
\begin{enumerate}
	\item[(i)] $\begin{aligned} \inf_{x \in \mathbb R^3} \Big( \int_{B_R(x)} m_c \, dy + \sum_{y \in \mathbb P \cap B_R(x)} c_y \Big) \geq \omega_0 R^3 \ \mbox{for all} \ R \geq \omega_0^{-1} \end{aligned}$ \\[1ex]
	\item[(ii)] $\begin{aligned} \lim_{R \rightarrow \infty} \inf_{x \in \mathbb R^3} \frac{1}{R} \Big( \int_{B_R(x)} m_c \, dy + \sum_{y \in \mathbb P \cap B_R(x)} c_y \Big) = \infty \end{aligned}$
\end{enumerate}
\end{remark}

\begin{theorem}
\label{theoreminf}
Let the charge distribution $m$ satisfy (A1). Then, there exists a unique solution $(u, \phi) \in H^1_\mathrm{uloc}(\mathbb R^3) \times L^2_\mathrm{uloc}(\mathbb R^3)$, $u \geq 0$, to
\begin{equation*}
\begin{cases}
-\Delta u + \frac53 u^\frac73 - \phi u = 0, \\
-\Delta \phi = 4\pi (m - u^2),
\end{cases}
\end{equation*}
in the distributional sense. This solution $(u,\phi)$ satisfies the bounds established in Proposition \ref{propbounds} as well as 
\[
\inf_{x \in \mathbb R^3} u(x) \geq c
\]
where $c>0$ only depends on $\rho$, $M$, and $\omega_0$.
\end{theorem}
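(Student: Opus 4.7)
The plan is the following: existence of a pair $(u,\phi) \in H^1_\mathrm{uloc}(\mathbb R^3) \times L^2_\mathrm{uloc}(\mathbb R^3)$ with $u \geq 0$ and all the quantitative bounds is already supplied by Proposition~\ref{propbounds}, and the same arguments used there apply to any weak solution in $H^1_\mathrm{uloc} \times L^2_\mathrm{uloc}$, so every such solution satisfies those bounds. The remaining tasks are the pointwise lower bound $\inf u \geq c > 0$ and uniqueness. I will first establish the lower bound for any solution, and then derive uniqueness from Theorem~\ref{theorempartialijpsi}, which becomes applicable once we know $\inf u > 0$.

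For the lower bound, I would proceed in two steps. \emph{Step~1: from an averaged lower bound on $m$ to one on $u^2$.} Fix a nonnegative $\chi \in C^\infty_c(B_1)$ with $\int \chi = 1$ and $\chi \geq c_0 \chi_{B_{1/2}}$ for some $c_0>0$, and for $x_0 \in \mathbb R^3$, $R \geq 2/\omega_0$ introduce the scaled bump $\chi_{R,x_0}(x) \coleq R^{-3}\chi((x-x_0)/R) \in C_c^\infty(B_R(x_0))$. Testing the Poisson equation $-\Delta\phi = 4\pi(m-u^2)$ against $\chi_{R,x_0}$ yields
\begin{equation*}
\int_{\mathbb R^3} u^2\chi_{R,x_0}\,dx = \int_{\mathbb R^3} m\,\chi_{R,x_0}\,dx + \tfrac{1}{4\pi}\int_{\mathbb R^3} \phi\,\Delta \chi_{R,x_0}\,dx.
\end{equation*}
H\"older's inequality together with the $L^p_\mathrm{uloc}$ bound on $\phi$ from Proposition~\ref{propbounds} (for any fixed $p \in (1,3)$) and the scalings $\|\Delta \chi_{R,x_0}\|_{L^{p'}(\mathbb R^3)} = R^{-5+3/p'}\|\Delta\chi\|_{L^{p'}}$ and $\|\phi\|_{L^p(B_R(x_0))} \leq CR^{3/p}\|\phi\|_{L^p_\mathrm{uloc}}$ bounds the error term by $CR^{-2}$ uniformly in $x_0$. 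The hypothesis $\chi \geq c_0\chi_{B_{1/2}}$ combined with the averaged lower bound from (A1) gives $\int m\chi_{R,x_0}\,dx \geq c_1\omega_0$, so that for $R$ chosen sufficiently large (depending only on $\rho, M, \omega_0$) we obtain $\int u^2\chi_{R,x_0}\,dx \geq c_2 > 0$. Since $\int \chi_{R,x_0}\,dx = 1$ and $u$ is continuous (from $H^2_\mathrm{uloc} \hookrightarrow C^{0,1/2}_\mathrm{loc}$), this produces some $y \in \overline{B_R(x_0)}$ with $u(y) \geq \sqrt{c_2}$.

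\emph{Step~2: Harnack's inequality.} I would rewrite the first TFW equation as $-\Delta u + qu = 0$ with $q \coleq \tfrac53 u^{4/3} - \phi$. The $L^\infty$ bound on $u$ and the $L^p_\mathrm{uloc}$ bound on $\phi$ for $p \in (3/2, 3)$ from Proposition~\ref{propbounds} yield $q \in L^p_\mathrm{uloc}(\mathbb R^3)$ with norm controlled by $\rho, M, \omega_0$. Classical Harnack's inequality for nonnegative solutions of linear Schr\"odinger-type equations with such coefficients (see, e.g., \cite{GT01}) then provides a constant $C > 0$ depending only on $R$ and $\|q\|_{L^p_\mathrm{uloc}}$ such that $\sup_{B_R(x_0)} u \leq C \inf_{B_R(x_0)} u$. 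Combining with Step~1 gives $u(x_0) \geq C^{-1} u(y) \geq C^{-1}\sqrt{c_2} =: c(\rho,M,\omega_0) > 0$, which is the desired pointwise bound. For uniqueness, given two such solutions $(u_i,\phi_i)$ (both with $\inf u_i > 0$ by the above), I would apply Theorem~\ref{theorempartialijpsi} with $m_1 = m_2 = m$: then $\delta m_c \equiv 0$, the index set $\mathbb P'$ of differing Dirac weights is empty, the cutoff $\eta$ from Notation~\ref{noteta} is identically $1$, and $\{\eta < 1\} = \emptyset$, so the estimate forces $u_1 \equiv u_2$ and $\phi_1 \equiv \phi_2$. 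The hard part will be Step~1: since the averaged lower bound on $m$ from (A1) only holds at (possibly large) fixed scales $R \geq \omega_0^{-1}$, one cannot use H\"older continuity to pass directly from an average to a point value, and Harnack's inequality is essential to bridge this gap.
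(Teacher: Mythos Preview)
Your argument for the lower bound is correct and uses the same two ingredients as the paper---testing the Poisson equation against a scaled bump and Harnack's inequality---only arranged directly rather than by contradiction. The paper assumes $\inf_{m}\inf_x u = 0$, produces a sequence with $u_n(x_n)\le 1/n$, spreads this via Harnack to $u_n\to 0$ on balls $B_R(x_n)$, and then obtains a contradiction with the averaged lower bound on $m$ by testing $-\Delta\phi_n = 4\pi(m_n-u_n^2)$ against a cutoff. Your Steps~1--2 are the constructive version of the same computation, and the scaling estimates you give for the error term are right.

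The uniqueness part, however, has a dependency problem you have not addressed. In the paper's logical order, Theorem~\ref{theorempartialijpsi} (and the lemmas feeding into it) are stated under Assumption~\ref{assumplocality}, which already invokes Theorem~\ref{theoreminf} both for the lower bound $\inf u_i>0$ and for the phrase ``the unique weak solution''; the paper instead obtains uniqueness by citing \cite[Theorem~6.10]{CLBL98}. Your route via Theorem~\ref{theorempartialijpsi} can be made non-circular, but it requires two things you only assert. First, you claim that ``the same arguments'' from Proposition~\ref{propbounds} apply to an arbitrary weak solution in $H^1_\mathrm{uloc}\times L^2_\mathrm{uloc}$. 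They do not as written: the proof of Proposition~\ref{proprn} uses the Solovej inequalities \eqref{eqsolovej} and the representation $\phi_{R_n}=(m_{R_n}-u_{R_n}^2)*|\cdot|^{-1}-\theta_{R_n}$, which are properties of the \emph{variational} approximants, not of arbitrary weak solutions. For an arbitrary pair $(u_2,\phi_2)\in H^1_\mathrm{uloc}\times L^2_\mathrm{uloc}$ you would instead need an independent elliptic bootstrap to get $u_2\in L^\infty$ (with a bound that may depend on $\|u_2\|_{H^1_\mathrm{uloc}}$ and $\|\phi_2\|_{L^2_\mathrm{uloc}}$, which is enough for uniqueness), and then rerun your Steps~1--2 to obtain $\inf u_2>0$. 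Second, you must observe explicitly that the \emph{proofs} of Lemmas~\ref{lemmaw} and~\ref{lemmapsi} use only the $L^\infty$ bound on $u_i$, the $L^2_\mathrm{uloc}$ bound on $\phi_i$, and the strict positivity $\inf u_i>0$---never uniqueness---so that the estimate of Theorem~\ref{theorempartialijpsi} applies to any two solutions with these properties. Once both points are checked, your conclusion $w\equiv 0$, $\psi\equiv 0$ from $\delta m=0$, $\mathbb P'=\emptyset$, $\eta\equiv 1$ is valid.
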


\begin{proof}
The existence of a corresponding solution has already been proven in Proposition \ref{propbounds}, whereas the uniqueness follows from the general existence and uniqueness result in \cite[Theorem 6.10]{CLBL98}. The assumptions in this theorem are satisfied due to (A1) and Remark \ref{reminfboundm}.

As in \cite{NO17}, we assume that 
\[
\inf_{m \text{ subject to } (A1)} ~~\inf_{x \in \mathbb R^3} u(x) = 0
\]
and show that this assumption leads to a contradiction. We choose a sequence of charges $m_n$ satisfying (A1) and $x_n \in \mathbb R^3$ such that the solution $(u_n, \phi_n)$ fulfills
\[
u_n(x_n) \leq \frac1n.
\]
Using the bounds on $u_n$ and $\phi_n$ from Proposition \ref{propbounds}, we estimate
\[
\Big\| \frac53 u_n^\frac43 - \phi_n \Big\|_{L^2_\mathrm{uloc}(\mathbb R^3)} \leq \frac53 \| u_n \|_{L^\frac83_\mathrm{uloc}(\mathbb R^3)}^\frac43 + \| \phi_n \|_{L^2_\mathrm{uloc}(\mathbb R^3)} \leq C (1 + M).
\]
From Harnack's inequality \cite[Corollary 5.2]{Tru73} and the uniform bound on the coefficient of the operator $-\Delta + \frac53 u_n^\frac43 - \phi_n$, we obtain a constant $C > 0$ depending only on $M$ and $R$ such that 
\begin{equation}
\label{eqharnack}
\sup_{x \in B_R(x_n)} u_n(x) \leq C \inf_{x \in B_R(x_n)} u_n(x) \leq \frac{C}{n}
\end{equation}
for all $R>0$. The shifted functions $u_n( \cdot + x_n)$, thus, converge uniformly to zero on $B_R(0)$, while the potential $\phi_n$ solves 
\begin{equation}
\label{eqpotn}
-\Delta \phi_n = 4\pi (m_n - u_n^2)
\end{equation}
in the sense of distributions.

We now choose a cut-off function $\omega \in C_c^\infty(\mathbb R^3)$ subject to $0 \leq \omega \leq 1$, $\omega = 1$ on $B_\frac{1}{2}(0)$, and $\omega = 0$ on $\mathbb R^3 \backslash B_1(0)$. By testing \eqref{eqpotn} with $\omega(\frac{\cdot - x_n}{R})$, we derive
\begin{align*}
&4 \pi \int_{B_R(x_n)} m_n(x) \, \omega\Big(\frac{x - x_n}{R}\Big) \, dx \\
&\qquad \qquad = 4 \pi \int_{B_R(x_n)} u_n^2 \omega\Big(\frac{x - x_n}{R}\Big) \, dx - \frac{1}{R^2} \int_{B_R(x_n)} \phi_n \Delta \omega\Big(\frac{x - x_n}{R}\Big) \, dx.
\end{align*}
As a consequence of (A1) and the bound on $\phi_n$ from Proposition~\ref{propbounds}, we may now estimate
\[
c R^3 \leq \int_{B_\frac{R}{2}(x_n)} m_n(x) \, dx \leq \int_{B_R(x_n)} u_n^2(x) \, dx + CR (1 + M)
\]
with positive constants $c$ and $C$ independent of $M$ and $R \geq 2\omega_0^{-1}$. However, if we first choose $R\geq 2\omega_0^{-1}$ solving $cR^3 \geq 1 + CR(1+M)$, and then $n \in \mathbb N$ such that $\int_{B_R(x_n)} u_n^2(x) \, dx < 1$ holds true (according to \eqref{eqharnack}), we arrive at a contradiction.
\end{proof}

\bibliographystyle{abbrv}
\bibliography{TFWModel}

\begin{thebibliography}{10}

\bibitem{BalzaniBrandsSchroederCarstensen2010}
D.~Balzani, D.~Brands, J.~Schr{\"o}der, and C.~Carstensen.
\newblock Sensitivity analysis of statistical measures for the reconstruction
  of microstructures based on the minimization of generalized least-square
  functionals.
\newblock {\em Technische Mechanik}, 30:297--315, 2010.

\bibitem{BalzaniSchroeder2009}
D.~Balzani and J.~Schr{\"o}der.
\newblock Some basic ideas for the reconstruction of statistically similar
  microstructures for multiscale simulations.
\newblock {\em PAMM}, 8(1):10533--10534, 2009.

\bibitem{BlancLeBrisLions}
X.~Blanc, C.~Le~Bris, and P.-L. Lions.
\newblock From molecular models to continuum mechanics.
\newblock {\em Arch. Ration. Mech. Anal.}, 164(4):341--381, 2002.

\bibitem{BlancLeBrisLionsAtomisticToContinuum}
X.~Blanc, C.~Le~Bris, and P.-L. Lions.
\newblock Atomistic to continuum limits for computational materials science.
\newblock {\em M2AN Math. Model. Numer. Anal.}, 41(2):391--426, 2007.

\bibitem{BlancLeBrisLionsStochasticLattice}
X.~Blanc, C.~Le~Bris, and P.-L. Lions.
\newblock The energy of some microscopic stochastic lattices.
\newblock {\em Arch. Ration. Mech. Anal.}, 184(2):303--339, 2007.

\bibitem{BlancLewin}
X.~Blanc and M.~Lewin.
\newblock The crystallization conjecture: a review.
\newblock {\em EMS Surv. Math. Sci.}, 2(2):225--306, 2015.

\bibitem{CLBL98}
I.~Catto, C.~Le~Bris, and P.-L. Lions.
\newblock {\em {The mathematical theory of thermodynamic limits: Thomas--Fermi
  type models}}.
\newblock {Oxford Mathematical Monographs}. The Clarendon Press, Oxford
  University Press, New York, 1998.

\bibitem{ContiDolzmannKirchheimMueller}
S.~Conti, G.~Dolzmann, B.~Kirchheim, and S.~M\"{u}ller.
\newblock Sufficient conditions for the validity of the {C}auchy-{B}orn rule
  close to {${\rm SO}(n)$}.
\newblock {\em J. Eur. Math. Soc. (JEMS)}, 8(3):515--530, 2006.

\bibitem{FagoHayesCarterOrtiz}
M.~Fago, R.~L. Hayes, E.~A. Carter, and M.~Ortiz.
\newblock Density-functional-theory-based local quasicontinuum method:
  Prediction of dislocation nucleation.
\newblock {\em Phys. Rev. B}, 70:100102, 2004.

\bibitem{FischerMultilevelLocalDependence}
J.~Fischer.
\newblock Quantitative normal approximation for sums of random variables with
  multilevel local dependence.
\newblock {\em Preprint}, 2018.
\newblock arXiv:1905.10273.

\bibitem{FischerVarianceReduction}
J.~Fischer.
\newblock The choice of representative volumes in the approximation of
  effective properties of random materials.
\newblock {\em to appear in Arch. Ration. Mech. Anal.}, 2019.
\newblock arXiv:1807.00834.

\bibitem{FrieseckeTheil}
G.~Friesecke and F.~Theil.
\newblock Validity and failure of the {C}auchy-{B}orn hypothesis in a
  two-dimensional mass-spring lattice.
\newblock {\em J. Nonlinear Sci.}, 12(5):445--478, 2002.

\bibitem{GT01}
D.~Gilbarg and N.~S. Trudinger.
\newblock {\em {Elliptic partial differential equations of second order}}.
\newblock Springer-Verlag, Berlin -- Heidelberg, 2001.

\bibitem{HohenbergKohn}
P.~Hohenberg and W.~Kohn.
\newblock Inhomogeneous electron gas.
\newblock {\em Phys. Rev. (2)}, 136:B864--B871, 1964.

\bibitem{KohnSham}
W.~Kohn and L.~J. Sham.
\newblock Self-consistent equations including exchange and correlation effects.
\newblock {\em Phys. Rev. (2)}, 140:A1133--A1138, 1965.

\bibitem{LLM16}
C.~Le~Bris, F.~Legoll, and W.~Minvielle.
\newblock {Special quasirandom structures: a selection approach for stochastic
  homogenization}.
\newblock {\em Monte Carlo Methods Appl.}, 22(1):25--54, 2016.

\bibitem{L81}
E.~H. Lieb.
\newblock {Thomas--Fermi and related theories of atoms and molecules}.
\newblock {\em Rev. Mod. Phys.}, 53:603--641, 1981.

\bibitem{LS77}
E.~H. Lieb and B.~Simon.
\newblock {The Thomas--Fermi theory of atoms, molecules and solids}.
\newblock {\em Adv. Math.}, 23:22--116, 1977.

\bibitem{LuTadmorKaxiras}
G.~Lu, E.~B. Tadmor, and E.~Kaxiras.
\newblock From electrons to finite elements: A concurrent multiscale approach
  for metals.
\newblock {\em Phys. Rev. B}, 73:024108, 2006.

\bibitem{NO17}
F.~Q. Nazar and C.~Ortner.
\newblock {Locality of the Thomas--Fermi--von Weizs\"acker equations}.
\newblock {\em Arch. Ration. Mech. Anal.}, 224:817--870, 2017.

\bibitem{ParrYang}
R.~G. Parr and W.~Yang.
\newblock Density-functional theory of the electronic structure of molecules.
\newblock {\em {A}nnual {R}eview of {P}hysical {C}hemistry}, 46(1):701--728,
  1995.

\bibitem{DislocationCore}
D.~Rodney, L.~Ventelon, E.~Clouet, L.~Pizzagalli, and F.~Willaime.
\newblock Ab initio modeling of dislocation core properties in metals and
  semiconductors.
\newblock {\em Acta Materialia}, 124:633 -- 659, 2017.

\bibitem{Schroeder2011}
J.~Schr{\"o}der, D.~Balzani, and D.~Brands.
\newblock Approximation of random microstructures by periodic statistically
  similar representative volume elements based on lineal-path functions.
\newblock {\em Archive of Applied Mechanics}, 81(7):975--997, Jul 2011.

\bibitem{Sol90}
J.~Solovej.
\newblock {Universality in the Thomas--Fermi--von Weizs\"acker model of atoms
  and molecules}.
\newblock {\em Commun. Math. Phys.}, 129:561--598, 1990.

\bibitem{SuryanarayanaBhattacharyaOrtiz}
P.~Suryanarayana, K.~Bhattacharya, and M.~Ortiz.
\newblock Coarse-graining {K}ohn-{S}ham density functional theory.
\newblock {\em Journal of the Mechanics and Physics of Solids}, 61(1):38 -- 60,
  2013.

\bibitem{Tru73}
N.~S. Trudinger.
\newblock {Linear elliptic operators with measurable coefficients}.
\newblock {\em Ann. Scuola Norm. -- Sci.}, 27:265--308, 1973.

\bibitem{vPDFN10}
J.~von Pezold, A.~Dick, M.~Fri\a'{a}k, and J.~Neugebauer.
\newblock {Generation and performance of special quasirandom structures for
  studying the elastic properties of random alloys: Application to Al-Ti}.
\newblock {\em Phys. Rev. B}, 81:094203, 2010.

\bibitem{WFBZ90}
S.-H. Wei, L.~G. Ferreira, J.~E. Bernard, and A.~Zunger.
\newblock {Electronic properties of random alloys: Special quasirandom
  structures}.
\newblock {\em Phys. Rev. B}, 42:9622--9649, Nov 1990.

\bibitem{ZWFB90}
A.~Zunger, S.-H. Wei, L.~G. Ferreira, and J.~E. Bernard.
\newblock {Special quasirandom structures}.
\newblock {\em Phys. Rev. Lett.}, 65:353--356, Jul 1990.

\end{thebibliography}

\end{document}